\definecolor{yellow1}{rgb}{1,0.8,0.2} 
\theoremstyle{plain}
 \newtheorem{thm}{Theorem}[section]
\newtheorem{lem}[thm]{Lemma}
\newtheorem{que}[thm]{Question}
\newtheorem{cor}[thm]{Corollary}
\newtheorem{pro}[thm]{Proposition}
\newtheorem{rmk}[thm]{Remark}
\newtheorem{defi}[thm]{Definition}
\newcommand{\bess}{\begin{eqnarray*}}
\newcommand{\eess}{\end{eqnarray*}}
\begin{document}

\author{\textsc{Xiaoguang Wang}}
\address{School of Mathematical Sciences, Zhejiang University, Hangzhou, 310027, China}
\email{wxg688@163.com}

\author{\textsc{Yongcheng Yin}}
 \address{School of Mathematical Sciences, Zhejiang University, Hangzhou, 310027, China}
 \email{yin@zju.edu.cn}






   \title[]{Global topology of hyperbolic components I:   Cantor circle case}


   \begin{abstract} The   hyperbolic components  in the moduli space ${M}_d$  of  degree $d\geq2$ rational maps are 
    mysterious and  fundamental topological objects. For those in the connectedness locus, they are known to be the finite quotients of 
    the Euclidean space $\mathbb{R}^{4d-4}$. In this paper,  we 
  study the  hyperbolic components in  the disconnectedness locus and with minimal complexity: those in the Cantor circle locus.
   We show that each of them  is a  finite quotient of the space $\mathbb{R}^{4d-4-n}\times\mathbb{T}^{n}$,
  where $n$ is determined by the dynamics. The proof relates Riemann surface theory (Abel's Theorem), dynamical system  and algebraic topology.
   \end{abstract}

   \subjclass[2010]{Primary 37F45; Secondary 37F10, 37F15}

   \keywords{global topology, hyperbolic component, moduli space}



   \date{\today}


   \maketitle

\section{Introduction and main theorem} \label{intro}

This is the first of a series of papers which will be devoted to a study of the global topology of the hyperbolic components in the {\it disconnectedness locus}, in the moduli space  of rational maps. The 
 problem  has various challenging cases and lies in the crossroad of many subjects: Riemann surface theory, dynamical systems, algebraic topology, etc.  It is beyond the authors' ability to treat all the cases in one single paper, so a series of papers will fit the project. In the current paper, we will deal with the hyperbolic components with `minimal' complexity: those in the Cantor circle locus.
  We will illustrate how different subjects 
 interact in this  situation. Our treatment in this case sheds lights on the strategy to deal with the general case.
 
 To set the stage,
let's begin with some basic definitions and  motivations.

\

Let ${\rm Rat}_d$ be the space of  rational maps $f: \widehat{\mathbb{C}}\rightarrow  \widehat{\mathbb{C}}$ of degree $d\geq 2$.  This space is  naturally  parameterized as  
$${\rm Rat}_d\simeq \mathbb{P}^{2d+1}(\mathbb{C})\setminus V({\rm Res}),$$
where $V ({\rm Res})$ is the hypersurface of  the irreducible   pairs $(P, Q)$ defining $f=P/Q$, for which the resultant vanishes.  The moduli space 
$$M_d={\rm Rat}_d/ {\rm PSL}(2, \mathbb{C})$$ is   ${\rm Rat}_d$ modulo the action by conjugation of the group ${\rm PSL}(2, \mathbb{C})$ of M\"obius transformations. There is a natural projection $\pi: {\rm Rat}_d\rightarrow {M}_d$ sending a rational map $f$ to its M\"obius conjugate class $\langle f \rangle$. The topology  on $M_d$ is the quotient topology induced by $\pi$. A point $\langle f \rangle \in M_d$ is also called a map if no confusion arises.


 A  rational  map $f$ is  {\it hyperbolic}  if all critical points are attracted, under iterations,   to the attracting cycles of $f$.   
  We say  that $\langle f \rangle$ is {\it hyperbolic} if  $f$ is hyperbolic. It's known that in the moduli space $M_d$, the set $M_d^{hyp}$  of all  hyperbolic maps 
 is open, and  conjecturally  dense.  
 A connected component of $M_d^{hyp}$ is called a {\it hyperbolic component}.  It is shown by DeMarco \cite{D} that  every hyperbolic  component (in any holomorphic family parameterized by a complex manifold, implying that in $M_d$) is a domain of holomorphy.
 Even though, the shapes of general hyperbolic components  are  still mysterious.
 Therefore,  a fundamental problem naturally arises:
 
 \begin{que}\label{shape} What is the global topology  of the hyperbolic component?
\end{que} 

 
 An extremal  case is that the hyperbolic components are in the  {\it connectedness locus}, 
the collection of all  maps whose Julia sets are connected. Working within the  polynomial moduli space of degree $d$,  Milnor \cite{M} shows that every  hyperbolic component  in the connectedness locus is diffeomorphic to the topological  cell $\mathbb{R}^{2d-2}$.
Milnor's result has a slightly different statement when applying to the rational moduli space $M_d$.
Due to the possible symmetries of rational maps, the  hyperbolic components in the connectedness locus in $M_d$ are 
 actually the finite quotients of $\mathbb{R}^{4d-4}$, compare \cite[Section 9]{M}. In particular, 
 in the quadratic case, the topology of the hyperbolic components  was described by Rees \cite{R}.

  For the hyperbolic components  in the {\it disconnectedness locus}, consisting of the maps for which the  Julia sets are  disconnected,    Makienko \cite{Ma} showed that each of them is unbounded in the moduli space $M_d$.  Further studies of these components were previously focused on the polynomial {\it shift locus}, see for example 
 \cite{BDK, DM, DP1, DP2, DP3}.
 In the rational case, however,  very little is known about their  global topology.

Our main purpose   is to understand the global topology of these hyperbolic components.
In this paper,  we consider the hyperbolic components in the  {\it Cantor circle locus} (subset of the disconnectedness locus), consisting of maps $\langle f\rangle$ for which the Julia set $J(f)$ is  homeomorphic to the Cartesian product of  a Cantor set 
 $\mathbf{C}$  and  the circle ${S}^1=\mathbb{R}/\mathbb{Z}$:
 $$J(f)\simeq \mathbf{C}\times {S}^1 ,$$
 here a {\it Cantor set} $\mathbf{C}$ is a  totally disconnected  perfect set  in $\mathbb{R}$. 
These hyperbolic components have the `minimal' complexity among all those in the  disconnectedness locus, in the sense that in the dynamical space, each Fatou component of a representative map is either simply connected or doubly connected.
This  non-trivial case naturally serves as the starting point of our exploration.
In fact, our study in this case  involves ideas and techniques from Riemann surface theory (e.g. Abel's Theorem), dynamical systems (e.g. deformations and combinations of rational maps) and algebraic topology.  This enlightens the way to understand the general case, as will be illustrated in the forthcoming papers.

Our main result  characterizes  the global topology of these components:


\begin{thm}\label{cantor-circle-locus}  Every  hyperbolic component  $ \mathcal{H}\subset M_d$ in the Cantor circle locus  is a finite quotient of $\mathbb{R}^{4d-4-n}\times\mathbb{T}^{n}$, where $n$ is the number of the critical annular Fatou components of a representative map in $\mathcal{H}$. 
%
 \end{thm}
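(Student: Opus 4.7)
The plan is to realize $\mathcal{H}$ as the image of an explicit covering map
$\Phi\colon \mathbb{R}^{4d-4-n}\times\mathbb{T}^{n}\to \mathcal{H}$,
whose deck group will turn out to be the finite group $\mathrm{Aut}(f_0)$ of M\"obius symmetries of a chosen base map $\langle f_0\rangle \in \mathcal{H}$. The first step is to set up dynamical coordinates that are constant along $\mathcal{H}$: the periods and multipliers of the attracting cycles, the finite combinatorial tree whose vertices are orbits of Fatou components and whose edges record the covering degrees between them, and the positions of the $2d-2$ critical points along this tree. In the Cantor-circle case every periodic Fatou component is a disk and every annular component is strictly pre-periodic. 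The periodic disks carry canonical Koenigs/B\"ottcher coordinates, giving complex numbers for any critical values they contain; each annular component carries only its conformal modulus together with the location of any critical point it contains.

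I would then define $\Phi$ by inverting this data. The Euclidean factor $\mathbb{R}^{4d-4-n}$ accumulates (i) the complex positions, in linearizing coordinates, of the critical values in the attracting disk cycles, (ii) the conformal moduli of the annular Fatou components, and (iii) the \emph{radial} positions of critical points lying in critical annular components. The torus factor $\mathbb{T}^{n}$ collects, for each of the $n$ critical annular components, the \emph{angular} coordinate of its critical data with respect to the core curve: a full $2\pi$-twist along this core curve is a grafting deformation that closes up to an isomorphic marked dynamical system, so the angular parameter lives on $S^{1}$ rather than on $\mathbb{R}$. To prove that any such choice of data actually extends to a rational map in $\mathcal{H}$, I would present each attracting cycle as a quotient Riemann surface $U/f^{p}$ (a torus or a punctured disk, according to whether the cycle is geometrically attracting or super-attracting) and regard the prescribed critical values as a divisor on it. By \emph{Abel's theorem}, such a divisor is the divisor of a meromorphic function of the prescribed degree iff it vanishes in the Jacobian of the quotient; this linear obstruction is what cuts out the realizable parameters, and I would solve it explicitly in terms of our coordinates.

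Finally I would show that $\Phi$ is a proper local diffeomorphism whose generic fibre has cardinality $|\mathrm{Aut}(f_0)|$, which yields the stated finite-quotient description. Local injectivity (up to the natural action of $\mathrm{Aut}(f_0)$) follows from the standard holomorphic-motions/quasiconformal-rigidity argument. For properness, compactness of $\mathbb{T}^{n}$ means that a parameter escape must occur in $\mathbb{R}^{4d-4-n}$; any such escape forces a conformal modulus to $0$ or $\infty$, or pushes a critical point to the boundary of its Koenigs/B\"ottcher disk, either of which drives the corresponding map to $\partial\mathcal{H}$ in $M_d$. The main technical obstacle is the realization step: the Jacobian/divisor equations produced by Abel's theorem are coupled across all attracting cycles and across all layers of the tree of annular components, and the crucial nontrivial output is their invariance under a full turn of each angular parameter at a critical annulus. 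This is precisely what promotes those directions from $\mathbb{R}$ to $S^{1}$ and produces the $\mathbb{T}^{n}$ factor of Theorem~\ref{cantor-circle-locus}.
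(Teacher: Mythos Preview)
Your proposal has a genuine gap at the realization step: the dynamical invariants you list --- linearizing coordinates of critical values in the disk components, moduli of the critical annular components $D_k$, and radial/angular positions of the critical data there --- amount precisely to the \emph{model map} $\boldsymbol{m}=(m_0,m_\infty,m_1,\dots,m_n)$, i.e.\ the conformal conjugacy class of $f$ restricted to the critical Fatou components. This data does \emph{not} determine $\langle f\rangle$ up to $\mathrm{Aut}(f_0)$. Between consecutive critical Fatou components sit the annuli $A_1,\dots,A_{n+1}$ (the components of $f^{-1}(A)$, which are not Fatou components), and the gluing of the model pieces along these $A_k$ carries extra twist freedom invisible to your coordinates. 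The paper computes this explicitly (Theorem~\ref{covering0}): for a generic model $\boldsymbol{m}$ there are exactly
\[
N^{*}=\Big(1-\sum_{k=1}^{n+1}\tfrac{1}{d_k}\Big)\,\mathrm{lcm}(d_1,\dots,d_{n+1})
\]
pairwise non-conjugate marked rational maps realizing it. So your map $\Phi$ is not well-defined as a single-valued function, and your ``local injectivity up to $\mathrm{Aut}(f_0)$'' via qc-rigidity fails: two hyperbolic maps with identical model data but differing by a Dehn twist along some $A_k$ are qc-conjugate on the Fatou set yet globally non-conjugate. The grafting you invoke is a twist in the critical annulus $D_k$, which \emph{does} move the model data (it is the source of the $S^1$ factor in each $\boldsymbol{A}(e,\delta)$); the twists along the $A_k$ are a different, transverse, phenomenon.

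The paper handles this by going the other direction: it defines $\rho\colon \mathcal{F}^{\chi_0}_{\sigma,\boldsymbol{d}}\to\mathbf{M}_\sigma$ from marked rational maps to model data, proves $\rho$ is a covering of degree $N^{*}$ (surjectivity and the degree count both require the Cui--Tan Thurston-type theorem, not just Abel's theorem on a quotient torus), and then uses a fundamental-group argument to show each component $\mathcal{F}$ is nevertheless homeomorphic to $\mathbb{R}^{4d-4-n}\times\mathbb{T}^n$. The finite quotient in Theorem~\ref{cantor-circle-locus} is the projection $\mathcal{F}\to\mathcal{H}$, whose degree comes from the boundary-marking and normalization choices (Lemmas~\ref{cov1} and~\ref{bcov2}), and is in general branched where $\mathrm{Aut}(f)$ is nontrivial --- so it is not a covering with deck group $\mathrm{Aut}(f_0)$ either.
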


We remark that  the Cantor circle locus is empty when $d\leq 4$, therefore Theorem \ref{cantor-circle-locus} implicitly 
requires that $d\geq 5$. 
It is also necessary to point out that even in the case that the quotient map  is  a covering map, 
the  hyperbolic component $\mathcal{H}$  is not necessarily homeomorphic to $\mathbb{R}^{4d-4-n}\times\mathbb{T}^{n}$.
In fact, 
classifying the spaces finitely covered by  $\mathbb{T}^n$ (or the more delicate case $\mathbb{R}^{4d-4-n}\times \mathbb{T}^n$) is an important subject in topology,  and is related to the theory of  crystallographic  groups and
   Hilbert's eighteenth problem \cite{M2}.   This is beyond the scope of the paper. The readers may refer to  \cite{B1,B2,Ch,F,FG,FH,Hi} and Section \ref{appendix}.

%
%
%







The main point in the proof of Theorem \ref{cantor-circle-locus} is to show that a marked version $\mathcal{F}=\mathcal{F}(\mathcal{H})$ of $\mathcal{H}$ is homeomorphic to $\mathbb{R}^{4d-4-n}\times\mathbb{T}^{n}$ (as is shown in Section \ref{top_marked}).
This will be built on two key steps.

Key \textbf{Step 1}    is to characterize the  proper holomorphic maps from the annulus to the disk, and the space of all such maps.  To this end, fix a  number $r\in(0,1)$,  let $\mathbb{D}=\{z\in \mathbb{C}; |z|<1\}$,  $\mathbb{A}_r=\{z\in \mathbb{C};  r<|z|<1\}$ and $C_r=\{z\in \mathbb{C}; |z|=r\}$ be the inner boundary of $\mathbb{A}_r$.  Let $p_1, p_2, \cdots,  p_e$ be $e\geq 2$ points in 
$\mathbb{A}_r$, not necessarily distinct, and let $\delta\in[1,e)$ be an integer.
 Our main result in this step is the following:
  
 \begin{thm} \label{properAD}
 There is a  proper holomorphic map   $f: \mathbb{A}_r \rightarrow \mathbb{D}$ of degree 
$e$ with $f^{-1}(0)=\{p_1, \cdots, p_e\}$ and ${\deg}(f|_{C_r})=\delta$  if and only if 
$$|p_1p_2\cdots p_e|=r^{\delta}.$$ 
When exists, the proper map $f$ is unique if we further require that $f(1)=1$. In this case, $f$ can be written uniquely as
$$f(z)= {z^{-\delta}} B_{0}(z)\prod_{j\geq 1} \Big(B_{j}(z)B_{-j}(z)\Big),  \  z\in  \mathbb{A}_r ,$$
where 
$$B_{j}(z)= \Bigg(\prod_{k=1}^e\frac{1-\overline{p_k}r^{2j}}{1-{p_k} r^{2j}}\Bigg) \Bigg(\prod_{k=1}^e  \frac{z-p_k r^{2j}}{1-\overline{p_k} r^{2j}z} \Bigg), \ j\in\mathbb{Z}.$$
Moreover, the space of all proper holomorphic maps  $f: \mathbb{A}_r \rightarrow \mathbb{D}$ of degree $e$ with  ${\deg}(f|_{C_r})=\delta$, 
$f(1)=1$, and with $r$ ranging over all numbers in $(0,1)$,  is homeomorphic to $\mathbb{R}^{2e-1}\times S^1$.
\end{thm}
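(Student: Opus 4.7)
First I would prove the necessity of $|p_1\cdots p_e|=r^{\delta}$ by a Jensen-type computation on the annulus. Setting $m(s)=\frac{1}{2\pi}\int_0^{2\pi}\log|f(se^{i\theta})|\,d\theta$ for $s\in(r,1)$, properness of $f$ forces $m(r^+)=m(1^-)=0$, while the Cauchy--Riemann equations identify $s\,m'(s)$ with the winding number $W(s)$ of $f|_{\{|z|=s\}}$ around $0$. This $W(s)$ is piecewise constant with jumps of $+1$ at each modulus $|p_k|$, and $W(r^+)=-\delta$, $W(1^-)=e-\delta$ by the boundary degree data. Integrating $W(s)/s$ over $(r,1)$ and summing by parts yields $0=\delta\log r-\log|p_1\cdots p_e|$, which is the annular incarnation of Abel's theorem.

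Next, for existence I would verify the stated infinite product directly. Convergence on compact subsets of $\mathbb{A}_r$ reduces to the estimate $B_j(z)B_{-j}(z)=1+O(r^{2j})$, which follows from the asymptotics $B_j(z)\to z^e$ and $B_{-j}(z)\to z^{-e}$ as $j\to\infty$. Only $B_0$ contributes zeros in $\mathbb{A}_r$, namely the prescribed $p_k$, since for $j\ne 0$ the zeros $p_k r^{2j}$ and poles $1/(\overline{p_k}r^{2j})$ of $B_j$ lie outside $\overline{\mathbb{A}_r}$. On $C_1$ each M\"obius factor $\frac{z-q}{1-\overline{q}z}$ is unimodular, every normalising constant $\frac{1-\overline{p_k}r^{2j}}{1-p_kr^{2j}}$ is unimodular (numerator and denominator being complex conjugates), and $|z^{-\delta}|=1$, so $|f|\equiv1$ on $C_1$. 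The more delicate point is $|f|\equiv1$ on $C_r$: here the Schwarz reflection $\sigma(z)=r^2/\overline{z}$ combined with the identity $|r^2-\overline{q}z|=r|z-q|$ valid on $|z|=r$ shows, after regrouping factor by factor, that the contribution $r^{-\delta}$ from $z^{-\delta}$ is precisely cancelled once the hypothesis $|p_1\cdots p_e|=r^{\delta}$ is invoked. Zero counting then gives $\deg f=e$ and $\deg(f|_{C_r})=\delta$.

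For uniqueness, if $f_1,f_2$ are two normalised solutions with the same zeros, the ratio $g=f_1/f_2$ is holomorphic and non-vanishing on $\mathbb{A}_r$ with $|g|\equiv1$ on $\partial\mathbb{A}_r$. Iterated Schwarz reflection across $C_1$ and $C_r$ extends $g$ to a nowhere-vanishing holomorphic function $\widetilde g$ on $\mathbb{C}^{*}$ satisfying $\widetilde g(r^2 z)=\widetilde g(z)$, and hence descends to a holomorphic map from the compact complex torus $\mathbb{C}^{*}/\langle z\mapsto r^2 z\rangle$ into $\mathbb{C}^{*}$. The maximum principle applied to the harmonic function $\log|\widetilde g|$ forces $|\widetilde g|$ to be constant, whence $\widetilde g$ itself is constant by the open mapping theorem; $g(1)=1$ then finishes the argument.

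Finally, the space $\mathcal{F}$ of all such normalised proper maps (with $r$ varying) is in bijection with $\{(r,\{p_k\}) : r\in(0,1),\ \{p_k\}\subset\mathbb{A}_r\text{ unordered},\ |p_1\cdots p_e|=r^{\delta}\}$. Writing $p_k=r^{a_k}e^{i\theta_k}$ with $a_k\in(0,1)$ and $\theta_k\in\mathbb{R}/2\pi\mathbb{Z}$ turns the modulus condition into $\sum_k a_k=\delta$, and the map $\Phi(f)=\arg\prod_k p_k\in S^1$ is a smooth submersion $\mathcal{F}\to S^1$. The hard part, which I expect to be the main obstacle, is to prove that each fibre of $\Phi$ is homeomorphic to $\mathbb{R}^{2e-1}$ and that $\Phi$ admits a continuous global section. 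The section is supplied by the standard symmetric configuration $p_k^{\mathrm{std}}=r^{\delta/e}e^{i(\beta/e+2\pi k/e)}$, depending continuously on $(r,\beta)\in(0,1)\times S^1$. For the fibre, an explicit $S_e$-equivariant deformation retraction onto the standard configuration, modelled on the $e=2$ case where the involution $(u,\phi)\mapsto(-u,-\phi)$ on an open cylinder quotients to an open half-disc $\cong\mathbb{R}^2$, yields fibrewise coordinates identifying each fibre with $\mathbb{R}^{2e-1}$ and hence $\mathcal{F}\cong\mathbb{R}^{2e-1}\times S^1$.
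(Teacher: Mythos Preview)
Your arguments for necessity, the explicit product, and uniqueness are correct and close in spirit to the paper's. The paper derives the condition $|p_1\cdots p_e|=r^\delta$ from Abel's theorem on the Schottky double (your Jensen computation is the direct annular version of this), proves $|f_\infty|\equiv 1$ on $C_r$ via the modularity $f_\infty(r^2z)=f_\infty(z)$ together with the reflection $f_\infty(1/\bar z)=1/\overline{f_\infty(z)}$ (which is what your ``regrouping factor by factor'' would amount to once the index shift is tracked), and obtains uniqueness by passing to the compact torus exactly as you do.

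The gap is in the topology of $\boldsymbol{A}(e,\delta)$. You correctly flag this as the hard part, but your proposal does not resolve it. The fibre of your map $\Phi$ is a subspace of a symmetric product, and asserting an ``$S_e$-equivariant deformation retraction onto the standard configuration, modelled on the $e=2$ case'' is not a construction: for $e\ge 3$ there is no evident way to push an unordered $e$-tuple toward a single symmetric configuration continuously in the quotient, and the $e=2$ picture (an involution on a cylinder) has no obvious analogue. Even granting contractible fibres and your section, a fibre bundle over $S^1$ with fibre $\mathbb{R}^{2e-1}$ is classified by its monodromy, which you have not shown to be trivial. The paper sidesteps all of this with one idea you are missing: the elementary symmetric polynomials. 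The map $\mathrm{sym}_e:\{\zeta_1,\dots,\zeta_e\}\mapsto (c_1,\dots,c_e)$ identifies $\mathcal{L}=\{\{\zeta_k\}\in(\mathbb{C}^*)^{(e)}:|\zeta_1\cdots\zeta_e|=1\}$ with $\mathbb{C}^{e-1}\times S^1$ in one stroke, and an explicit radial homeomorphism $\phi_r(p)=\frac{|p|-r}{1-|p|}\cdot\frac{p}{|p|}$ from $\mathbb{A}_r$ to $\mathbb{C}^*$, followed by a rescaling, carries $\boldsymbol{A}_r(e,\delta)$ onto $\mathcal{L}$. This handles the $S_e$-quotient globally and gives the trivialisation directly, without any fibre-by-fibre analysis.
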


Theorem \ref{properAD} generalizes the well-known Blaschke products, therefore has an independent interest. 
Its statement will be cut into several pieces, whose proofs are given in Sections \ref{annu-disk} and \ref{model}, respectively.  Abel's Theorem for principal divisors plays an important role in the proof of existence part of Theorem \ref{properAD} and its generalization to multi-connected domains in  Section \ref{muti-disk}.

\vspace{5pt}

Key \textbf{Step 2}  is to study the `abstract'   hyperbolic component $\mathcal{H}$ via some concrete spaces. To explain the strategy, let $n$ be the number of critical annular Fatou components of a representative map in $\mathcal{H}$. We associate $\mathcal{H}$ with a mapping scheme $\sigma$, a partition vector $\boldsymbol{d}=(d_1,\cdots, d_{n+1})\in\mathbb{N}^{n+1}$ of $d$, see Section \ref{dynamics} for precise definitions.
Then  $\sigma$ and $\boldsymbol{d}$ naturally 
 induce
 \begin{itemize}
 \item
  a  family of marked   rational maps $\mathcal{F}^{\chi_0}_{\sigma, \boldsymbol{d}}\subset{\rm Rat}_d$, where ${\chi_0}$ is related to the marking information;
  \item
 a  space of model maps $\mathbf{M}_\sigma$, and 
 \item
 two projections 
 $\rho:  \mathcal{F}^{\chi_0}_{\sigma,\boldsymbol{d}} \rightarrow \mathbf{M}_\sigma$ 
 and $\boldsymbol{p}: \mathcal{F}^{\chi_0}_{\sigma,\boldsymbol{d}} \rightarrow M_d$.
 \end{itemize}
 
 The model space $\mathbf{M}_\sigma$ is known to be homeomorphic to $\mathbb{R}^{4d-4-n}\times\mathbb{T}^{n}$ (see Corollary \ref{model-space1}). In order to understand the topology of (each component of) $\mathcal{F}^{\chi_0}_{\sigma, \boldsymbol{d}}$, we need study the property  of $\rho$. 
   Our main result in this step is

 \begin{thm} \label{covering0}   The map $\rho:  \mathcal{F}^{\chi_0}_{\sigma,\boldsymbol{d}} \rightarrow \mathbf{M}_\sigma$ is a   covering map of degree  $${\rm deg}(\rho)=\bigg(1-\sum_{k=1}^{n+1}\frac{1}{d_k}\bigg){\rm lcm}(d_1, \cdots, d_{n+1}) .$$
 In particular,   $\rho$ is a homeomorphism if and only if
 $$\sum_{k=1}^{n+1}\frac{1}{d_k}+\frac{1}{{\rm lcm}(d_1, \cdots, d_{n+1})}=1.$$ 
\end{thm}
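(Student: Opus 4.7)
The plan is to prove Theorem \ref{covering0} in three stages: establish that $\rho$ is a local homeomorphism, establish that $\rho$ is proper, and then compute its degree by a combinatorial enumeration of a generic fiber.

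\textbf{Stage 1 (local homeomorphism).} A point of $\mathbf{M}_\sigma$ encodes, for each of the $n+1$ marked Fatou components, a normalized proper holomorphic map on a disk or an annulus, together with the positions of its critical points. By Theorem \ref{properAD}, once these critical data are fixed (and the normalization $f(1)=1$ is imposed), such a proper map depends holomorphically on the data and is uniquely determined. Combining this rigidity with the fact that marked Fatou components depend holomorphically on $f \in \mathcal{F}^{\chi_0}_{\sigma,\boldsymbol{d}}$, I would show that $\rho$ is injective on a small neighborhood of any $f_0$; a real-dimension count ($\dim_{\mathbb{R}} \mathcal{F}^{\chi_0}_{\sigma,\boldsymbol{d}} = \dim_{\mathbb{R}} \mathbf{M}_\sigma = 4d-4$) then upgrades injectivity to a local homeomorphism.

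\textbf{Stage 2 (properness).} Next, I would show that if $f_m \in \mathcal{F}^{\chi_0}_{\sigma,\boldsymbol{d}}$ is a sequence whose models $\rho(f_m)$ converge in $\mathbf{M}_\sigma$, then $(f_m)$ has a subsequential limit in $\mathcal{F}^{\chi_0}_{\sigma,\boldsymbol{d}}$. The issue is ruling out degeneration of the Cantor-circle combinatorial structure while the model data remain bounded; I would use the uniform control on the restrictions $f_m|_{U_k}$ furnished by the model, together with the marking, to normalize the Fatou components and extract a limit. Stages 1 and 2 together imply that $\rho$ is a covering map over the connected base $\mathbf{M}_\sigma$.

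\textbf{Stage 3 (degree).} The degree equals the cardinality of any fiber $\rho^{-1}(m)$. A preimage is a realization of the $n+1$ prescribed proper maps as the restrictions of a single rational map on $\widehat{\mathbb{C}}$, which amounts to choosing an orientation-preserving identification along each of the $n$ boundary circles shared by adjacent critical Fatou components. By the uniqueness clause of Theorem \ref{properAD}, each such identification is determined up to the cyclic rotational symmetry group of the relevant proper map, of order $d_k$ on the $k$-th component. Embedding these rotations into $\mathbb{Z}/\mathrm{lcm}(d_1,\ldots,d_{n+1})\mathbb{Z}$ and modding out by the subgroups $\mathbb{Z}/d_k\mathbb{Z}$ coming from the individual symmetries, the number of compatible gluings equals $\mathrm{lcm}(d_1,\ldots,d_{n+1}) - \sum_{k=1}^{n+1} \mathrm{lcm}(d_1,\ldots,d_{n+1})/d_k$, which is the stated formula.

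The main obstacle I anticipate is the precise book-keeping in Stage 3: relating the rotational symmetries on different Fatou components in a way that yields the $\mathrm{lcm}$ structure, and verifying that after these identifications no further over-counting occurs along the tree of critical annuli. Once the degree is computed, the characterization of when $\rho$ is a homeomorphism follows by setting the degree equal to $1$ and dividing by $\mathrm{lcm}(d_1,\ldots,d_{n+1})$, which yields exactly $\sum_{k=1}^{n+1} 1/d_k + 1/\mathrm{lcm}(d_1,\ldots,d_{n+1}) = 1$.
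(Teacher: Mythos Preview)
Your three-stage outline is sensible, but each stage, as written, is missing the ingredient that actually makes the argument work in the paper, and Stage 3 in particular does not hold together.

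\textbf{Stages 1--2.} The paper does not prove properness, and does not use a dimension count. Surjectivity is established by building, from a given model $\boldsymbol{m}$, a semi-rational branched cover and then invoking the Cui--Tan Thurston-type realization theorem to straighten it to a rational map with the prescribed model. The local-homeomorphism step is done by explicit quasi-conformal surgery: one grafts a nearby model $\tilde{\boldsymbol{m}}$ into the Fatou components of a given $f$, pulls back the standard complex structure, and integrates to get a continuous section $\tilde{\boldsymbol{m}}\mapsto (f_{\tilde{\boldsymbol{m}}},\nu_{\tilde{\boldsymbol{m}}})$. Your ``rigidity of the proper map plus dimension count'' does not by itself give local injectivity of $\rho$, because two distinct $(f,\nu)$ with the same model differ only in how the map extends across the complementary annuli $A_1,\ldots,A_{n+1}$, and nothing in Theorem \ref{properAD} controls that extension. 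Likewise, your properness sketch would need exactly the kind of realization/rigidity input that Cui--Tan provides.

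\textbf{Stage 3.} Here the gap is genuine. First, adjacent critical Fatou components $D_k$ and $D_{k+1}$ do not share a boundary circle; they are separated by the annulus $A_{k+1}$, and the freedom in realizing the model lies in how $f$ covers $A$ on each $A_j$ (degree $d_j$), not in a rotational symmetry of the model maps $m_j$. Second, the sentence ``embedding these rotations into $\mathbb{Z}/\mathrm{lcm}\,\mathbb{Z}$ and modding out by the subgroups $\mathbb{Z}/d_k\mathbb{Z}$'' does not describe a well-defined group-theoretic operation whose output is $\mathrm{lcm}-\sum \mathrm{lcm}/d_k$; that number is not the order of any natural quotient. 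The paper's count is genuinely dynamical: one fixes $(f,\nu)$ in the fiber, introduces the Dehn twists $T$ along $A$ and $T_j$ along $A_j$, and shows (i) every fiber element is c-equivalent to $T^k\circ f$ for some $k\in\mathbb{Z}$; (ii) $T^k\circ f$ and $T^{k+N^*}\circ f$ are c-equivalent, where $N^*=\big(1-\sum 1/d_j\big)\mathrm{lcm}$, because twisting $N=\mathrm{lcm}$ times on $A$ can be undone by twisting $N/d_j$ times on each $A_j$ at a net cost of $\sum N/d_j$; (iii) no smaller period works. The Cui--Tan theorem is used again to turn c-equivalence classes into actual rational maps and to guarantee that distinct classes give distinct maps. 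Without this twist/c-equivalence framework your Stage 3 count is not justified, even though you land on the correct formula.
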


Theorem \ref{covering0} completely answers the question: Given a generic holomorphic model map, how many rational maps (up to M\"obius conjugation) realize it?  The answer is exactly ${\rm deg}(\rho)$.

The covering property of $\rho$ is proven in Section \ref{cov}, using quasi-conformal surgery.
The mapping degree of $\rho$ is given in Section  \ref{finite}, using an idea of twist deformation, due to   Cui \cite{C}. 


By Theorem \ref{covering0} and  using an  algebraic topology argument, we prove in Section  \ref{top_marked} that each marked hyperbolic component of $\mathcal{F}^{\chi_0}_{\sigma,\boldsymbol{d}}$ is homeomorphic to $\mathbb{R}^{4d-4-n}\times\mathbb{T}^{n}$.  Then by the finiteness property of $\boldsymbol{p}$, we will complete the proof of Theorem 
\ref{cantor-circle-locus} in Section \ref{global-hyp}.
      
Section \ref{appendix} is an appendix of some supplementary materials.  

%
%
%
%
%
%
%
%
%
%
\vspace{5pt}
 
\noindent\textbf{Acknowledgement.} This paper  grows up based on discussions with many people.
The first author would like to thank especially {\it Guizhen Cui},  for  his idea of  twist deformation \cite{C}, which  leads  to the proof  of Theorem \ref{covering}; {\it Allen Hatcher}, for providing us examples, not homeomorphic to,  but finitely  covered by  $\mathbb{T}^n$ and with Abelian transformation group;  {\it John Milnor}, for  his  paper \cite{M} as a persistent source of  inspiration and for  recommendation of the references \cite{B1, B2}; {\it   Kevin Pilgrim}, for helpful discussions on the correct formulation of the space $\mathcal{F}_{\sigma, \boldsymbol{d}}^{\chi_0}$ of marked rational maps.

%

  \section{Dynamics} \label{dynamics}
 
 
 In this section, we present some basic dynamical properties of the hyperbolic rational maps whose
  Julia sets are Cantor set of circles.
  Examples of such rational maps were  given by many people, e.g. \cite{Mc,Sh,DLU,QYY}.
 
  \begin{figure}[h] 
\begin{center}
\includegraphics[height=6cm]{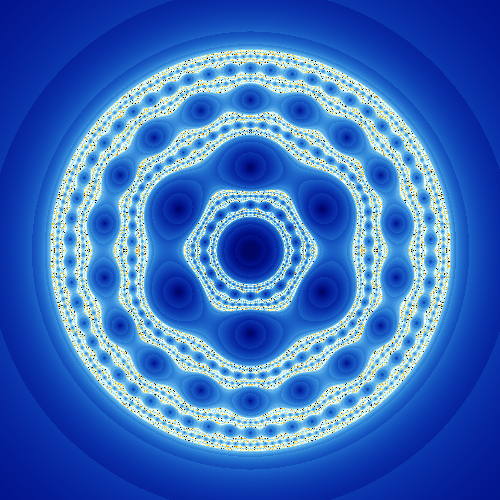}
 \caption{Example: the Julia set of $f(z)=z^3+cz^{-3}$,  when $c$ is small, is a Cantor set of circles.}
\end{center}
\end{figure}
 Let $f\in {\rm Rat}_d$ be such a map. Note that each Fatou component  of $f$ is either simply connected or doubly connected, and that the number of annular Fatou components containing critical points is  finite. 
One may also observe that there are exactly  two simply connected Fatou 
 components of $f$.  By  suitable normalization, 
 we assume one contains $0$ and  the other contains $\infty$.
   They  are  denoted by   $D_0$ and $D_\infty$,  respectively. 
 
 \begin{figure}[h] 
\begin{center}
\includegraphics[height=5cm]{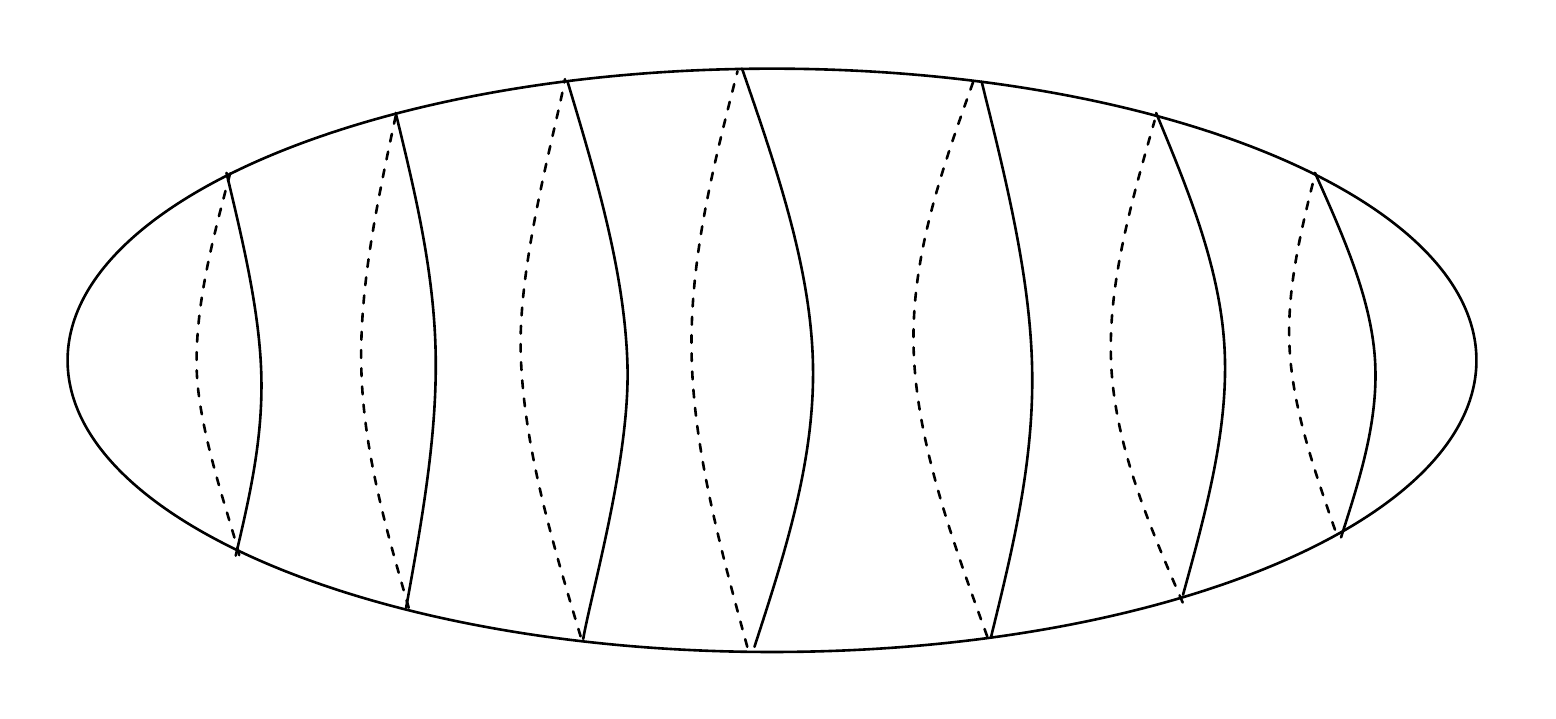}
 \put(-290, 70){$\bullet \   0$} \put(-270,70){$D_0$}  \put(-247, 70){$A_1$} \put(-205,70){$D_1$} \put(-170,70){$A_2$} \put(-140,70){$\cdots$}
  \put(-88,70){$D_n$}  \put(-65,70){$A_{n+1}$}  \put(-35,57){$D_\infty$}  \put(-25,70){$\bullet \ \infty$}
 \caption{Domains in the dynamical plane of a Cantor circle  map.   $D_1,\cdots, D_n$  are (critical)  annular Fatou components but  $A_k$'s are not.}
\end{center}
\end{figure}
The annulus  $A=\widehat{\mathbb{C}}-\overline{D}_0\cup \overline{D}_\infty$  contains no critical value of $f$,  therefore each connected component of  $f^{-1}(A)$
is again an annulus. These components  are denoted by $A_1, A_2, \cdots, A_{n+1}$, numbered so that $A_{k+1}$ and  $\infty$ are in the same component of $\mathbb{\widehat{C}}\setminus A_k$.   See Figure 2 for the arrangement of these sets.  
Let $D_k$ be the domain  lying in between $A_k$ and $A_{k+1}$, $1\leq k \leq n$. Clearly, $D_k$ is a critical annular Fatou component.
 The collection  $\boldsymbol{F}(f)$ of  all  Fatou components  containing critical or post-critical points
 is  
 $$\boldsymbol{F}(f)=\{D_0, D_\infty, D_1, \cdots, D_n\}.$$

\subsection{Mapping scheme}

The map $f$ induces a  self map $f_*$  of $\boldsymbol{F}(f)$ defined by $f_*(D_j)=f(D_j)$. The pair $(\boldsymbol{F}(f), f_*)$  is called a {\it mapping scheme}\footnote{The definition is originally  introduced by Milnor,  see \cite{M}.}.   
 There are three types of mapping schemes, up to M\"obius conjugacy:

 \textbf{Type I  :  $f(D_0)=D_0,  f(D_\infty)=D_0.$ }    In this case,
  $D_0$ contains an attracting  fixed point of $f$  and the number $n$ is odd. 
  We  normalize $f$ so that $f(0)=0$, $1\in \partial D_0$, $f(1)=1$,  and $\infty$ is the {\it conformal barycenter} \footnote{Let $\Omega$ be a Riemann surface isomorphic to $\mathbb{D}$, the {\it  conformal barycenter} of  the points $p_1, \cdots, p_k \in \Omega$ is $\phi^{-1}(0)$, where $\phi:  \Omega\rightarrow \mathbb{D}$ is the unique Riemann mapping satisfying that $\phi(p_1)+\cdots+ \phi(p_k)=0$.  See \cite{M}.}   of $f^{-1}(0)$ in 
$D_\infty$, 
counted with multiplicity.  The mapping scheme is as follows

\begin{center}       \begin{tikzpicture}    [->,>=stealth',shorten >=3pt,auto,node distance=1.0cm,
                    semithick]              
  \tikzstyle{every state}=[fill=none,draw=none,text=black]     
              \node[state]         (D1r) at (3, 1.2)              {$D_1$};
                   \node[state]         (D3r) at (3, 0.4)              {$D_3$};
    \node[state]         (dotsr) at (3, -0.4)              {$\cdots$};
     \node[state]         (Dn1) at (3, -1.2)              {$D_{n}$};
        \node[state]         (Dinftyr) at (5, 0)              {$D_\infty$};
          \node[state]         (D0r) at (7, 0)              {$D_0$ };

          \node[state]         (D2r) at (9, 1.2)              {$D_2$};
                   \node[state]         (D4r) at (9, 0.4)              {$D_4$};
    \node[state]         (dotsrr) at (9, -0.4)              {$\cdots$};
     \node[state]         (Dn) at (9, -1.2)              {$D_{n-1}$};
      
         \path    (D1r) edge[bend left=0]              node {} (Dinftyr)
               (D3r) edge[bend left=0]              node {} (Dinftyr)
               (dotsr) edge[bend left=0]              node {} (Dinftyr)
               (Dn1) edge[bend left=0]              node {} (Dinftyr)
              (Dinftyr) edge[bend left=0]              node {} (D0r)
              (D2r) edge[bend left=0]              node {} (D0r)
               (D4r) edge[bend left=0]              node {} (D0r)
               (dotsrr) edge[bend left=0]              node {} (D0r)
             (Dn) edge[bend left=0]              node {} (D0r);
              \def\circledarrow#1#2#3{ 
\draw[#1,->] (#2) +(105:#3) arc(240:-100:#3) ;}
\circledarrow{black}{D0r}{0.3cm};
               \end{tikzpicture}
        \end{center}

%
%
%
%

    \textbf{Type II :  $f(D_0)=D_\infty, f(D_\infty)=D_0$. } In this case,  the number $n$ is even and
 $f$ has an attracting cycle of period two. We may normalize $f$ so that $f(0)=\infty$, $f(\infty)=0$, $1\in \partial D_0$ and $f^2(1)=1$.
 The mapping scheme is

 \begin{center}       \begin{tikzpicture}    [->,>=stealth',shorten >=3pt,auto,node distance=1.0cm,
                    semithick]              
  \tikzstyle{every state}=[fill=none,draw=none,text=black]     
              \node[state]         (D1r) at (3, 1.2)              {$D_1$};
                   \node[state]         (D3r) at (3, 0.4)              {$D_3$};
    \node[state]         (dotsr) at (3, -0.4)              {$\cdots$};
     \node[state]         (Dn1) at (3, -1.2)              {$D_{n-1}$};
        \node[state]         (Dinftyr) at (5, 0)              {$D_0$};
          \node[state]         (D0r) at (7, 0)              {$D_\infty$ };

          \node[state]         (D2r) at (9, 1.2)              {$D_2$};
                   \node[state]         (D4r) at (9, 0.4)              {$D_4$};
    \node[state]         (dotsrr) at (9, -0.4)              {$\cdots$};
     \node[state]         (Dn) at (9, -1.2)              {$D_n$};
      
         \path    (D1r) edge[bend left=0]              node {} (Dinftyr)
               (D3r) edge[bend left=0]              node {} (Dinftyr)
               (dotsr) edge[bend left=0]              node {} (Dinftyr)
               (Dn1) edge[bend left=0]              node {} (Dinftyr)
               (Dinftyr) edge[bend left=40]              node {} (D0r)
             (D0r) edge[bend left=40]              node {} (Dinftyr)
               (D2r) edge[bend left=0]              node {} (D0r)
               (D4r) edge[bend left=0]              node {} (D0r)
               (dotsrr) edge[bend left=0]              node {} (D0r)
             (Dn) edge[bend left=0]              node {} (D0r);
     \end{tikzpicture}
        \end{center}   
        

\textbf{Type III :     $f(D_0)=D_0, f(D_\infty)=D_\infty$.}  In this case,  the number $n$ is even, and
    each of $D_0$ and $D_\infty$ contains an attracting fixed point of $f$.  We may normalize $f$ so that $f(0)=0, f(\infty)=\infty$, $1\in \partial D_0$ and $f(1)=1$.
    The mapping scheme is

\begin{center}
\begin{tikzpicture}[->,>=stealth',shorten >=3pt,auto,node distance=1.0cm,
                    semithick]              
  \tikzstyle{every state}=[fill=none,draw=none,text=black]
     \node[state]         (D1r) at (3, 1.2)              {$D_1$};
                   \node[state]         (D3r) at (3, 0.4)              {$D_3$};
    \node[state]         (dotsr) at (3, -0.4)              {$\cdots$};
     \node[state]         (Dn1) at (3, -1.2)              {$D_{n-1}$};
        \node[state]         (Dinftyr) at (5, 0)              {$D_\infty$};
          \node[state]         (D0r) at (10, 0)              {$D_0$ };
                 
          \node[state]         (D2r) at (8, 1.2)              {$D_2$};
                   \node[state]         (D4r) at (8, 0.4)              {$D_4$};
    \node[state]         (dotsrr) at (8, -0.4)              {$\cdots$};
     \node[state]         (Dn) at (8, -1.2)              {$D_n$};
      
         \path    (D1r) edge[bend left=0]              node {} (Dinftyr)
               (D3r) edge[bend left=0]              node {} (Dinftyr)
               (dotsr) edge[bend left=0]              node {} (Dinftyr)
               (Dn1) edge[bend left=0]              node {} (Dinftyr)
               (D2r) edge[bend left=0]              node {} (D0r)
               (D4r) edge[bend left=0]              node {} (D0r)
               (dotsrr) edge[bend left=0]              node {} (D0r)
             (Dn) edge[bend left=0]              node {} (D0r);

 \def\circledarrow#1#2#3{ 
\draw[#1,->] (#2) +(25:#3) arc(160:-160:#3) ;}
\circledarrow{black}{Dinftyr}{0.5cm};

 \def\circledarrow#1#2#3{ 
\draw[#1,->] (#2)+(25:#3) arc(160:-160:#3) ;}
\circledarrow{black}{D0r}{0.5cm};
 \end{tikzpicture}
 \end{center}

In either case,  set $d_k=\deg(f|_{A_k})$. It's clear that $d=\sum_{k=1}^{n+1}  d_k$. Note that the annuli $A_k$'s are contained in $A$ and  they have disjoint closures, by   the Gr\"otzsch inequality
 $${\rm mod}(A)> \sum_{k=1}^{n+1} {\rm mod}(A_k)={\rm mod}(A)\sum_{k=1}^{n+1}\frac{1}{d_k} \    \Longrightarrow \   \sum_{k=1}^{n+1}\frac{1}{d_k}<1.$$

Therefore, the dynamics of  $f$ induces a partition of   $d$ with number vector $\boldsymbol{d}=\boldsymbol{d}(f)=(d_1, d_2,\cdots,d_{n+1})$ satisfying 
$$|\boldsymbol{d}|:=\sum_{k=1}^{n+1} d_k=d,   \ \  \   \sum_{k=1}^{n+1}\frac{1}{d_k}<1.  \ \   (*)$$
We say a number vector $\boldsymbol{d}\in \mathbb{N}^{n+1}$ satisfying $(*)$ is {\it admissible}. 

Note  also that $n\geq 1$. By $(*)$,  all $d_k\geq 2$  and at most one of $d_k$ is two, therefore $d\geq 2+3=5$. More generally,  by the mean inequality,
$$d>\Big(\sum_{k=1}^{n+1}d_k\Big)\Big(\sum_{k=1}^{n+1}\frac{1}{d_k}\Big)\geq (n+1)^2 \     \Longrightarrow n< \sqrt{d}-1.$$

We remark that the mapping scheme can be  recorded by the pair $(\mathcal{I}, \tau)$, where  $\mathcal{I}=\{0, \infty,1,\cdots, n\}$ is the index set of the Fatou components in $\boldsymbol{F}(f)$,  and  $\tau$ is a self map of $\mathcal{I}$ defined  by  $\tau(k)=j$ if $f(D_k)=D_j$.

\subsection{Boundary marking and the space $\mathcal{F}^{\chi_0}_{\sigma,\boldsymbol{d}}$} \label{boundary-marking} From the previous subsection, we see that  the hyperbolic component $\mathcal{H}\subset M_d$, consisting of Type-$\sigma$ ($\sigma\in \{I, II, III\}$)  maps,  
 induces an integer  $n\geq1$ (the number of critical annular Fatou components),  and an admissible partition vertor $\boldsymbol{d}=(d_1, d_2,\cdots,d_{n+1})\in  \mathbb{N}^{n+1}$.

Let $\mathcal{F}_{\sigma, \boldsymbol{d}}\subset {\rm Rat}_d$ be the collection of   Type-$\sigma$ hyperbolic rational maps $f$ whose Julia sets are Cantor circles, normalized as the previous subsection, and $\boldsymbol{d}(f)=\boldsymbol{d}$.
The set $\mathcal{F}_{\sigma, \boldsymbol{d}}$,  as a subspace of ${\rm Rat}_d$,    might be disconnected.

Following Milnor \cite{M},  a {\it boundary marking} for a map $f\in  \mathcal{F}_{\sigma, \boldsymbol{d}}$ means a function     $\nu:  \boldsymbol{F}(f)\rightarrow \mathbb{C}$ which assigns to each  $U\in \boldsymbol{F}(f)$ a  boundary point $\nu(U)$, satisfying that
$$f(\nu(U))=\nu(f(U)),   \  \nu(D_0)=1.$$

Note that the choice of the boundary marking is not unique. In fact, when we fix the point $\nu(f(U))$, there are at most $\deg(f|_{U})$ choices of the  marking point  $\nu(U)$.
It follows that there are finitely many  choices of the boundary marking  $\nu$.  
The {\it characteristic} of  $\nu$, denoted by $\chi(\nu)$, is a symbol vector $\chi=(\epsilon_1, \cdots, \epsilon_n)\in \{\pm\}^n$, defined in the way that
$$\epsilon_k= \begin{cases} +, &\text{ if } \nu(D_k)\in \partial D_k \cap \partial A_{k+1}, \\
-, &\text{ if } \nu(D_k)\in \partial D_k \cap \partial A_{k}.
\end{cases}$$

We call the pair $(f, \nu)$ a {\it marked map}. Fix a symbol vector $\chi_0$,
define $$\mathcal{F}^{\chi_0}_{\sigma,\boldsymbol{d}}=\{(f, \nu); f\in  \mathcal{F}_{\sigma,\boldsymbol{d}},  \
\chi(\nu)=\chi_0 \}.$$

The set $\mathcal{F}^{\chi_0}_{\sigma,\boldsymbol{d}}$ has a natural topology so that every map of $\mathcal{F}_{\sigma,\boldsymbol{d}}$ has a
neighborhood $N$ which is evenly covered  under the projection $\mathcal{F}^{\chi_0}_{\sigma,\boldsymbol{d}}\rightarrow\mathcal{F}_{\sigma,\boldsymbol{d}}$, defined by sending $(f, \nu)$ to $f$.  To see this, it is enough to note that  each marked point  $\nu(D_k)$ of $f$ is preperiodic and
eventually repelling, and therefore deforms continuously as we deform the map $f$.
 As a topology space, $\mathcal{F}^{\chi_0}_{\sigma,\boldsymbol{d}}$ might be disconnected.

\subsection{Model map} \label{model-map0}

For each marked map 
$(f, \nu)\in \mathcal{F}^{\chi_0}_{\sigma,\boldsymbol{d}}$ and each Fatou component $D_j\in \boldsymbol{F}(f)$, there is a conformal isomorphism 
$\kappa_{j}$ mapping $D_j$ onto $\mathbb{D}$ or $\mathbb{A}_{r_j} $ (here $r_j=e^{-2\pi {\rm mod}(D_j)}$ if $D_j$ is an annulus), satisfying that $\kappa_{j}(\nu(D_j))=1$. 
If $D_j$ is a disk, then the index  $j$ is either $0$ or $\infty$, in this case we further require that $\kappa_j(j)=0\in \mathbb{D}$.

We then get a {\it model map} $m_j$ for $f|_{D_j}$,  defined so that the  following  diagram is commutative:
$$
\xymatrix{ & D_j \ar[r]^{f|_{D_j}  \ \ \ \   }
\ar[d]_{\kappa_j}
& D_{\tau(j)}  \ar[d]^{\kappa_{\tau(j)}  }\\
&\mathbb{D} \text{ or } \mathbb{A}_{r_j}  \ar[r]_{m_j} & \mathbb{D}}
$$ 

Now let's look at the model maps $m_j$.  First, it is a  standard fact that any proper holomorphic map $\beta$ from $\mathbb{D}$ onto itself with $\beta(1)=1$  can be written uniquely as a $D$-fold Blaschke
product
$$ \beta(z)= \Bigg( \prod_{k=1}^{D}\frac{1-\bar{a}_k}{1-a_k}\Bigg) \left(\prod_{k=1}^{D} \frac{z-{a}_k} {1-\bar{a}_k z}\right),  \ \  a_1, \cdots, a_{D} \in \mathbb{D}.$$

%
%
  We say $\beta$ is {\it fixed point centered} if $\beta(0)=0$, and {\it zero centered} if $0$ is the conformal barycenter  of $\beta^{-1}(0)$ (counted with multiplicity) in $\mathbb{D}$. 
  
Denote by  $\boldsymbol{B}^{\rm{fc}}_D$ the  space of  degree-$D$  fixed point centered  Blaschke products fixing the boundary point $1$;
   by   $\boldsymbol{B}^{\rm{zc}}_D$, the space of  degree-$D$   zero centered   Blaschke products fixing the boundary point $1$.
  


It's clear that $m_{0}\in \boldsymbol{B}^{\rm{fc}}_{d_1}$. If $f\in  \mathcal{F}_{I, \boldsymbol{d}}$, then the model map  $m_{\infty}\in \boldsymbol{B}^{\rm{zc}}_{d_{n+1}}$; if  $f\in  \mathcal{F}_{II,\boldsymbol{d}}$ or 
 $ \mathcal{F}_{III, \boldsymbol{d}}$,  then $m_{\infty}\in\boldsymbol{B}^{\rm{fc}}_{d_{n+1}}$.
 
For $1\leq k\leq n$,  the model map $m_{k}$ is a proper holomorphic map  from the annulus  $\mathbb{A}_{r_k}$ onto $\mathbb{D}$,  fixing $1$ and with mapping degree $d_k+d_{k+1}$.  Let  $\delta_k\in\{d_k, d_{k+1}\}$ be the  mapping degree of $m_k$ on the inner boundary $C_{r_k}$ of $\mathbb{A}_{r_k}$.  Clearly, $\delta_k=d_{k+1}$ if $\epsilon_k=-$, and $\delta_k=d_{k}$ 
if $\epsilon_k=+$.

\begin{defi}[Model space of annulus-disk mappings]\label{modelAD} Given integers $e>\delta>0$,  let $\boldsymbol{A}(e, \delta)$ be the set of all annulus-disk mappings, defined by
$$\boldsymbol{A}(e, \delta)=\left\{
\begin{array}{c}
 m: \mathbb{A}_r \rightarrow \mathbb{D} \text{ is a proper and holomorphic map} ; \\
 {\rm deg}(m)=e, \ {\rm deg}(m|_{C_r})=\delta, \  m(1)=1, \  r\in (0,1)
\end{array} 
 \right\}.$$
 
The topology of  $\boldsymbol{A}(e, \delta)$ is given as follows: We say that the maps $\phi_k: \mathbb{A}_{r_k}\rightarrow \mathbb{D}$  converges to  $\phi: \mathbb{A}_{r}\rightarrow \mathbb{D}$ in $\boldsymbol{A}(e, \delta)$, if $r_k\rightarrow r$, and  for any compact subset $K\subset \mathbb{A}_{r}$, $\phi_k|_K$ converges  uniformly
 to $\phi|_K$ for $k$ sufficiently large.
 \end{defi}

It's easy to see that the map $m_k: \mathbb{A}_{r_k}\rightarrow\mathbb{D}$ obtained above is an element of $ \boldsymbol{A}(d_k+d_{k+1},  \delta_k)$.
Now, we define the model space $\mathbf{M}_\sigma$ by
$$\mathbf{M}_\sigma=
\begin{cases}
\boldsymbol{B}^{\rm{fc}}_{d_1}\times \boldsymbol{B}^{\rm{zc}}_{d_{n+1}}\times \prod_{k=1}^n \boldsymbol{A}(d_k+d_{k+1},  \delta_k), &\text{ if } \sigma=I,  \\
\boldsymbol{B}^{\rm{fc}}_{d_1}\times \boldsymbol{B}^{\rm{fc}}_{d_{n+1}}\times \prod_{k=1}^n \boldsymbol{A}(d_k+d_{k+1},  \delta_k), &\text{ if } \sigma={II} \text{ or } {III}.
\end{cases}$$
 
 There is a natural map from  $\mathcal{F}^{\chi_0}_{\sigma,\boldsymbol{d}}$ to  $\mathbf{M}_\sigma$
$$\rho:  \mathcal{F}^{\chi_0}_{\sigma,\boldsymbol{d}} \rightarrow \mathbf{M}_\sigma, \ \ \ \  (f,\nu)\mapsto \boldsymbol{m}(f)=(m_{0}, m_{{\infty}}, m_{1}, \cdots, m_{{n}}).$$

Two questions naturally arise:

\begin{que}\label{question21} What is the topology of the model spaces   $$\boldsymbol{B}^{\rm{fc}}_{D}, \ \boldsymbol{B}^{\rm{zc}}_{D}, \ \boldsymbol{A}(d_k+d_{k+1},  \delta_k)?$$
 \end{que}

\begin{que}\label{question22} What is the property of $\rho$? Can we know the topology of the marked hyperbolic component (and further  $\mathcal{H}$) from that of $\mathbf{M}_\sigma$?
 \end{que}

For the first question, the following is known 

\begin{lem} [Lemma 4.9 \cite{M}]  \  \label{model-space0} For any integer $D\geq2$, the  model spaces  $\boldsymbol{B}_D^{\rm{fc}}, \boldsymbol{B}_D^{\rm{zc}}$ both are homeomorphic to $\mathbb{R}^{2D-2}$.
\end{lem}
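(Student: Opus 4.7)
The plan is to parametrize both model spaces by their zero divisors in $\mathbb{D}$, use elementary symmetric polynomials to realize each as a bounded open subset of $\mathbb{C}^{D-1}$, and then exhibit each as weighted star-shaped at the origin, so that a standard radial construction yields a homeomorphism to $\mathbb{R}^{2D-2}$.

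Because the normalization $\beta(1)=1$ removes the rotational freedom, every $\beta\in\boldsymbol{B}_D^{\mathrm{fc}}\cup\boldsymbol{B}_D^{\mathrm{zc}}$ is uniquely determined by its unordered $D$-tuple of zeros in $\mathbb{D}$. Thus
$$\boldsymbol{B}_D^{\mathrm{fc}}\cong\mathrm{Sym}^{D-1}(\mathbb{D}),\qquad \boldsymbol{B}_D^{\mathrm{zc}}\cong\Bigl\{\{a_1,\ldots,a_D\}\in\mathrm{Sym}^D(\mathbb{D}):\sum_{k=1}^{D} a_k=0\Bigr\}.$$
The elementary symmetric polynomial map $\{a_1,\ldots,a_n\}\mapsto(e_1,\ldots,e_n)$ is a proper continuous bijection $\mathrm{Sym}^n(\mathbb{C})\to\mathbb{C}^n$ (Vieta plus continuity of roots in coefficients), hence a homeomorphism. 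Restricting, it presents $\boldsymbol{B}_D^{\mathrm{fc}}$ as a bounded open subset $V_{\mathrm{fc}}\subset\mathbb{C}^{D-1}$, and $\boldsymbol{B}_D^{\mathrm{zc}}$ as a bounded open subset $V_{\mathrm{zc}}\subset\mathbb{C}^{D-1}$ with coordinates $(e_2,\ldots,e_D)$ in the slice $\{e_1=0\}$.

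The geometric core is that each $V\in\{V_{\mathrm{fc}},V_{\mathrm{zc}}\}$ is \emph{weighted} star-shaped at the origin: scaling all roots by $\lambda\in[0,1]$ keeps them in $\mathbb{D}$ (and, in the zc-case, preserves $\sum a_k=0$), while scaling each elementary symmetric polynomial $e_k$ by $\lambda^k$. Hence the dilation $\psi_\lambda$ that multiplies the $k$-th coordinate by $\lambda^k$ preserves $V$ for every $\lambda\in[0,1]$. The self-homeomorphism $\Phi$ of $\mathbb{C}^{D-1}$ that in each coordinate sends $re^{i\theta}$ to $r^{1/k}e^{i\theta}$ conjugates $\psi_\lambda$ to the standard scaling $u\mapsto\lambda u$, and carries $V$ onto a bounded open $V'\subset\mathbb{C}^{D-1}$ star-shaped at $0$ in the usual sense. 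A direct calculation gives the escape time $\tau(u):=\sup\{t>0:tu\in V'\}=1/\max_k|a_k|$, where $a_k$ are the roots of the polynomial encoded by $\Phi^{-1}(u)$; since roots depend continuously on coefficients, $\tau$ is continuous on $V'\setminus\{0\}$. The gauge formula $u\mapsto u/(\tau(u)-1)$ then furnishes an explicit homeomorphism $V'\to\mathbb{R}^{2D-2}$.

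The main subtlety I expect lies in the $\boldsymbol{B}_D^{\mathrm{zc}}$ case: one must verify that the $S_D$-quotient of the slice $\{\sum a_k=0\}\subset\mathbb{D}^D$ is faithfully parametrized by $(e_2,\ldots,e_D)$ with open image in $\mathbb{C}^{D-1}$, which comes down to the properness of the roots-to-coefficients correspondence restricted to the hyperplane $\{e_1=0\}$. Once this is secured, the weighted-scaling/gauge argument finishes both cases uniformly.
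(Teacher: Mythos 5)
Your argument is correct, and it is worth noting that the paper itself gives no proof of this lemma at all: it simply quotes Lemma 4.9 of Milnor's paper [M1], where the statement is established by the same basic device you use, namely that the normalization $\beta(1)=1$ lets one parametrize a Blaschke product by its unordered zero set, so that $\boldsymbol{B}_D^{\rm fc}$ is identified with the symmetric product $\mathbb{D}^{(D-1)}$ and $\boldsymbol{B}_D^{\rm zc}$ with the slice $\{\sum a_k=0\}$ of $\mathbb{D}^{(D)}$ (your translation of the conformal-barycenter condition into $\sum a_k=0$ is legitimate: if the barycenter is $0$, the normalizing disk automorphism fixes $0$, hence is a rotation, so the plain sum of the zeros vanishes). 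What you add beyond the citation is a self-contained proof that these coefficient regions are cells: passing to elementary symmetric functions, observing the weighted star-shapedness $e_k\mapsto\lambda^k e_k$, straightening it by the coordinatewise root map $re^{i\theta}\mapsto r^{1/k}e^{i\theta}$, and using the explicit escape time $\tau(u)=1/\max_k|a_k|$, whose continuity (and positive homogeneity) makes the radial gauge map a genuine homeomorphism — this is exactly the point where a naive ``bounded open star-shaped $\Rightarrow$ cell'' claim would be shaky, and you address it correctly. Two small points you should make explicit if you write this up: (i) the zeros-parametrization is a homeomorphism, not merely a bijection, for the topology of locally uniform convergence (standard, via continuity of zeros in coefficients and of the unimodular prefactor in the zeros); (ii) continuity of the inverse of the gauge map, which follows either from invariance of domain or directly from the continuity and homogeneity of $\tau$. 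Finally, note that the fc case admits an even quicker route — $\mathbb{D}\cong\mathbb{C}$ gives $\mathbb{D}^{(D-1)}\cong\mathbb{C}^{(D-1)}\cong\mathbb{C}^{D-1}$ at once — whereas the zc case genuinely needs something like your slice-plus-scaling argument, since a homeomorphism $\mathbb{D}\to\mathbb{C}$ does not respect the barycenter constraint; your uniform treatment of both cases is therefore a reasonable trade-off.
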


So the essential difficulty for the first question is to characterize the topology of  the space $\boldsymbol{A}(e, \delta)$. This  will be done in Sections \ref{muti-disk}, \ref{annu-disk} and \ref{model}.

For the second  question,  we will prove the finite  covering property  of $\rho$ in Sections \ref{cov} and \ref{finite} and further use this property to describe the topology of  the marked hyperbolic component   in Section \ref{top_marked} and hyperbolic component in Section \ref{global-hyp}.


\section{Proper mapping from multi-connected domain to disk} \label{muti-disk}

In this section, we  consider a slightly  general  question:  Given   a multi-connected domain $\Omega$ and a  finite set $Z\subset \Omega$ ,   under what conditions  there is a  proper holomorphic  mapping from $\Omega$ onto  the unit disk 
$\mathbb{D}$, with $Z$ as the prescribed zero set? 
 Here, we recall that a continuous map $g: X\rightarrow Y$ between two topological spaces 
  is said {\it proper},
 if the preimage $g^{-1}(K)$ of every compact subset $K$ of $Y$ is compact in $X$.

To make the question precise,  let $\Omega_g$ be a  multi-connected planar domain  bounded by  the Jordan curves $\gamma_0, \gamma_1, \cdots, \gamma_g$, where $g\geq 1$ is an integer.  We associate each boundary curve $\gamma_k$ with  a 
positive integer $d_k$.   Let $p_1, \cdots, p_e$ be $e=\sum_{k=0}^g d_k$ points in $\Omega_g$, not necessarily distinct.
 
 \begin{que}\label{question1} Is there a proper holomorphic map   $f: \Omega_g \rightarrow \mathbb{D}$ of degree 
$e$ with $f^{-1}(0)=\{p_1, \cdots, p_e\}$ and ${\deg (f|_{\gamma_k})}=d_k$ for all $0\leq k\leq g$?
 \end{que}
 
 We remark that the notation `$\{\}$' here and throughout the paper should be understood as  the  `weighted set', or the  {\it divisor}. In other words,  
 when two (or more) points are same, there is a multiplicity  serving as the weight.  For example, $\{p,p,q\}=\{2p,q\}\neq\{p,q\}$.
 
In general, the answer to Question \ref{question1} is negative.  The aim of this section is to give a necessary and sufficient condition to guarantee the existence of  the proper map,
using  Abel's Theorem for principle divisors in the Riemann surface theory.
 The main result, with an independent interest,  is as follows:  


\begin{thm}\label{proper-m-d}    The following two statements are equivalent:

1. There is a proper holomorphic map   $f: \Omega_g\rightarrow \mathbb{D}$ of degree 
$e=\sum_{k=0}^g d_k$ with $f^{-1}(0)=\{p_1, \cdots, p_e\}$ and ${\deg (f|_{\gamma_k})}=d_k$  for all $0\leq k\leq g$.

2. The following equations hold
$$\sum_{j=1}^e u_{k}(p_j)=d_k, \  \    1\leq k\leq g,$$
where $ u_{k}: \Omega_g\rightarrow \mathbb{R}$  is a harmonic function  satisfying that 
$$ u_{k}|_{\gamma_k}=1, \   u_{k}|_{\partial\Omega_g\setminus\gamma_k}=0.$$

Moreover, the proper holomorphic map   $f$ is unique if we specify  the value of $f$ at a boundary point $q\in \partial \Omega_g$. 
\end{thm}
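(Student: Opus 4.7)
The plan is to split the proof into three parts: necessity $(1) \Rightarrow (2)$, sufficiency $(2) \Rightarrow (1)$, and uniqueness. The first and third are classical potential theory; the substantive step is the sufficiency, where Abel's theorem on the Schottky double $\widehat\Omega_g$ enters.

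For $(1) \Rightarrow (2)$ I would start from a proper holomorphic $f \colon \Omega_g \to \mathbb{D}$ of the stated form. The function $\log|f|$ is then harmonic in $\Omega_g \setminus \{p_1, \ldots, p_e\}$, extends continuously to zero on $\partial\Omega_g$, and has a logarithmic pole at each $p_j$; uniqueness of the Green's function forces
$$\log|f(z)| = -\sum_{j=1}^{e} G(z, p_j),$$
where $G(\cdot, p)$ is the Green's function of $\Omega_g$ with pole at $p$. The multivalued harmonic conjugate $\arg f$ has period $2\pi d_k$ around $\gamma_k$ by the definition of the boundary degree, and by Cauchy--Riemann this equals the flux $\int_{\gamma_k} \partial_n \log|f|\, ds$. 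Combining with the identity $\int_{\gamma_k} \partial_n G(z, p)\, ds = -2\pi u_k(p)$ (Green's second identity applied to $u_k$ and $G(\cdot, p)$) yields the equations $\sum_j u_k(p_j) = d_k$. The $k=0$ case is automatic from the others via $u_0 \equiv 1 - \sum_{k=1}^g u_k$ and $\sum_k d_k = e$.

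For $(2) \Rightarrow (1)$ I would run the construction in reverse. Set $u(z) := -\sum_j G(z, p_j)$, a harmonic function on $\Omega_g \setminus \{p_j\}$ with boundary values zero and logarithmic poles at the $p_j$'s. By the same flux calculation, its locally defined harmonic conjugate $v$ has period $2\pi \sum_j u_k(p_j) = 2\pi d_k \in 2\pi \mathbb{Z}$ around each interior boundary curve $\gamma_k$, so $f := e^{u + iv}$ is single-valued and holomorphic on $\Omega_g$. Since $|f| = e^u$ tends to $1$ on $\partial\Omega_g$ and vanishes precisely at the $p_j$'s, $f$ is proper onto $\mathbb{D}$, and its boundary degrees are forced to be the prescribed $d_k$'s by the argument principle. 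Interpreted on the Schottky double $\widehat\Omega_g$, a compact Riemann surface of genus $g$ carrying an antiholomorphic involution $\iota$ with $\partial\Omega_g$ as fixed set, this is exactly Abel's theorem: the divisor $\sum_j (p_j) - \sum_j (\iota(p_j))$ is principal precisely when its Abel--Jacobi image vanishes, and condition $(2)$ encodes the vanishing of the ``real part'' of this image.

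Uniqueness is immediate: if $f_1, f_2$ both realize $(1)$ and agree at $q \in \partial\Omega_g$, then $f_1/f_2$ is holomorphic and nonvanishing in $\Omega_g$ (the zero divisors coincide) and unimodular on the boundary, so the maximum and minimum modulus principles force $f_1/f_2$ to be a unimodular constant, pinned to $1$ by the boundary value. The main technical obstacle lies in the sufficiency step, and specifically in the bookkeeping of orientations and winding numbers: one must verify that $e^{u+iv}$ has no spurious zeros and that its boundary winding on each $\gamma_k$ comes out to exactly $d_k$ with the correct sign. I expect to settle this by direct argument-principle bookkeeping, using $\log|f| = u$ together with $\sum_k d_k = e$, rather than by setting up the full Jacobian/period-matrix machinery on $\widehat\Omega_g$.
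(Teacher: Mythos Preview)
Your proposal is correct and takes a genuinely different route from the paper's proof. The paper passes to the Schottky double $S_g$, chooses a basis $\omega_1,\dots,\omega_g$ of holomorphic differentials dual to the boundary curves, and invokes Abel's theorem for the symmetric divisor $\sum p_j - \sum p_j^*$; the period matrix $B$ is shown to be invertible, reducing the Abel condition to $\sum_j u_k(p_j)\in\mathbb{Z}$, and a separate continuity argument (the vector $\Psi(\zeta)=(\sum_{z\in f^{-1}(\zeta)} u_k(z))_k$ is continuous and $\mathbb{Z}^g$-valued, hence constant, with boundary value $(d_1,\dots,d_g)$) pins the integers down to the prescribed $d_k$. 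Your argument stays entirely inside $\Omega_g$ and is pure potential theory: the single identity $\log|f|=-\sum_j G(\cdot,p_j)$ together with $\int_{\gamma_k}\partial_n G(\cdot,p)\,ds=-2\pi u_k(p)$ handles both directions, and Abel's theorem appears only as an interpretive remark. This is more elementary and self-contained; the paper's approach situates the result in its natural Riemann-surface and divisor-theoretic context and makes the role of Abel's theorem explicit, which is the theme the authors want to emphasize. Your uniqueness argument via the ratio $f_1/f_2$ is essentially the same as the paper's, though you avoid extending to the double. The orientation bookkeeping you flag is real but routine: once you know the period of $\arg f$ around $\gamma_k$ has absolute value $2\pi d_k$, positivity of the boundary degree and the global constraint $\sum_k d_k=e$ from the argument principle fix all signs.
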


\begin{proof}  By  Koebe's generalized Riemann mapping  theorem \cite{Ko1, Ko2}, $\Omega_g$ is bi-holomorphic to   a multi-connected domain  whose  boundaries are  round circles. 
For this, we may assume that  all $\gamma_k$ are round circles.

 We consider the Schottky double surface $S_g=\Omega_g\cup \partial \Omega_g \cup \Omega_g^*$ of $\Omega_g$, 
which is a compact Riemann surface of genus $g$.   It admits an anti-holomorphic involution $\sigma$,  fixing 
$\partial \Omega_g$ and
mapping $\Omega_g$ to its symmetric part $\Omega_g^*$. 
A basis of the homology group $H_1(S_g)$ can be chosen as
 $\gamma_1,\cdots, \gamma_g$ and $\alpha_1, \cdots, \alpha_g$, see Figure 3.
Let $\omega_1, \cdots, \omega_g$ be the dual basis of $\gamma_1,\cdots, \gamma_g$ in $\mathcal{H}^1(S_g)$, the space of holomorphic differentials
on $S_g$, satisfying that 
$$\int_{\gamma_j} \omega_k= \delta_{kj}, \ \ j,k=1,\cdots,g.$$
\begin{figure}[h] 
\begin{center}
\includegraphics[height=6cm]{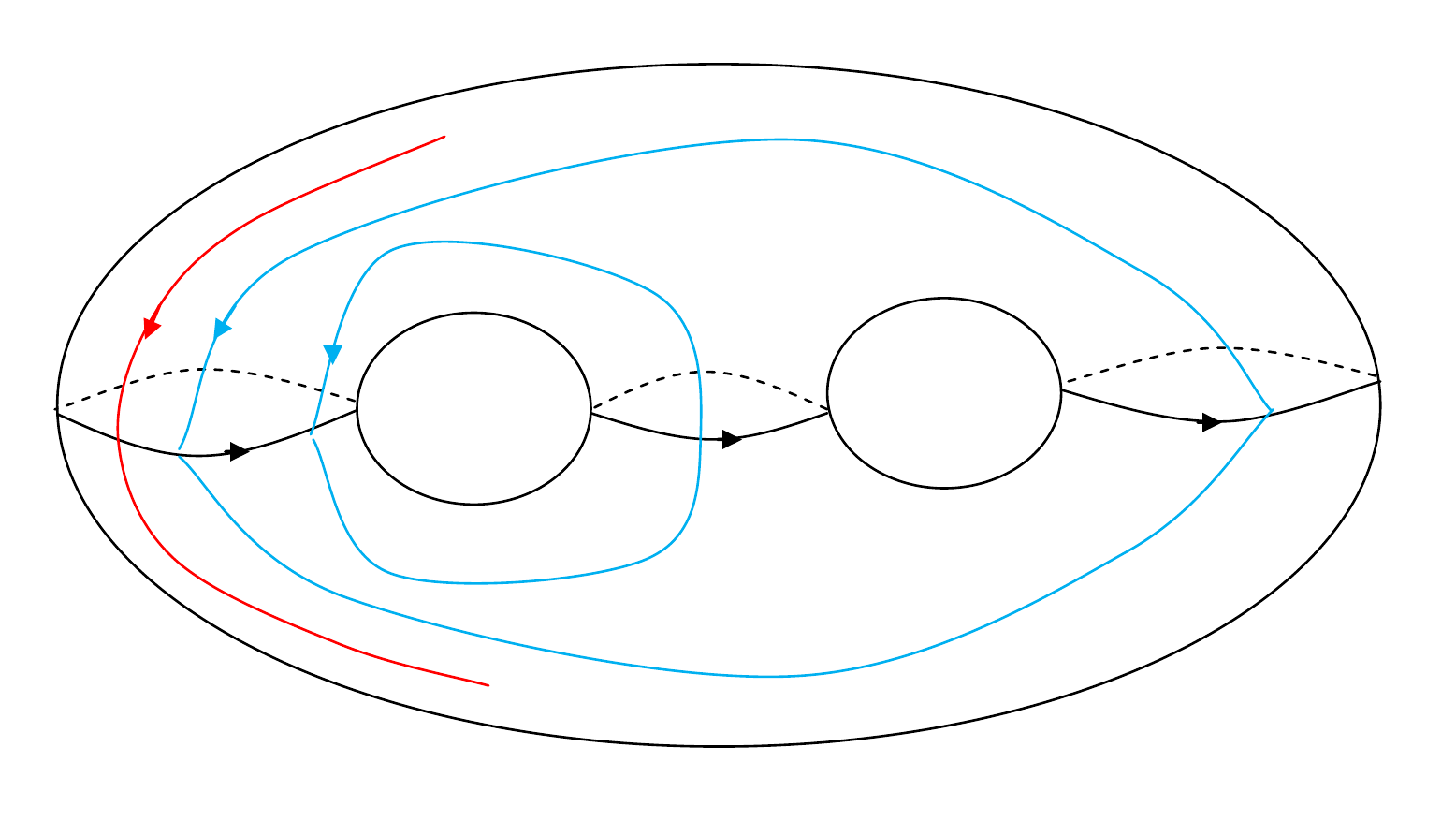}
 \put(-286,98){$\beta_z$} \put(-255,85){$\gamma_0$} \put(-175,72){$\gamma_1$} \put(-72,76){$\gamma_2$} \put(-145,50){$\Omega_g$} \put(-145,120){$\Omega_g^*$}
  \put(-185,105){$\alpha_1$}   \put(-185,130){$\alpha_2$}  \put(-215,138){$\bullet  \ z^*$}  \put(-205,26){$\bullet  \ z$}
\caption{The  Schottky double surface $S_g$ of $\Omega_g$, when $g=2$}
\end{center}
\end{figure}

By replacing $\omega_k$ with $\frac{1}{2}(\omega_k+ \overline{\sigma^*\omega_k})$, 
  we may assume that   $\sigma^*\omega_k= \overline{\omega}_k$.
Let 
$$b_{kj}=\int_{\alpha_j} \omega_k,   1\leq k,j\leq g.$$
It's known that $B=(b_{kj})$  satisfies $\overline{B}= -B$ (this means the real part of $B$ vanishes) and the imaginary  part of $B$ is symmetric and positive definite  (see \cite[Proposition, p.63]{FK},   note that  here we chose  a different  
 basis of $H_1(S_g)$ from that in \cite{FK}). 
For any $z\in \Omega_g$, let $\beta_z$ be a curve in $S_g$ connecting  $z^*=\sigma(z)$ to $z$, symmetric about $\partial\Omega_g$. Define a function 
$$\Phi_k(z)=\int_{\beta_z} \omega_k.$$
It's easy to check that  $z\mapsto\Phi_k(z)$ is a  harmonic function on $\Omega_g$,  with vanishing real part, and satisfying  that
$$\Phi_k|_{\gamma_0}=0  \text{\  and \ } \Phi_k|_{\gamma_j}=\int_{\alpha_j} \omega_k=b_{kj}, \  1\leq j\leq g.$$
Therefore, we have 
$$ \Phi_k=\sum_{j=1}^g b_{kj} u_{{j}}.$$
By Abel Theorem (see \cite[Theorem 7.26]{S} or \cite[Theorem, p.93]{FK}),   there is a proper holomorphic map $F: S_g\rightarrow \mathbb{\widehat{C}}$ for which  $F^{-1}(0)=\{p_1, \cdots, p_e\}$ and  $F^{-1}(\infty)=\{p^*_1, \cdots, p^*_e\}$  (which is  equivalent to the statement: there is a proper holomorphic map   $f: \Omega_g\rightarrow \mathbb{D}$ of degree $e$ with $f^{-1}(0)=\{p_1, \cdots, p_e\}$),  if and only if

$$\left(
\begin{array} {c}
\sum_{k=1}^e\Phi_1(p_k)\\
\sum_{k=1}^e\Phi_2(p_k)\\
\vdots \\ 
\sum_{k=1}^e\Phi_g(p_k)\end{array}
\right)\in \mathbb{Z}\left(
\begin{array} {c}
b_{11}\\
b_{21}\\
\vdots \\ 
b_{g1}\end{array}
\right)+\cdots+\mathbb{Z}\left( \begin{array} {c}
b_{1g}\\
b_{2g}\\
\vdots \\ 
b_{gg}\end{array}
\right).$$
 So  there are integers $n_1, \cdots, n_g$ such that
 $$\left(
\begin{array} {c}
\sum_{k=1}^e\Phi_1(p_k)\\
\sum_{k=1}^e\Phi_2(p_k)\\
\vdots \\ 
\sum_{k=1}^e\Phi_g(p_k)\end{array}
\right)
=B \left(
\begin{array} {c}
\sum_{k=1}^e u_1(p_k)\\
\sum_{k=1}^e u_2(p_k)\\
\vdots \\ 
\sum_{k=1}^e u_g(p_k)\end{array}
\right)= B\left( \begin{array} {c}
n_{1}\\
n_{2}\\
\vdots \\ 
n_{g}\end{array}
\right).$$
Note that the matrix  $B$ is  reversible, we have
  $$\sum_{j=1}^e u_{k}(p_j)=n_k, \  \    1\leq k\leq g.$$
  These equalities guarantee the existence of the proper map $f: \Omega_g\rightarrow \mathbb{D}$.
  To determine the integers $n_1, \cdots, n_g$,
  we define a  vector-valued function  $\Psi: \mathbb{D}\rightarrow \mathbb{R}^g$ by
  $$\Psi(\zeta)=\Big(\sum_{z\in f^{-1}(\zeta)}u_1(z), \cdots, \sum_{z\in f^{-1}(\zeta)}u_g(z)\Big)^t.$$
  It's clear that $\Psi$ is continuous. By above argument,  $\Psi$ takes discrete values in $\mathbb{Z}^g$.
So it is a constant vector.  Note that we have required that ${\deg (f|_{\gamma_k})}=d_k$ for all $0\leq k\leq g$, meaning that when $\zeta$ approaches 
$\partial \mathbb{D}$,  the value $ \Psi(\zeta)$ will approach the constant vector $(d_1, \cdots, d_g)^t$. Therefore we have
    $$\sum_{j=1}^e u_{k}(p_j)=d_k, \  \    1\leq k\leq g.$$
 All the arguments above are reversible, implying the equivalence. 
 
 To finish, we prove the uniqueness part.   Any  proper holomorphic  map $f: \Omega_g\rightarrow \mathbb{D}$ can induce a holomorphic map $\widehat{f}: S_g \rightarrow \mathbb{\widehat{C}}$ by reflection, with zeros $p_1, \cdots, p_e$ and poles    $p^*_1, \cdots, p^*_e$. Let  $f_1, f_2$ be two  proper holomorphic  maps satisfying the first statement of the theorem, then $\widehat{f_2}/ \widehat{f_1}: S_g\rightarrow \mathbb{{C}}$ is a holomorphic map. Therefore $\widehat{f_2}/ \widehat{f_1}$ is  necessarily  constant. Since they are identical  at the point $q\in \partial \Omega_g$, we have
 $\widehat{f_2}= \widehat{f_1}$, equivalently, $f_2=f_1$. 
 \end{proof}




\section{Proper mapping from annulus to disk} \label{annu-disk}

We now focus on a special case of Theorem \ref{proper-m-d}, that is, $\Omega_g$ is an annulus. 
Recall that  $r\in(0,1)$, $\mathbb{A}_r=\{z\in \mathbb{C};  r<|z|<1\}$ and $C_r=\{|z|=r\}$ be the inner boundary of $\mathbb{A}_r$.  Let $p_1, p_2, \cdots,  p_e$ be $e\geq 2$ points in 
$\mathbb{A}_r$, not necessarily distinct.  Let $\delta\in[1,e)$ be an integer.

\begin{thm}\label{proper-a-d}  There is a  proper holomorphic map   $f: \mathbb{A}_r \rightarrow \mathbb{D}$ of degree 
$e$ with $f^{-1}(0)=\{p_1, \cdots, p_e\}$ and ${\deg}(f|_{C_r})=\delta$  if and only if 
$$|p_1p_2\cdots p_e|=r^{\delta}.$$ 
When exists, the proper map $f$ is unique if we further require that $f(1)=1$.
\end{thm}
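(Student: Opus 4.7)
The plan is to derive Theorem \ref{proper-a-d} as a direct specialization of Theorem \ref{proper-m-d} to the case $g=1$, $\Omega_g=\mathbb{A}_r$. In that setup the two boundary curves are $\gamma_0=\partial\mathbb{D}$ and $\gamma_1=C_r$, with prescribed degrees $d_0=e-\delta$ and $d_1=\delta$, so the whole task reduces to identifying the harmonic function $u_1$ on $\mathbb{A}_r$ characterized by $u_1|_{C_r}=1$ and $u_1|_{\partial\mathbb{D}}=0$, and then rewriting the single equation $\sum_j u_1(p_j)=\delta$ in elementary terms.

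First I would note that, by uniqueness of the solution of the Dirichlet problem on $\mathbb{A}_r$ with these boundary data, one simply has
\[
u_1(z)=\frac{\log|z|}{\log r}, \qquad z\in \mathbb{A}_r,
\]
since the right-hand side is harmonic (as the real part of a holomorphic branch of $(\log r)^{-1}\log z$, locally) and takes the correct boundary values. Then the criterion from Theorem \ref{proper-m-d} reads
\[
\sum_{j=1}^{e}\frac{\log|p_j|}{\log r}=\delta,
\]
which, after multiplying by $\log r<0$ and exponentiating, is exactly $|p_1p_2\cdots p_e|=r^{\delta}$. This gives the equivalence asserted in the theorem.

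For the uniqueness statement I would invoke the last clause of Theorem \ref{proper-m-d}, applied at the boundary point $q=1\in \partial \mathbb{A}_r$: any two proper holomorphic maps $f_1,f_2:\mathbb{A}_r\to\mathbb{D}$ sharing the same zero divisor and the same boundary degrees must differ by a constant after passing to the Schottky double, and if both satisfy $f(1)=1$ that constant is $1$, so $f_1=f_2$. It is worth checking that the normalization $f(1)=1$ fixes the rotational ambiguity $f\mapsto e^{i\theta}f$ that is otherwise free in the annulus-to-disk setting, which is indeed what the uniqueness clause of Theorem \ref{proper-m-d} accomplishes through the choice of $q$.

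There is no serious obstacle in this argument; the work is entirely concentrated in the already-proved Theorem \ref{proper-m-d}. The only mildly subtle point to double-check is the identification of the correct degree $\delta$ on $C_r$ (rather than $e-\delta$ on $\partial\mathbb{D}$) as the integer appearing in the harmonic-sum condition; this is dictated by our convention $\gamma_1=C_r$, and will be fixed by the orientation conventions adopted for $H_1(S_g)$ in the proof of Theorem \ref{proper-m-d}. The explicit product representation of $f$ stated in Theorem \ref{properAD} is a separate matter, to be constructed in Section \ref{model}, and is not needed for the existence/uniqueness assertion handled here.
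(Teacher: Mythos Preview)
Your proposal is correct and follows essentially the same approach as the paper: specialize Theorem \ref{proper-m-d} to $g=1$ with $\gamma_1=C_r$, identify the harmonic function as $u_1(z)=\log|z|/\log r$, and read off both the equivalence and the uniqueness. The paper's proof is just a terser version of what you wrote.
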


\begin{proof}  By Theorem \ref{proper-m-d}, the  proper holomorphic map as required exists if and only if 
$$u(p_1)+\cdots+u(p_e)=\delta,$$
where $u$ is a harmonic function on $\mathbb{A}_r$ with $u|_{C_r}=1$ and $u|_{\partial{\mathbb{D}}}=0$. It's  easy to observe that 
$u(z)=\log |z|/\log r$.  This gives the required  equivalent condition $|p_1p_2\cdots p_e|=r^{\delta}.$ The uniqueness also follows from  Theorem \ref{proper-m-d}.
\end{proof}

\begin{rmk}  An alternative proof of the `only if' part of Theorem \ref{proper-a-d} goes as follows:  we define 
$$g(w)=\prod_{f(z)=w}z,  \ w\in \mathbb{D}$$
where the product is taken counted multiplicity.    It's clear that $g$ is continuous and non-vanishing on $\mathbb{D}$, holomorphic except at critical values. By the removable singularity theorem,  $g:\mathbb{D}\rightarrow\mathbb{C}^*$ is holomorphic. Applying the maximum principle to $g$ and $1/g$,  we get
$$|g(w)|\leq \max_{\zeta\in \partial \mathbb{D}}|g(\zeta)|, \  |1/g(w)|\leq \max_{\zeta\in \partial \mathbb{D}}|1/g(\zeta)|.$$
Since $g$ is proper, we have  $|g(\zeta)|=r^\delta$ for all $\zeta\in  \partial \mathbb{D}$.  Therefore $g$ is a constant map and $|g(w)|=r^\delta$ for all $w\in\mathbb{D}$. In particular, $|g(0)|=|p_1p_2\cdots p_e|=r^\delta$.
\end{rmk}

\begin{thm}\label{model-map} A  proper holomorphic map   $f: \mathbb{A}_r \rightarrow \mathbb{D}$ of degree 
$e$ with 
$$f^{-1}(0)=\{p_1, \cdots, p_e\}, \ f(1)=1 \text{\ and }  {\deg}(f|_{C_r})=\delta$$ can be written uniquely as
$$f(z)= {z^{-\delta}} B_{0}(z)\prod_{j\geq 1} \Big(B_{j}(z)B_{-j}(z)\Big),  \  z\in  \mathbb{A}_r ,$$
where 
$$B_{j}(z)= \Bigg(\prod_{k=1}^e\frac{1-\overline{p_k}r^{2j}}{1-{p_k} r^{2j}}\Bigg) \Bigg(\prod_{k=1}^e  \frac{z-p_k r^{2j}}{1-\overline{p_k} r^{2j}z} \Bigg), \ j\in\mathbb{Z}.$$
\end{thm}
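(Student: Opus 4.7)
The existence and uniqueness (once we fix $f(1)=1$) of the proper holomorphic map $f:\mathbb A_r\to\mathbb D$ with the prescribed data is already provided by Theorem~\ref{proper-a-d}. The plan is therefore to \emph{verify} that the right-hand side of the asserted formula defines a function $\hat f$ satisfying the same list of properties, and then to invoke that uniqueness to conclude $\hat f=f$. Uniqueness of the representation will follow automatically, since the ingredients $r,p_1,\ldots,p_e,\delta$ are all recoverable from $f$.

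First I establish convergence. A short asymptotic computation gives
\[
B_j(z)=z^{e}\bigl(1+O(r^{2j})\bigr)\ \ (j\to+\infty),\qquad B_{-j}(z)=z^{-e}\bigl(1+O(r^{2j})\bigr)\ \ (j\to+\infty),
\]
uniformly on compact subsets of $\mathbb A_r$, hence $B_j(z)B_{-j}(z)=1+O(r^{2j})$ and the infinite product converges absolutely and locally uniformly to a meromorphic function on $\mathbb A_r$. The zero/pole locations are then immediate from the factorization: for $|j|\ge 1$ the zeros $p_k r^{2j}$ of $B_j$ (resp.\ of $B_{-j}$) lie in $\{|z|<r\}$ (resp.\ $\{|z|>1\}$), and the poles lie outside $\mathbb A_r$ for the same reason; the factor $z^{-\delta}$ has its only singularity at $0$. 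Thus inside $\mathbb A_r$ the only zeros of $\hat f$ are the $p_k$, contributed by $B_0$ with the correct multiplicities, and $\hat f$ is holomorphic on $\mathbb A_r$.

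Next I check that $|\hat f|\equiv 1$ on $\partial\mathbb A_r$. On $\partial\mathbb D$ this is immediate from the Blaschke structure: each factor $(z-a)/(1-\bar a z)$ has modulus one when $|z|=1$, and each normalizing constant $\prod_k(1-\overline{p_k}\,r^{2j})/(1-p_k r^{2j})$ is a ratio of complex conjugates and therefore unimodular. On $C_r$ the identity $\bar z=r^{2}/z$ converts $|1-\overline{p_k}\,r^{2j}z|$ into $r^{-1}|z-p_k r^{2(j+1)}|$, which yields the telescoping identity
\[
|B_j(z)|=r^{e}\prod_{k=1}^{e}\frac{|z-p_k r^{2j}|}{|z-p_k r^{2(j+1)}|}\qquad (|z|=r).
\]
The symmetric partial product $\prod_{|j|\le N}|B_j(z)|$ telescopes to $r^{e(2N+1)}\prod_k |z-p_k r^{-2N}|/|z-p_k r^{2(N+1)}|$, and letting $N\to\infty$ the limit equals $r^{e}\,|p_1\cdots p_e|/|z|^{e}=r^{e}\cdot r^{\delta}/r^{e}=r^{\delta}$, where the hypothesis $|p_1\cdots p_e|=r^{\delta}$ supplied by Theorem~\ref{proper-a-d} is exactly what makes things close up. Combined with $|z^{-\delta}|=r^{-\delta}$ this gives $|\hat f|=1$ on $C_r$.

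With these facts $\hat f$ extends continuously to $\overline{\mathbb A_r}$, maps both boundary circles into $\partial\mathbb D$, and has exactly $e$ zeros inside; so $\hat f:\mathbb A_r\to\mathbb D$ is a proper holomorphic map of degree $e$ with $\hat f(1)=1$ (the last equality because $B_j(1)=1$ for every $j$ and $z^{-\delta}|_{z=1}=1$). Applying the uniqueness clause of Theorem~\ref{proper-a-d} gives $\hat f=f$. The main technical obstacle of this plan is the boundary computation on $C_r$: the $\partial\mathbb D$ identity and the zero/pole bookkeeping are essentially formal, but the telescoping on $C_r$ is precisely where the arithmetic hypothesis $|p_1\cdots p_e|=r^\delta$ enters and glues the countable product of Blaschke-type factors to the monomial $z^{-\delta}$.
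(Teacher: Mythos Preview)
Your argument is correct and takes a genuinely different route from the paper's proof. The paper extends the infinite product to a function on all of $\mathbb{C}^*$, proves the modular identity $f_\infty(r^2z)=f_\infty(z)$ (this is where the hypothesis $|p_1\cdots p_e|=r^{\delta}$ enters for them), and then combines modularity with the reflection symmetry $f_\infty(1/\bar z)=1/\overline{f_\infty(z)}$ to transport the boundary condition $|f_\infty|=1$ from $\partial\mathbb D$ to $C_r$; finally they compute $\deg(f_\infty|_{C_r})=\delta$ via the argument principle. You instead stay on $\overline{\mathbb A_r}$, verify $|\hat f|=1$ on $C_r$ by a direct telescoping calculation (which makes the role of $|p_1\cdots p_e|=r^{\delta}$ very transparent), and sidestep the boundary-degree computation entirely by appealing to the uniqueness clause of Theorem~\ref{proper-a-d}; that clause indeed only requires matching the zero set and one boundary value, since the ratio of two such maps extends to a nowhere-zero holomorphic function on the Schottky double. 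Your approach is more elementary and self-contained on the annulus; the paper's approach has the payoff of exhibiting the modular structure, which they record as a separate remark.

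One small point to tighten: you state convergence ``uniformly on compact subsets of $\mathbb A_r$'' but then use the continuous extension to $\overline{\mathbb A_r}$ and evaluate $|\hat f|$ on $C_r$. Your own estimate $B_j(z)B_{-j}(z)=1+O(r^{2j})$ in fact holds uniformly on the \emph{closed} annulus $\overline{\mathbb A_r}$ (the zeros and poles of every $B_{\pm j}$ with $|j|\ge 1$ lie strictly outside $\overline{\mathbb A_r}$), so the infinite product converges uniformly there and the boundary computation is justified; you should say this explicitly.
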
 
\begin{proof} Write 
$$f_N(z)={z^{-\delta}} B_{0}(z)\prod_{j=1}^N \Big(B_{j}(z)B_{-j}(z)\Big), N\in \mathbb{N}\cup \{\infty\}.$$
In order to show $f=f_\infty$, we need to verify that $f_\infty|_{\mathbb{A}_r}$ is a proper holomorphic map
from $\mathbb{A}_r$ onto $\mathbb{D}$
of degree $e$, fixing $1$ and having the same zero set and the boundary degree  as $f$. Note that by Theorem \ref{proper-a-d}, we know that $|p_1\cdots p_e|=r^\delta$. The proof proceeds in four steps:

  \vspace{6 pt} 

\textbf{Step 1. } {\it  $f_N$ converges locally and uniformly to $f_{\infty}$ on $\mathbb{C}^*$, as $N\rightarrow \infty$.} 

\vspace{6 pt} 

We first show that $f_N$ converges   uniformly to $f_{\infty}$ on $\overline{\mathbb{A}}_r$.
In fact, it is not hard to see that there is an integer $M=M(r)>0$ and a constant $c=c(r)>0$, such
that when $N\geq M$,
$$|B_{N}(z)B_{-N}(z)-1|\leq c r^{2N},  \  \forall \  z\in \overline{\mathbb{A}}_r.$$
Therefore 
$$|f_\infty(z)-f_N(z)|= |f_N(z)||1-\prod_{j=N+1}^\infty \big(B_{j}(z)B_{-j}(z)\big)|\leq C r^{2N},$$
where $C$ is a constant, dependent  only on $r$. The uniform convergence on $\overline{\mathbb{A}}_r$ follows immediately. 
With the same argument, one can show that $f_N$ converges  uniformly to $f_{\infty}$ on $r^{2k}\overline{\mathbb{A}}_r$  for any $k\in\mathbb{Z}$.

Note that $\mathbb{C}^*=\bigcup_{k\in\mathbb{Z}}r^{2k}(\overline{\mathbb{A}_r\cup\mathbb{A}^*_r})$, where $\mathbb{A}^*_r=
\{z\in\mathbb{C}; 1/\bar{z}\in \mathbb{A}_r\}$, and that for any $N$, the map $f_N$ satisfies  $1/\overline{f_N(z)}= f_N(1/\bar{z})$. Therefore $f_N: \mathbb{C}^*\rightarrow \mathbb{\widehat{C}}$ converges locally and uniformly, in the spherical metric,  to $f_\infty:\mathbb{C}^*\rightarrow \mathbb{\widehat{C}}$.
Moreover, the map $f_{\infty}$ also satisfies $1/\overline{f_\infty(z)}= f_\infty(1/\bar{z})$.

  \vspace{6 pt} 

\textbf{Step 2. }   {\it  $f_{\infty}: \mathbb{C}^*\rightarrow \mathbb{\widehat{C}}$ is modular:   $f_\infty(r^2z)=f_\infty(z)$.} 

  \vspace{6 pt} 

  Fix  a compact subset $K\subset \mathbb{C}^*$,    one may verify that for any $z\in K$, 
\bess \frac{f_N(r^2z)}{f_N(z)}&=&\frac{1}{r^{2\delta}} \prod_{k=1}^e  \frac{(z-{p_k} r^{-2(N+1)}) (z-\overline{p_k}^{-1} r^{2N})}
{(z-\overline{p_k}^{-1} r^{-2(N+1)}) (z-{p_k} r^{2N})}\\
&\rightarrow&  \frac{|p_1\cdots p_e|^2}{r^{2\delta}} =1 \text{ as } N\rightarrow \infty. \eess
By Step 1, we have $f_\infty(r^2z)=f_\infty(z)$ for all $z\in K$. By the identity theorem for holomorphic maps, the equality holds for all $z\in \mathbb{C}^*$.

  \vspace{6 pt} 

\textbf{Step 3. }   {\it  $f_{\infty}|_{ \mathbb{A}_r}:   \mathbb{A}_r\rightarrow \mathbb{D} $ is proper.} 

  \vspace{6 pt} 

Observe that when  $|z|=1$, we have $|f_\infty(z)|=1$. By the symmetry $f_\infty(1/\overline{z})=1/ \overline{f_\infty(z)}$ and the identity $f_\infty(r^2z)=f_\infty(z)$, we have 
$$1/\overline{f_\infty(z)}= f_\infty(r^2/\overline{z}).$$
This implies that when $|z|=r$, we have $|f_\infty(z)|=1$. 

Since $f_\infty|_{ \mathbb{A}_r}: \mathbb{A}_r\rightarrow \mathbb{C}$ is a  non-constant  holomorphic function,  by the maximum modulus principle, we have
$$|f_\infty(z)|<\max_{\zeta\in \partial\mathbb{A}_r} |f_\infty(\zeta)|=1, \ \forall z\in \mathbb{A}_r.$$

These properties imply that $f_\infty(\mathbb{A}_r)=\mathbb{D}$ and $f_\infty|_{ \mathbb{A}_r}: \mathbb{A}_r\rightarrow \mathbb{D}$ is a proper map. The degree of $f_\infty|_{ \mathbb{A}_r}$, which can be seen from the fact $(f_\infty|_{ \mathbb{A}_r})^{-1}(0)=\{p_1, \cdots, p_e\}$, is exactly $e$. 

  \vspace{6 pt} 

\textbf{Step 4. }   {\it  The boundary degree   $\deg(f_\infty|_{C_r})=\delta$.} 

  \vspace{6 pt} 

 The degree can be obtained by the argument principle 
\bess 
\deg(f_\infty|_{\partial \mathbb{D}})&=&\frac{1}{2\pi} \int_{\partial \mathbb{D}} d \arg f_\infty(z)=\frac{1}{2\pi i} \int_{\partial \mathbb{D}} \frac{f_\infty'(z)}{f_\infty(z)}dz \\
&=& \lim_{N\rightarrow \infty} \frac{1}{2\pi i} \int_{\partial \mathbb{D}} \frac{f_N'(z)}{f_N(z)}dz\ \ \ (\text{by uniform convergence})   \\
&=&\lim_{N\rightarrow \infty} [(eN+e)-(eN+\delta)] \text{ (by argument principle)}\\ 
&=&e-\delta.
\eess 
Therefore $\deg(f_\infty|_{C_r})=e-\deg(f_\infty|_{\partial \mathbb{D}})=\delta.$
\end{proof}  

\begin{rmk} A  byproduct of the proof of Theorem \ref{model-map}  is the following fact:
Given $e$ points  $p_1, \cdots, p_e\in \mathbb{A}_r$, define $\phi: \mathbb{C}^*\rightarrow \mathbb{\widehat{C}}$  by 
$$\phi(z)= {z^{-\delta}} B_{0}(z)\prod_{j\geq 1} \Big(B_{j}(z)B_{-j}(z)\Big),$$
then the restriction $\phi|_{\mathbb{A}_r}$ is  a proper holomorphic map from $\mathbb{A}_r$ onto $\mathbb{D}$  if and only if $|p_1\cdots p_e|=r^\delta$. 
\end{rmk}

\section{Model space} \label{model}

Let $S$ be a topological space,  $S^{(e)}$ be the {\it $e$-fold symmetric product space} of $S$,  consisting of all   unordered  $e$-tuples  $\{z_1, \cdots, z_e\}$ on $S$,    not necessarily distinct.  The space $S^{(e)}$ is endowed the quotient topology with respect to the projection 
$$p_S: S^e \rightarrow S^{(e)}, \ (z_1, \cdots, z_e)\mapsto \{z_1, \cdots, z_e\}.$$

Given a number $r\in (0,1)$, two  integers $e>\delta>0$,  and  $\{p_1, \cdots, p_e\}\in \mathbb{A}_r^{(e)}$ with $|p_1\cdots p_e|=r^{\delta}$, it is known from Theorem \ref{proper-a-d} that there is a unique proper holomorphic map $f: \mathbb{A}_r \rightarrow \mathbb{D}$ of degree 
$e$ with 
$$f^{-1}(0)=\{p_1, \cdots, p_e\}, \ f(1)=1 \text{\ and }  {\deg}(f|_{C_r})=\delta.$$
Therefore, there is a bijection between the set $\boldsymbol{A}(e,\delta)$ and
 $$Z=\{(r, \{p_1, \cdots, p_e\}); r\in (0,1),  \{p_1, \cdots, p_e\}\in \mathbb{A}_r^{(e)},  |p_1\cdots p_e|=r^{\delta}\}.$$
  Clearly, $Z$ is a topological subspace of $(0,1)\times  (\mathbb{C}^*)^{(e)}$.

\begin{lem}\label{top} The following bijection is a homeomorphism
$$\omega:  
\begin{cases}\boldsymbol{A}(e,\delta) \rightarrow Z , \\
f\mapsto (r_f, f^{-1}(0)).
\end{cases}$$
\end{lem}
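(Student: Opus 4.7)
The plan is straightforward: bijectivity of $\omega$ is already delivered by Theorem \ref{proper-a-d}, so the work is entirely in showing that $\omega$ and $\omega^{-1}$ are both continuous. The two directions rely on rather different tools: Hurwitz's theorem for one, and the explicit Blaschke-product formula of Theorem \ref{model-map} for the other.

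For continuity of $\omega$, suppose $f_k \to f$ in $\boldsymbol{A}(e,\delta)$. By Definition \ref{modelAD} this gives $r_{f_k} \to r_f$ and $f_k \to f$ uniformly on compact subsets of $\mathbb{A}_{r_f}$. Applying Hurwitz's theorem on small disks centered at each zero of $f$ shows that, for $k$ large, $f_k$ has exactly the right multiplicity of zeros near each point of $f^{-1}(0)$. This is precisely convergence of $f_k^{-1}(0)$ to $f^{-1}(0)$ in the symmetric product $(\mathbb{C}^*)^{(e)}$, so $\omega$ is continuous.

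For continuity of $\omega^{-1}$, I would use the explicit expression from Theorem \ref{model-map}. Each factor
$$B_j(z; r, p_1, \ldots, p_e) = \Bigg(\prod_{k=1}^e \frac{1 - \overline{p_k} r^{2j}}{1 - p_k r^{2j}}\Bigg) \Bigg(\prod_{k=1}^e \frac{z - p_k r^{2j}}{1 - \overline{p_k} r^{2j} z}\Bigg)$$
is symmetric in $(p_1, \ldots, p_e)$, hence well defined on $Z$, and depends continuously on the parameters and holomorphically on $z$ away from the explicit poles. If $(r^{(k)}, \boldsymbol{p}^{(k)}) \to (r, \boldsymbol{p})$ in $Z$ and $K \subset \mathbb{A}_r$ is compact, then $K \subset \mathbb{A}_{r^{(k)}}$ for all large $k$, each finite partial product converges uniformly on $K$, and the tail estimate $|B_j B_{-j} - 1| \leq C s^{2j}$ used in the proof of Theorem \ref{model-map} upgrades this to uniform convergence of the whole infinite product on $K$. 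It follows that $\omega^{-1}(r^{(k)}, \boldsymbol{p}^{(k)}) \to \omega^{-1}(r, \boldsymbol{p})$ in the topology of Definition \ref{modelAD}.

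The main obstacle is showing that the constants $C > 0$ and $s < 1$ in the tail bound can be chosen \emph{uniformly} for $(r, \boldsymbol{p})$ in a neighborhood of the limit point and for $z \in K$. This should be manageable because, in a small neighborhood in $Z$, the radii stay in a compact subinterval of $(0,1)$ and the poles $p_k r^{2j}$, $1/(\overline{p_k} r^{2j})$ remain uniformly bounded away from $K$; but carefully tracking these estimates so that the infinite product converges uniformly in the parameters is the key technical point of the argument.
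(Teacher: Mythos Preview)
The paper does not actually prove this lemma: it states that ``the proof of Lemma \ref{top} is left to the readers.'' So there is nothing to compare your argument against, and the relevant question is simply whether your outline is correct. It is.

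Your treatment of the continuity of $\omega$ via Hurwitz's theorem is the natural one and works as stated; since each $f_k$ has exactly $e$ zeros counted with multiplicity, the local Hurwitz count near each zero of $f$ accounts for all of them and forces convergence in the symmetric product.

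For the continuity of $\omega^{-1}$, your use of the explicit product from Theorem \ref{model-map} is also correct, and you have identified the one genuine technical point: the tail bound $|B_j B_{-j}-1|\leq C s^{2j}$ must be made uniform in the parameters. This is indeed routine once you restrict to a small neighborhood in $Z$: the radii $r$ then lie in a compact subinterval of $(0,1)$, the zeros $p_k$ stay in a compact subset of $\mathbb{A}_r$, and on a fixed compact $K\subset\mathbb{A}_r$ the quantities $|p_k r^{2j}|$, $|\overline{p_k}r^{2j}z|$, $|p_k r^{-2j}/z|$ are all bounded by a fixed constant times $s^{2j}$ for some $s<1$ independent of the parameters. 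So the obstacle you flag is real but entirely surmountable.

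An alternative you might find cleaner for $\omega^{-1}$: extend each $f_k$ by Schwarz reflection across $\partial\mathbb{D}$ to a holomorphic map on $\{r_k<|z|<1/r_k\}$, use Montel's theorem to pass to a locally uniform subsequential limit $g$ on $\{r<|z|<1/r\}$, and observe that $g|_{\mathbb{A}_r}$ is proper onto $\mathbb{D}$ with the correct zero divisor and $g(1)=1$; uniqueness in Theorem \ref{proper-a-d} then forces $g=f$, and hence the full sequence converges. This sidesteps the uniform tail estimate entirely, at the cost of invoking reflection and a compactness argument.
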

 For this, we will not distinguish the spaces $\boldsymbol{A}(e,\delta)$, $Z$ in the following discussion.
 The proof of Lemma \ref{top} is left to the readers.


%
%

%
%

The aim of this section is to study the topology of the   space $\boldsymbol{A}(e,\delta)$.  Before that, we first look at   an example, in order to have a picture in mind.

\subsection{Example: dynamically meaningful M\"obius band and  toroid}

Fix  $r\in(0,1)$,  let's consider the space  $\boldsymbol{A}_r(2,1)$  of all proper holomorphic maps $f: \mathbb{A}_r \rightarrow \mathbb{D}$ of degree two with $f(1)=1$.
In this case,    
$$\boldsymbol{A}_r(2,1)=\{\{p_1,p_2\}\in \mathbb{A}_r^{(2)};  |p_1 p_2|=r\}.$$

In the following, we shall give a description of   $\boldsymbol{A}_r(2,1)$. We will see that $\boldsymbol{A}_r(2,1)$ is actually a  3-D solid torus or a {\it toroid},  containing a  M\"obius band as a dynamically  meaningful subspace. 

Set $p_1= r^{\rho_1}e^{2\pi i t_1},  p_2= r^{\rho_2}e^{2\pi i t_2}$, where $\rho_1, \rho_2\in (0,1)$, $t_1, t_2\in S^1=\mathbb{R}/\mathbb{Z}$.

By changing coordinates,  the space $\boldsymbol{A}_r(2,1)$, viewed as  a set, can be identified as the union of the following two sets
$$ \boldsymbol{A}^{L}_r(2,1)=\{(\rho_1, \rho_2, t_1, t_2)\in (0,1)^2\times \mathbb{T}^2; \rho_1>\rho_2, \rho_1+\rho_2=1\},$$
$$\boldsymbol{A}^{B}_r(2,1)=\{({1} /{2}, {1}/{2}, t_1, t_2);  (t_1, t_2)\in  \mathbb{T}^2\}  \text { modulo order in } t_1, t_2. $$
Each element in $\boldsymbol{A}^{L}_r(2,1)$ is determined  by a triple $(\rho_1,  t_1, t_2)\in (1/2,1)\times \mathbb{T}^2$.  Therefore $\boldsymbol{A}^{L}_r(2,1)$ is homeomorphic to $ (1/2,1)\times \mathbb{T}^2$.    
Note that each map $f$ in  $\boldsymbol{A}^{L}_r(2,1)$ is {\it leaned} in the sense that the two pre-images of $0$ have different  moduli.

The set $\boldsymbol{A}^{B}_r(2,1)$  consists of {\it balanced} maps $f$, in the sense that  the two pre-images of $0$ have same  moduli.  As a topological space,  $\boldsymbol{A}^{B}_r(2,1)$ is homeomorphic to ${S^1}^{(2)}$.
  To visualize ${S^1}^{(2)}$, we identify each point of $S^1$ with $e^{2\pi i t}$. Define the symmetric function ${\rm sym}_2: {S^1}^{(2)}\rightarrow  \mathbb{C}^2$ by
$${\rm sym}_2(\{e^{2\pi i t_1}, e^{2\pi i t_2}\})=(e^{2\pi i t_1}+e^{2\pi i t_2}, e^{2\pi i t_1} \cdot e^{2\pi i t_2}).$$
Clearly ${\rm sym}_2$ is injective.
Let $u=t_1+t_2, v=t_1-t_2$, then 
$${\rm sym}_2(\{e^{2\pi i t_1}, e^{2\pi i t_2}\})=e^{\pi i u}(2\cos(\pi v), e^{\pi i u}).$$
Therefore the image ${\rm sym}_2 ( {S^1}^{(2)})$ can be parameterized by the  parameters $(u,v)$, and it is homeomorphic to the image  of $\gamma:  [0,1]\times [-1,1]\rightarrow \mathbb{R}^3$
$$\gamma(u, v)  
 =\left(
\begin{array} {c}
(2+\cos(\pi v)\cos(\pi u))\cos (2\pi u )\\
(2+\cos(\pi v)\cos(\pi u))\sin (2\pi u )\\
\cos(\pi v)\sin(\pi u) 
\end{array}
\right).$$
 The image of $\gamma$   is  foliated by $\cup_{u\in [0,1]} \gamma(u,[-1,1])$.
To understand this graph,  consider a line segment $\ell=[-1,1]$  in $\mathbb{R}^3$ moving along the  round circle of radius $2$ centered at the origin  in the $xy$-plane.  At each point 
$z_u=(2\cos (2\pi u), 2\sin(2\pi u), 0)$,  the line segment $\ell$ is  perpendicular to the circle and with plane angle  $\pi u$, with the midpoint of $\ell$  exactly $z_u$.   One may find that the image of $\gamma$ is exactly a
M\"obius band.

\begin{figure}[htpb]
  \setlength{\unitlength}{1mm}
  \centering{
 \includegraphics[width=80mm]{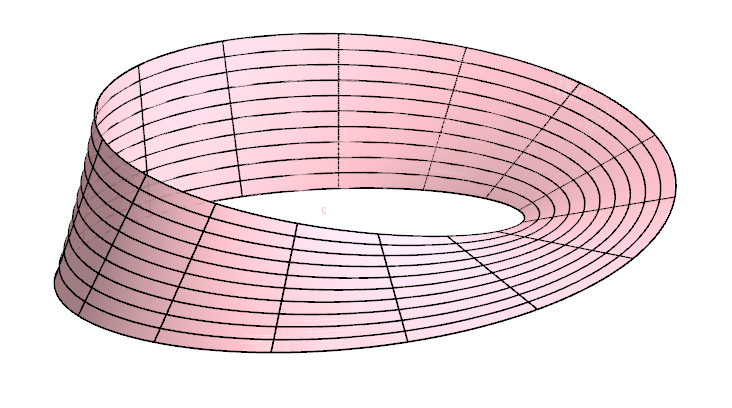} }
 \caption{The space $\boldsymbol{A}^{B}_r(2,1)$ of {\it balanced} proper holomorphic maps $f: \mathbb{A}_r \rightarrow \mathbb{D}$ of degree two with $f(1)=1$,
  is naturally homeomorphic to a M\"obius band.}
 \end{figure}

Finally, to visualize    $\boldsymbol{A}_r(2,1)$, we glue the  inner boundary $\mathcal{B}_{in}=\{({1}/{2}, {1}/{2}, \\ t_1, t_2);  t_1, t_2 \in S^1\}$
 of $\boldsymbol{A}^{L}_r(2,1)$ 
so that the  two points $({1}/{2}, {1}/{2}, t_1, t_2)$ and $({1}/{2}, {1}/{2}, t_2, t_1)$  collapse to one point. In this way,  $\mathcal{B}_{in}$ collapses to  a M\"obius band, as shown above. It then turns out that $\mathcal{B}_{in}\cup\boldsymbol{A}_r^L(2,1)$     collapses to a toroid, giving the  topology of $\boldsymbol{A}_r(2,1)$.  

The rigorous proof of these intuitive descriptions is the task of next part.

\subsection{Global topology of model space} \label{global-top}

The following result says that  global topology of   $\boldsymbol{A}(e,\delta)$  is very standard.

\begin{thm}\label{model-space} The model space   $\boldsymbol{A}(e,\delta)$ is homeomorphic to 
$S^1\times \mathbb{R}^{2e-1}.$\end{thm}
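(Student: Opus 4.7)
The strategy is to use Lemma \ref{top} to identify $\boldsymbol{A}(e,\delta)$ with the parameter space $Z$, then show that the ``total argument'' map
$$\Phi:Z\to S^1,\quad (r,\{p_j\})\mapsto \arg(p_1\cdots p_e)/(2\pi)$$
exhibits $Z$ as a trivial fiber bundle over $S^1$ with fiber $\mathbb{R}^{2e-1}$. Introducing polar-logarithmic coordinates $p_j=r^{\rho_j}e^{2\pi i t_j}$ with $\rho_j\in(0,1)$ and $t_j\in S^1$, the defining constraint $|p_1\cdots p_e|=r^\delta$ becomes the linear equation $\sum_j\rho_j=\delta$, and $\Phi$ reads $\sum_j t_j\pmod 1$.

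First I would show that $\Phi$ is a locally trivial fiber bundle: the ``fractional rotation'' action $\beta\cdot(r,\{p_j\})=(r,\{e^{2\pi i\beta/e}p_j\})$ for $\beta\in\mathbb{R}$ shifts $\Phi$ by $\beta$ and provides local trivializations. A continuous global section $s:S^1\to Z$ is given by the rotationally symmetric configuration
$$s(\theta)=(r_0,\{r_0^{\delta/e}e^{2\pi i(\theta+j-(e+1)/2)/e}\}_{j=1}^{e})$$
for any fixed $r_0\in(0,1)$: as $\theta$ winds once, each point rotates by $e^{2\pi i/e}$, cyclically permuting the tuple and preserving the unordered set, while the identity $\sum_{j=1}^{e}(j-(e+1)/2)=0$ gives $\Phi\circ s=\mathrm{id}_{S^1}$. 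Thus $Z$ is a fiber bundle over $S^1$ with fiber $F=\Phi^{-1}(0)$.

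The core task is to identify $F\cong\mathbb{R}^{2e-1}$. Using elementary symmetric polynomial coordinates together with $r$, the fiber $F$ is identified with the open subset of $(0,1)\times\mathbb{C}^{e-1}\cong\mathbb{R}^{2e-1}$ consisting of tuples $(r,\sigma_1,\ldots,\sigma_{e-1})$ for which all roots of $z^e-\sigma_1 z^{e-1}+\cdots+(-1)^e r^\delta=0$ lie in $\mathbb{A}_r$. I would prove this open subset is homeomorphic to $\mathbb{R}^{2e-1}$ by constructing a deformation retraction onto the canonical tuple $p_j^{\mathrm{can}}=r^{\delta/e}e^{2\pi i(j-(e+1)/2)/e}$ via the log-linear homotopy $p_j(s)=\exp((1-s)\log p_j+s\log p_j^{\mathrm{can}})$: this homotopy keeps $p_j(s)\in\mathbb{A}_r$ because the strip $\{w:\log r<\mathrm{Re}\,w<0\}$ is convex, and it preserves $\prod p_j(s)=r^\delta$ because the sum of real parts of the log is fixed. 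Once $F\cong\mathbb{R}^{2e-1}$ is established, the bundle is trivial since its monodromy (rotation by $e^{2\pi i/e}$ on $F$) is orientation-preserving and hence isotopic to the identity in $\mathrm{Homeo}(\mathbb{R}^{2e-1})$, concluding $\boldsymbol{A}(e,\delta)\cong S^1\times\mathbb{R}^{2e-1}$. The main obstacle will be promoting the deformation retract to a genuine homeomorphism $F\cong\mathbb{R}^{2e-1}$, since a contractible open manifold of dimension $\geq 3$ need not be Euclidean (e.g.\ the Whitehead manifold); I would address this by refining the log-linear retraction into a radial flow exhibiting $F$ as a star-shaped domain in suitable global coordinates, then appealing to the classical fact that any open star-shaped subset of $\mathbb{R}^N$ is homeomorphic to $\mathbb{R}^N$.
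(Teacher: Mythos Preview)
Your fiber-bundle strategy is genuinely different from the paper's direct construction, and the decisive step---identifying the fiber $F=\Phi^{-1}(0)$ with $\mathbb{R}^{2e-1}$---has real gaps. First, the log-linear homotopy $p_j(s)=\exp\big((1-s)\log p_j+s\log p_j^{\mathrm{can}}\big)$ is not well-defined on \emph{unordered} tuples: it requires pairing each $p_j$ with a particular $p_j^{\mathrm{can}}$, and no such pairing can be chosen continuously and $S_e$-equivariantly (consider what happens near the diagonal $p_i=p_j$). Second, even on ordered tuples the homotopy does not stay in $F$: fixing the sum of the real parts of the logarithms only gives $|\prod_j p_j(s)|=r^\delta$, while the sum of the imaginary parts moves linearly from some $2\pi k$ (with $k\in\mathbb{Z}$ depending on the branch choices) to $0$, so $\arg\big(\prod_j p_j(s)\big)$ is generically nonzero for $0<s<1$. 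Your claim ``it preserves $\prod p_j(s)=r^\delta$ because the sum of real parts of the log is fixed'' conflates modulus with the full complex value. Third, the proposed remedy---refining into a radial flow that exhibits $F$ as star-shaped ``in suitable global coordinates''---is precisely where all the content lies, and you have not produced those coordinates.

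The paper avoids these issues entirely by constructing an explicit homeomorphism. For each fixed $r$ it ``inflates'' $\mathbb{A}_r$ to $\mathbb{C}^*$ via $\phi_r(p)=\dfrac{|p|-r}{1-|p|}\cdot\dfrac{p}{|p|}$, then normalizes to land in $\mathcal{L}=\{\{\zeta_j\}\in(\mathbb{C}^*)^{(e)}:|\zeta_1\cdots\zeta_e|=1\}$; an explicit inverse is built from a monotone scalar equation. Since the elementary symmetric functions identify $\mathcal{L}\cong\mathbb{C}^{e-1}\times S^1$ on the nose (the last symmetric function $\zeta_1\cdots\zeta_e$ lies on $S^1$), one obtains $\boldsymbol{A}_r(e,\delta)\cong\mathbb{R}^{2e-2}\times S^1$, and adjoining the $r$-coordinate gives the theorem. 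If you wish to rescue the bundle picture, these are exactly the ``suitable global coordinates'' you need: take symmetric functions of the inflated, normalized points $\phi_r(p_j)/|\prod_k\phi_r(p_k)|^{1/e}$ rather than of the $p_j$ themselves; in those coordinates your fiber is literally $\mathbb{C}^{e-1}\times(0,1)$ and no contractibility-versus-Euclidean subtlety arises.
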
 
\begin{proof} The idea of the proof is  inspired by one way of finding a leak in a tire: 
first, inflate the tube,  then use the hissing noise to locate the hole.
Applying in our case, we first `inflate' the annulus $\mathbb{A}_r$ to the punctured plane $\mathbb{C}^*$, then
using   the symmetric function to detect the topology of symmetric product space, and their subspaces.

As a warm-up,
let's first consider the subspace $\mathcal{L}$ of $(\mathbb{C}^*)^{(e)}$:
$$\mathcal{L}=\{\{\zeta_1, \cdots, \zeta_e\}\in  (\mathbb{C}^*)^{(e)}; |\zeta_1\cdots \zeta_e|=1\}.$$
Observe that $\mathcal{L}$ can be identified to  $\mathbb{C}^{e-1}\times S^1$ by the symmetric function: 
$${\rm sym}_e:  \{\zeta_1, \cdots, \zeta_e\}  \mapsto  (c_1, \cdots, c_{e-1}, c_e)\in \mathbb{C}^{e-1}\times S^1, $$
where $c_k$ are defined in the following way
$$(z-\zeta_1)\cdots(z-\zeta_e)=z^e+\sum_{k=1}^e (-1)^k c_k z^{e-k}.$$

In the following,  we shall  show that fix any $r\in (0,1)$, the space 
$$\boldsymbol{A}_r(e,\delta)=\{\{p_1, \cdots, p_e\}\in \mathbb{A}_r^{(e)};  |p_1\cdots p_e|=r^{\delta}\}$$
 is homeomorphic to 
$S^1\times \mathbb{R}^{2e-2}.$
To this end, it suffices to prove that $\boldsymbol{A}_r(e,\delta)$ is homeomorphic to $\mathcal{L}$. Let $\phi_r: \mathbb{A}_r\rightarrow \mathbb{C}^*$ be the homeomorphism defined by
$$\phi_r(p)=\frac{|p|-r}{1-|p|}\cdot\frac{p}{|p|}.$$ 
Then we  define a map $\Phi_r: \boldsymbol{A}_r(e,\delta)\rightarrow \mathcal{L}$ by
$$\Phi_r(\{p_1, \cdots, p_e\})=\Bigg\{ \frac{\phi_r(p_1)} {|\phi_r(p_1)\cdots\phi_r(p_e)|^{1/e}}, \cdots,    \frac{\phi_r(p_e)} {|\phi_r(p_1)\cdots\phi_r(p_e)|^{1/e}}\Bigg\}.$$
It's clear that $\Phi_r$ is continuous.  To see that $\Phi_r$ is a homeomorphism,  we need construct  an inverse of $\Phi_r$.
To  do this,  for any $\{\zeta_1, \cdots, \zeta_e\}\in \mathcal{L}$,  consider  the function $g: (0, +\infty)\rightarrow \mathbb{R}$ defined by 
$$g(t)=\prod_{k=1}^e\frac{|\zeta_k|+t r}{|\zeta_k|+t}.$$
Observe that $g$ is monotonically decreasing and satisfies
 $$\lim_{t\rightarrow 0^+}g(t)=1,  \lim_{t\rightarrow +\infty}g(t)=r^e.$$
So there is a  unique positive number $t_0=t_0(\zeta_1,\cdots, \zeta_e)$  satisfying $g(t_0)=r^{\delta}$.
Consider the map $\Psi_r: \mathcal{L} \rightarrow \boldsymbol{A}_r(e,\delta)$  defined by
$$\Psi_r(\{\zeta_1, \cdots, \zeta_e\})=\Bigg\{ \frac{|\zeta_1|+t_0 r}{|\zeta_1|+t_0}\cdot\frac{\zeta_1}{|\zeta_1|}, \cdots,     \frac{|\zeta_e|+t_0 r}{|\zeta_e|+t_0}\cdot\frac{\zeta_e}{|\zeta_e|}\Bigg\}.$$
One may verify that $\Psi_r\circ \Phi_r=id_{\boldsymbol{A}_r(e,\delta)}$. This means that $\Phi_r$ is both injective and surjective, therefore a homeomorphism from  $\boldsymbol{A}_r(e,\delta)$ onto $\mathcal{L}$.

Finally, define the map  $H:\boldsymbol{A}(e,\delta)\rightarrow \mathbb{R}\times \mathbb{C}^{e-1}\times S^1\simeq \mathbb{R}^{2e-1}\times S^1$ by
$$H(r, \{p_1, \cdots, p_e\})=\bigg(\tan\Big(\big(r-\frac{1}{2}\big)\pi \Big),\  {\rm sym}_e \circ  \Phi_r (\{p_1, \cdots, p_e\})\bigg).$$
It is easy to see that $H$ is a homeomorphism.
\end{proof}

Now, for the model space   $\mathbf{M}_\sigma$ introduced in Section \ref{model-map0}, we have: 

\begin{cor} \label{model-space1} The model space  $\mathbf{M}_\sigma$  is homeomorphic to $\mathbb{R}^{4d-4-n}\times \mathbb{T}^n$.
\end{cor}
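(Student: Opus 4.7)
The plan is to obtain this corollary by direct assembly of the topological identifications already proved in this section. Since $\mathbf{M}_\sigma$ is defined as a Cartesian product of Blaschke-product model spaces and annulus-disk model spaces, I will compute the homeomorphism type of each factor separately and then multiply.

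First, by Lemma \ref{model-space0}, the Blaschke-product factors $\boldsymbol{B}^{\rm{fc}}_{d_1}$ and either $\boldsymbol{B}^{\rm{fc}}_{d_{n+1}}$ or $\boldsymbol{B}^{\rm{zc}}_{d_{n+1}}$ are homeomorphic to $\mathbb{R}^{2d_1-2}$ and $\mathbb{R}^{2d_{n+1}-2}$, respectively. In particular, the choice between the $\rm{fc}$ and $\rm{zc}$ normalization does not affect the homeomorphism type, so the three cases $\sigma\in\{I,II,III\}$ give the same answer. Second, by Theorem \ref{model-space}, each annulus-disk factor $\boldsymbol{A}(d_k+d_{k+1},\delta_k)$ is homeomorphic to $S^1\times\mathbb{R}^{2(d_k+d_{k+1})-1}$. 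Multiplying the $n$ such factors together contributes exactly $n$ circle factors, yielding a torus $\mathbb{T}^n$ alongside a Euclidean contribution.

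The only remaining step is to total the Euclidean dimensions. Using the elementary identity $\sum_{k=1}^n(d_k+d_{k+1}) = d_1 + 2(d_2+\cdots+d_n) + d_{n+1} = 2d - d_1 - d_{n+1}$, which follows from $|\boldsymbol{d}|=d$, the total exponent of $\mathbb{R}$ simplifies as
\begin{align*}
(2d_1-2)+(2d_{n+1}-2)+\sum_{k=1}^n\bigl(2(d_k+d_{k+1})-1\bigr)
 &= 2(d_1+d_{n+1})-4+2(2d-d_1-d_{n+1})-n \\
 &= 4d-4-n,
\end{align*}
yielding $\mathbf{M}_\sigma\simeq\mathbb{R}^{4d-4-n}\times\mathbb{T}^n$ as required.

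There is no real obstacle in this argument: all the analytic content has already been absorbed into Theorem \ref{model-space} and Lemma \ref{model-space0}, and what remains is a mechanical product-of-factors computation. The only point requiring any care is combinatorial book-keeping, specifically noting that the interior indices $d_2,\ldots,d_n$ each appear twice in $\sum_k(d_k+d_{k+1})$ while the endpoint indices $d_1$ and $d_{n+1}$ appear only once; this is what makes the exponent come out cleanly to $4d-4-n$ rather than something depending on the individual $d_k$'s.
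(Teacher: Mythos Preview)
Your proof is correct and follows exactly the same approach as the paper's own proof: invoke Lemma \ref{model-space0} and Theorem \ref{model-space} for the individual factors, then add up the Euclidean dimensions using $|\boldsymbol{d}|=d$. Your version is merely more explicit about the arithmetic, which the paper compresses into a single displayed line.
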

\begin{proof}
It's known from Lemma \ref{model-space0} and Theorem \ref{model-space}  that the model space $\mathbf{M}_\sigma$ is   homeomorphic to (note  that $d=d_1+\cdots+d_{n+1}$)
$$\mathbb{R}^{2d_1-2}\times \mathbb{R}^{2d_{n+1}-2}\times \prod_{k=1}^{n}\Big(\mathbb{R}^{2(d_{k}+d_{k+1})-1}\times S^1\Big) \simeq  \mathbb{R}^{4d-4-n}\times \mathbb{T}^n.$$  \end{proof}

 \section{The covering property of $\rho$}\label{cov}

 In this  section,  we shall prove the covering  property of  the map  $\rho: \mathcal{F}^{\chi_0}_{\sigma,\boldsymbol{d}}\rightarrow \mathbf{M}_\sigma$  (introduced in Section \ref{dynamics}) defined  by 
 $$\rho((f, \nu))= \boldsymbol{m}(f)=(m_{0}, m_{{\infty}}, m_{1}, \cdots, m_{{n}}). $$

\begin{thm} \label{covering-p}  The map  $\rho:  \mathcal{F}^{\chi_0}_{\sigma,\boldsymbol{d}}\rightarrow  \mathbf{M}_\sigma$ is a 
  covering map.   
\end{thm}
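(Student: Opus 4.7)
The plan is to establish the covering property by constructing, around each marked map $(f_0,\nu_0) \in \mathcal{F}^{\chi_0}_{\sigma,\boldsymbol{d}}$, a continuous local section of $\rho$ via quasi-conformal surgery in the style of Douady--Hubbard and McMullen. Fix $\boldsymbol{m}_0 := \rho(f_0,\nu_0) = (m_0, m_\infty, m_1, \ldots, m_n)$. The goal is to show that every perturbation $\boldsymbol{m}' = (m_0', m_\infty', m_1', \ldots, m_n')$ of $\boldsymbol{m}_0$ in $\mathbf{M}_\sigma$ is realized as $\rho(f',\nu')$ for a unique marked map $(f',\nu')$ close to $(f_0,\nu_0)$, depending continuously on $\boldsymbol{m}'$.

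The first step is to build a quasi-regular hybrid $g : \widehat{\mathbb{C}} \to \widehat{\mathbb{C}}$. On each Fatou component $D_j \in \boldsymbol{F}(f_0)$, transport the new model map $m_j'$ back through the uniformizing coordinate $\kappa_j$, replacing $f_0|_{D_j}$ with a perturbed holomorphic dynamics whose target is a slightly moved copy of $D_{\tau(j)}$. On the non-critical annuli $A_k$, where no critical points live, keep the degree-$d_k$ unbranched covering structure of $f_0|_{A_k}$ and smoothly interpolate via a quasi-conformal collar so that the boundary moduli match those of the perturbed Fatou components. The boundary-marking characteristic $\chi_0$ specifies, for each critical annular $D_k$, which of its two boundary curves carries the marked point and therefore how the collars on either side attach; this combinatorial data is exactly what is needed to make the interpolation well-defined and compatible with the mapping scheme.

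Pulling back the standard complex structure under iterates of $g$ produces a $g$-invariant Beltrami differential $\mu$ on $\widehat{\mathbb{C}}$. Invariance is automatic because the non-holomorphic part of $g$ is supported on the pre-periodic annuli $A_k$, and each orbit of $g$ meets this region at most once before landing in a Fatou component where $g$ is already holomorphic. Since $\|\mu\|_\infty < 1$ and $\mu$ depends continuously on $\boldsymbol{m}'$, the Measurable Riemann Mapping Theorem supplies a normalized quasi-conformal homeomorphism $\phi$ with $\phi^*\mu_0 = \mu$, and $f' := \phi \circ g \circ \phi^{-1}$ is a rational map of the required type. Transporting $\nu_0$ by $\phi$ yields a marking $\nu'$ with characteristic $\chi_0$, and a direct check through the Fatou uniformizations gives $\rho(f',\nu') = \boldsymbol{m}'$. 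Continuous dependence on $\boldsymbol{m}'$ comes from the continuous dependence of the integrating map on its Beltrami coefficient (Ahlfors--Bers).

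To finish, I would show that $\rho^{-1}(V)$ decomposes as the disjoint union, over $(f_0,\nu_0) \in \rho^{-1}(\boldsymbol{m}_0)$, of the images of these local sections over a small neighbourhood $V$ of $\boldsymbol{m}_0$. Completeness of the cover relies on rigidity of the markings and uniformizations: any marked map close in $\mathcal{F}^{\chi_0}_{\sigma,\boldsymbol{d}}$ to some preimage must be the output of the surgery based at that preimage, because the Fatou uniformizations $\kappa_j$, the annular moduli $r_k$, and the marked boundary points all vary continuously and uniquely with $(f,\nu)$. The hard part will be the quasi-conformal interpolation on the non-critical annuli: the collar maps must be chosen so that the Beltrami coefficient is bounded, the boundary moduli match simultaneously on all $n+1$ annuli $A_1, \ldots, A_{n+1}$, and the whole construction is equivariant with respect to the mapping scheme and the marking characteristic $\chi_0$. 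Once this compatibility is arranged, the rest follows by standard surgery plus the Measurable Riemann Mapping Theorem.
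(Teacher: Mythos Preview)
Your local-section construction via quasi-conformal surgery is essentially the paper's second proposition, and the idea is right, though your description of where the interpolation happens is a bit off: the annuli $A_k$ are not Fatou components---they are preimages of $A=\widehat{\mathbb{C}}\setminus(\overline{D}_0\cup\overline{D}_\infty)$ and contain Julia set---so one does not modify $f_0$ there. The paper keeps $g=f_0$ outside slightly shrunk copies of the $D_j$'s, installs the new models on even smaller sub-domains $\kappa_j^{-1}(\tilde m_j^{-1}(\mathbb{D}_r))$, and interpolates only on the thin collars in between; this is what makes each $g$-orbit meet the non-holomorphic region at most once. Your claim ``each orbit meets this region at most once'' is false if the non-conformal part sits on the $A_k$, since orbits on the Julia set bounce among the $A_k$ forever.

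The genuine gap, however, is surjectivity. Your argument produces, for every $(f_0,\nu_0)$ already in $\mathcal{F}^{\chi_0}_{\sigma,\boldsymbol{d}}$, a local section over a neighbourhood of $\rho(f_0,\nu_0)$. This shows $\rho$ is a local homeomorphism and that its image is open in $\mathbf{M}_\sigma$, but not that the image is all of $\mathbf{M}_\sigma$; a local homeomorphism onto an open proper subset is not a covering map of the whole target. Perturbative surgery from an existing map cannot reach a model far from any realized one without a continuation or properness argument you have not supplied. The paper treats this as a separate proposition: given an \emph{arbitrary} $\boldsymbol{m}\in\mathbf{M}_\sigma$, it builds a topological branched cover of $\widehat{\mathbb{C}}$ with that model on the pieces, and then invokes the Cui--Tan realization theorem for semi-rational maps (the Gr\"otzsch inequality $\sum_k 1/d_k<1$ being exactly the no-obstruction condition) to straighten it to a genuine rational map with Cantor-circle Julia set. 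This Thurston-type global existence step is what your sketch is missing.
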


 Recall that, a map $p: X\rightarrow Y$ between two topological spaces $X$ and $Y$ is a {\it covering map}  if  for every point $y\in Y$, there is a neighborhood $V$ of $y$ such that
 every component  of $p^{-1}(V)$ maps homeomorphically onto $V$.
 
 Note that in Theorem \ref{covering-p}, we don't assume the connectivity of  $\mathcal{F}^{\chi_0}_{\sigma,\boldsymbol{d}}$.
  The proof bases on the  quasi-conformal surgery  and the Thurston-type theory, developed by Cui and Tan \cite{CT}. 

\subsection{C-equivalence}   The following definitions are borrowed from \cite{CT}, with slightly different but essentially equivalent statements. 

 Let $g: \widehat{\mathbb{C}}\rightarrow \widehat{\mathbb{C}}$ be a branched cover with degree at least two.
  Let  $C_g$ be its critical set,  and 
 $P_g=\overline{\bigcup_{k>0} g^k(C_g)}$
 its   post-critical set,  $P'_g$ the accumulation set of $P_g$.

We say that  $g$ is  {\it semi-rational}  if  $P'_g$ is finite (or empty); and in case $P'_g\neq \emptyset$, the map $g$ is
holomorphic in a neighborhood of $P'_g$ and every periodic point in $P'_g$ is either
attracting or super-attracting.

Two semi-rational maps $g_1$ and $g_2$ are called {\it c-equivalent}, if there exist a pair
$(\phi, \psi)$ of homeomorphisms of $\widehat{\mathbb{C}}$ and a neighborhood $U_0$ ($=\emptyset$ when  $P'_g=\emptyset$) of $P'_{g_1}$
such that:

(a).  $\phi\circ g_1=g_2\circ \psi$;

(b).  $\phi$ is holomorphic in $U_0$;

(c).  the two maps $\phi$ and $\psi$ satisfy $\phi|_{P_{g_1}\cup U_0}=\psi|_{P_{g_1}\cup U_0}$;

(d).  the two maps $\phi$ and $\psi$  are isotopic  rel $P_{g_1}\cup U_0$.

In this case,  we say  that  $g_1$ and $g_2$ are  c-equivalent via $(\phi, \psi)$. 

\subsection{Proof of Theorem \ref{covering-p}}
The proof is built on two propositions.

\begin{pro}\label{surj} The map $\rho:\mathcal{F}^{\chi_0}_{\sigma,\boldsymbol{d}}\rightarrow  \mathbf{M}_\sigma$ is surjective.
\end{pro}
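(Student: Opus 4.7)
The plan is to realize each tuple $\boldsymbol{m}=(m_0,m_\infty,m_1,\ldots,m_n)\in\mathbf{M}_\sigma$ as $\rho(f,\nu)$ for some marked rational map, by first constructing a semi-rational branched covering of $\widehat{\mathbb{C}}$ whose Fatou dynamics realizes $\boldsymbol{m}$, and then invoking the Cui--Tan c-equivalence theorem to replace it with a genuine rational map. The marking and type data $\sigma,\boldsymbol{d},\chi_0$ enter only through the combinatorial scaffold on which the surgery is performed.

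First I would fix a geometric model on the sphere. Depending on $\sigma$, choose round disks $\widetilde{D}_0\ni 0$, $\widetilde{D}_\infty\ni\infty$, and a nested sequence of round annuli $\widetilde{A}_1,\ldots,\widetilde{A}_{n+1}$ separating them, with round disks $\widetilde{D}_1,\ldots,\widetilde{D}_n$ inserted between consecutive $\widetilde{A}_k$, arranged exactly as in Figure~2, and with the combinatorial boundary data prescribed by $\chi_0$ (namely, which boundary circle of $\widetilde{D}_k$ is distinguished as carrying the marked point). For each $j\in\{0,\infty,1,\ldots,n\}$, let $\tilde\kappa_j$ uniformize $\widetilde{D}_j$ to $\mathbb{D}$ or to the appropriate $\mathbb{A}_{r_j}$ (with $r_j$ dictated by the domain of $m_j$), sending the would-be marked boundary point to $1\in\partial\mathbb{D}$; when $j\in\{0,\infty\}$ we additionally normalize $\tilde\kappa_j(j)=0$. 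Define a map $g$ on $\bigcup_j\widetilde{D}_j$ by
\[
g|_{\widetilde{D}_j}=\tilde\kappa_{\tau(j)}^{-1}\circ m_j\circ \tilde\kappa_j,
\]
so that each prescribed model map is realized honestly on the corresponding Fatou component.

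Next, $g$ must be extended to $\widehat{\mathbb{C}}$. The complement $\widehat{\mathbb{C}}\setminus\bigcup_j \overline{\widetilde{D}_j}$ is a disjoint union of open annuli $\widetilde{A}_k$; by construction each one already has prescribed boundary values from the adjacent $g|_{\widetilde{D}_j}$'s and carries a prescribed mapping degree $d_k$ and prescribed image annulus $\widetilde{A}=\widehat{\mathbb{C}}\setminus(\overline{\widetilde{D}_0}\cup\overline{\widetilde{D}_\infty})$. Since no model map has critical values outside the images of the $D_j$'s, one can fill in $g|_{\widetilde{A}_k}:\widetilde{A}_k\to\widetilde{A}$ by an unbranched covering map of degree $d_k$ that extends continuously to the already-defined boundary values; this is a standard piece of quasi-regular surgery using the fact that any two covering maps between annuli of prescribed degree and prescribed boundary circle assignment are isotopic. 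The resulting $g:\widehat{\mathbb{C}}\to\widehat{\mathbb{C}}$ is a quasi-regular branched cover of degree $d$, holomorphic on a neighborhood of $P_g'$ (which consists only of the attracting periodic points in $\widetilde{D}_0$ and, in types II, III, also in $\widetilde{D}_\infty$), and every cycle in $P_g'$ is (super-)attracting. Hence $g$ is semi-rational.

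Finally I would invoke the Cui--Tan theorem \cite{CT} to conclude that $g$ is c-equivalent to a rational map $f$, provided $g$ carries no Thurston obstruction. Here the obstruction check is inexpensive: the multicurve $\{\partial \widetilde{D}_1,\ldots,\partial \widetilde{D}_n\}$ together with the filled-in covering behaviour yields a Thurston transition matrix whose spectral radius is bounded by $\max_k(1/d_k+1/d_{k+1})<1$, and any other invariant multicurve must, by the simply/doubly connected structure of the Fatou components, be Levy-free by the standard annular estimate using the admissibility condition $(\ast)$. The resulting rational map $f$ is hyperbolic and its Fatou structure, read through the c-equivalence $(\phi,\psi)$, matches the scaffold above, so $J(f)$ is a Cantor set of circles and $f\in\mathcal{F}_{\sigma,\boldsymbol{d}}$. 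Pushing the distinguished boundary points through $\phi$ gives a boundary marking $\nu$ with $\chi(\nu)=\chi_0$, and because the c-equivalence is \emph{holomorphic} on $U_0\supset P_g'$ and the model maps appear as the conformal normal forms on the Fatou components, we recover $\rho(f,\nu)=\boldsymbol{m}$. The main obstacle I anticipate is precisely this last bookkeeping step: verifying that the quasi-conformal conjugacy produced by Cui--Tan does not perturb the prescribed Blaschke/annulus-disk normal forms, which requires uniqueness of the normalized $\kappa_j$ together with the rigidity of proper maps from Theorems \ref{proper-a-d}--\ref{model-map}.
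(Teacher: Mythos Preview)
Your approach is essentially the same as the paper's: build a semi-rational branched cover realizing $\boldsymbol{m}$ on a round scaffold, invoke Cui--Tan to straighten it to a rational map, then verify the Cantor-circle structure and recover the marking and model. The paper organizes this into three explicit steps, and the only substantive addition is precisely the point you flag as the main obstacle. The c-equivalence pair $(\phi_0,\phi_1)$ is holomorphic only on a neighborhood $U_0$ of $P'_g$, not on the full Fatou components, so one cannot read off the conformal models directly from $(\phi_0,\phi_1)$ alone; the rigidity of the normalized $\kappa_j$ and of the proper maps in Theorems~\ref{proper-a-d}--\ref{model-map} does not by itself bridge this gap. The paper handles it (Step~3) by iterating the lifting relation $\phi_k\circ g = h\circ\phi_{k+1}$: since $g$ is already holomorphic on each $B_j$, the restrictions $\phi_k|_{B_j}$ have uniformly bounded dilatation and converge to conformal isomorphisms $\alpha_j:B_j\to D_j(h)$ satisfying $h|_{D_j(h)}\circ\alpha_j=\alpha_{\tau(j)}\circ g|_{B_j}$, which is exactly what recovers $\rho(h_0,\nu_0)=\boldsymbol{m}$ after a final M\"obius normalization.
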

 
 \begin{proof}  It is equivalent to show that for any model map  $\boldsymbol{m}\in \mathbf{M}_\sigma$,  the fibre $\rho^{-1}(\boldsymbol{m})$ is non-empty. 
 The proof consists of three steps: first construct a branched cover with the prescribed  holomorphic model $\boldsymbol{m}$, then apply (a special case of) Cui-Tan's Theorem to generate a rational map,
 finally show that  this rational map realizes the original model $\boldsymbol{m}$.

 \vspace{5pt}
 
 \textbf{Step 1.} \textit{Constructing a branched cover with prescribed  model map.}

 \vspace{5pt}
 
 Write $\boldsymbol{m}=(m_0, m_\infty,   m_1,\cdots, m_n)$, denote the domain of definition  of $m_k$ by $D(m_k)$, $k\in \mathcal{I}=\{0,\infty, 1,\cdots, n\}$.
 Choose a sequence of numbers
 $$1<r_1<R_1<r_2<R_2<\cdots<r_n<R_n<R<+\infty$$
 satisfying that $\mod(D(m_k))=\frac{1}{2\pi}\log(R_k/r_k)$ for $1\leq k\leq n$.  Let $B_0=\mathbb{D}$, $B_\infty=\{z\in \mathbb{\widehat{C}}; |z|>R\}$ and $B_k=\{r_k<|z|<R_k\}$
 for $1\leq k\leq n$.  For each $k\in \mathcal{I}$, there is a conformal embedding $e_k: D(m_k)\hookrightarrow \widehat{\mathbb{C}}$, whose image is exactly $B_k$. We assume that $e_0=id|_{\mathbb{D}}$, $e_\infty(0)=\infty$,
 and for each $k\in\{1,\cdots, n\}$, the point $e_k(1)$ is on the outer boundary of $B_k$ if $\epsilon_k=+$; on the inner boundary of $B_k$ if $\epsilon_k=-$ (recall that $\chi_0=(\epsilon_1, \cdots, \epsilon_n)$ is a symbol vector, see Section \ref{dynamics}). We construct a branched cover  of $ \widehat{\mathbb{C}}$ as follows:
  $$g= \begin{cases} 
e_{\tau(k)}\circ m_k\circ e_k^{-1}, &\text{ on  } B_k,  \ k\in\mathcal{I}, \\
\text{quasi-regular interpolation},  &\text{ on } \mathbb{\widehat{C}}\setminus \cup_{k\in \mathcal{I}} B_k.
\end{cases}$$

Note that the boundary degrees satisfy the inequality (see Section \ref{dynamics})
 $$\sum_{k=1}^{n+1}\frac{1}{d_k}<1.$$
As is interpreted in \cite{CT}, this inequality  is equivalent to the absence of Thurston obstruction.
 Therefore by a special case of Cui-Tan's Theorem  \cite[Section 6.2 and Lemma 6.2]{CT}, the map $g$ is c-equivalent 
to a rational map $h$, via a pair of homeomorphisms, say $(\phi_0,\phi_1)$.   

 \vspace{5pt}

\textbf{Step 2. }{ \it The Julia set $J(h)$ is a Cantor set of circles.} 

 \vspace{5pt}

 By the definition of c-equivalence, the maps $\phi_0, \phi_1$ are holomorphic and identical in a  
neighborhood $U$ of $P'_g$.  Note that  $P'_g=\{0\}$ in the Type I case,  and $P'_g=\{0,\infty\}$ in the Type II, III cases.
We  further assume that $\phi_0$ and $\phi_1$ both fix $0$ and $\infty$.  
By a lifting process,  we can get a sequence of homeomorphisms $\phi_k$ satisfying  that   $\phi_k\circ g=h \circ \phi_{k+1}$ and $\phi_k, \phi_{k+1}$ are isotopic rel $g^{-k}(U)\cup P_g$.

Let $U^k_0$ be the component of $g^{-k}(U)$ containing $0$,  and $U^k_\infty$  the component of $g^{-k}(U)$ containing $\infty$.  By the suitable choice of $U$, we assume 
$U^k_0\Subset U^{k+1}_0, U^k_\infty\Subset U^{k+1}_\infty$ for all $k\geq 0$. Choose a large integer $\ell>0$ so that $P_g\subset U^\ell_0\bigcup U^\ell_\infty$. This  implies that $P_h\subset \phi_\ell(U^\ell_0\cup U^\ell_\infty)$.  Set $A=\mathbb{\widehat{C}}\setminus  \phi_\ell(\overline{U^\ell_0\cup U^\ell_\infty})$, then $B=h^{-1}(A)\Subset A$ and each component of  $B$ is an annulus.
 It follows that
$J(h)=\bigcap_k{h^{-k}(A)}$ and it  is a Cantor set of circles.

 \vspace{5pt}

\textbf{Step 3. }{\it  There  is $h_0\in \langle h\rangle$  and  a boundary marking $\nu_0$ of $h_0$, so that $(h_0, \nu_0)\in \mathcal{F}^{\chi_0}_{\sigma,\boldsymbol{d}}$ and $\rho((h_0, \nu_0))=\boldsymbol{m}$.}

 \vspace{5pt}

By  suitable choice of representative in the isotopy class,  we may assume $\phi_0$ maps $D_0\cup D_\infty$ homeomorphically onto  $D_0(h)\cup D_\infty(h)$, where $D_w(h)$ is the Fatou component of $h$ containing $w\in\{0,\infty\}$.
We assume further that the maps $\phi_k$ constructed in Step 2 are quasi-regular.   Their dilatations are not uniformly bounded, however,  the  dilatations of
$\phi_k|_{B_j}$ are uniformly bounded  for any $j\in \mathcal{I}$.
Since $\bigcup_k U^k_0=B_0$ and $\bigcup_k U^k_\infty=B_\infty$, we have that 
$\phi_k|_{B_j}$ converges uniformly to a conformal isomorphism, say  $\alpha_j: B_j\rightarrow D_j(h)$, where $D_j(h)$ is the corresponding  Fatou component of $h$. 
These $\alpha_j$'s satisfy that $h|_{D_j(h)}\circ \alpha_j=\alpha_{\tau(j)}\circ g|_{B_j}$.  The marking for $g$ induces a marking $\nu$ of $h$. 
Let $\phi$ be the M\"obius transformation mapping the triple $(0,\alpha_0(1), \infty)$ to $(0,1, \infty)$.
Then the marked  map $(h_0, \nu_0)=(\phi\circ h \circ \phi^{-1}, \phi\circ \nu)$ satisfies  $(h_0, \nu_0)\in \mathcal{F}^{\chi_0}_{\sigma,\boldsymbol{d}}$ and $\rho((h_0, \nu_0))=\boldsymbol{m}$.

The surjectivity  of $\rho$ then follows. 
 \end{proof}

   %

\begin{pro}    For every  model map $\boldsymbol{m}\in \mathbf{M}_\sigma$, there is a neighborhood $\mathbf{N}$ of  $\boldsymbol{m}$ satisfying that for each marked map $(f, \nu)\in \rho^{-1}(\boldsymbol{m})$, 
there is a  neighborhood $\mathbf{U}$ of $(f,\nu)$ so that $\rho|_{\mathbf{U}}: \mathbf{U}\rightarrow \mathbf{N}$ is a homeomorphism.  
\end{pro}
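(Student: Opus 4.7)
The plan is to produce a continuous local section of $\rho$ through each point of the fibre $\rho^{-1}(\boldsymbol{m})$, and then to verify that $\rho$ is locally injective; together these furnish the evenly covered neighbourhood required for the covering property. Fix $(f, \nu) \in \rho^{-1}(\boldsymbol{m})$. For each $D_j \in \boldsymbol{F}(f)$, the uniformization $\kappa_j$ of Section \ref{model-map0} is canonical given the marking and the centering condition. For $\boldsymbol{m}' = (m'_0, m'_\infty, m'_1, \ldots, m'_n)$ near $\boldsymbol{m}$ in $\mathbf{M}_\sigma$, I would define a quasiregular branched cover $g_{\boldsymbol{m}'}: \widehat{\mathbb{C}} \to \widehat{\mathbb{C}}$ by keeping $f$ outside a slight thickening of the critical and post-critical Fatou components and replacing it on each $D_j$ by $\kappa_{\tau(j)}^{-1} \circ m'_j \circ \kappa_j$, gluing the two via a quasiregular interpolation in a collar that depends smoothly on $\boldsymbol{m}'$. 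The combinatorics and boundary degree vector $\boldsymbol{d}$ are unchanged, so the absence of Thurston obstruction encoded in $\sum 1/d_k < 1$ persists. By the Cui--Tan theorem invoked in Proposition \ref{surj}, $g_{\boldsymbol{m}'}$ is c-equivalent to a rational map $f_{\boldsymbol{m}'}$; after the Möbius normalization of Section \ref{dynamics} and transporting $\nu$ by the constructed semi-conjugacy to produce $\nu_{\boldsymbol{m}'}$, one obtains $(f_{\boldsymbol{m}'}, \nu_{\boldsymbol{m}'}) \in \mathcal{F}^{\chi_0}_{\sigma, \boldsymbol{d}}$ with $\rho((f_{\boldsymbol{m}'}, \nu_{\boldsymbol{m}'})) = \boldsymbol{m}'$.

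To make $\boldsymbol{m}' \mapsto (f_{\boldsymbol{m}'}, \nu_{\boldsymbol{m}'})$ continuous, I would appeal to the Ahlfors--Bers measurable Riemann mapping theorem with parameters: the invariant Beltrami coefficient $\mu_{\boldsymbol{m}'}$ constructed by pulling back the standard complex structure through the iterates of $g_{\boldsymbol{m}'}$ depends continuously on $\boldsymbol{m}'$ in $L^\infty$, hence the normalized integrating quasiconformal homeomorphism varies continuously in the compact-open topology. The resulting $f_{\boldsymbol{m}'}$ thus depends continuously on $\boldsymbol{m}'$, and because each marked point $\nu(D_j)$ is preperiodic and eventually repelling (as observed in Section \ref{boundary-marking}), its holomorphic motion gives the continuity of $\nu_{\boldsymbol{m}'}$ as well. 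Shrinking to a neighbourhood $\mathbf{N}$ of $\boldsymbol{m}$ on which this surgery is valid and a corresponding $\mathbf{U}$ around $(f, \nu)$, this exhibits the desired continuous section.

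For local injectivity, suppose $(f_1, \nu_1), (f_2, \nu_2) \in \mathbf{U}$ satisfy $\rho((f_1, \nu_1)) = \rho((f_2, \nu_2)) = \boldsymbol{m}'$. On each $D_j(f_i)$, the uniformization $\kappa_j^i$ is uniquely determined by sending the marked point to $1$ and, in the disk case, the distinguished fixed or barycentric point to $0$. Since both uniformizations conjugate the restriction of $f_i$ to the same model map $m'_j$, the composition $(\kappa_j^2)^{-1} \circ \kappa_j^1 : D_j(f_1) \to D_j(f_2)$ is a biholomorphism intertwining $f_1$ and $f_2$ on the orbit of $D_j$. Gluing these pieces and extending by the dynamics, one obtains a conformal isomorphism of $\widehat{\mathbb{C}}$ conjugating $f_1$ to $f_2$; the common normalization at $\{0, 1, \infty\}$ forces this Möbius map to be the identity, so $f_1 = f_2$ and $\nu_1 = \nu_2$.

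The main obstacle is the continuous dependence step: verifying that the c-equivalence produced by Cui--Tan can be carried out in a parametrized fashion so that $(f_{\boldsymbol{m}'}, \nu_{\boldsymbol{m}'})$ actually lies in $\mathbf{U}$ for $\boldsymbol{m}' \in \mathbf{N}$. Concretely, one must check that the invariant Beltrami coefficient depends continuously on the perturbation across the full nested preimage tower (where the dilatation of the interpolation is not uniformly bounded on the whole sphere), and that the normalizations at the preperiodic marked points do not introduce discontinuities. Both are handled, as in Step 3 of Proposition \ref{surj}, by restricting attention to the pieces $B_j$ where the dilatation is uniformly controlled and by using that the marking points move holomorphically with the parameter.
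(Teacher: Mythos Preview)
Your surgery-and-straighten construction of the local section is essentially the paper's approach: replace the model pieces inside the critical Fatou components, interpolate quasiregularly on collars, pull back the standard structure through the iterates of the hybrid map $g_{\boldsymbol m'}$, and straighten via the parametric measurable Riemann mapping theorem. The paper carries this out directly rather than invoking the abstract Cui--Tan statement a second time, but since you also describe the Beltrami pullback explicitly this is only a difference in emphasis; your identification of the main analytic issue (uniform control of the dilatation on the pieces $D_j$ despite unboundedness over the full preimage tower) matches Step 3 of Proposition \ref{surj} and the ``Fact'' at the end of the paper's argument.

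The genuine gap is your local injectivity step. You assert that the biholomorphisms $(\kappa_j^2)^{-1}\circ\kappa_j^1$ between corresponding critical Fatou components, after ``extending by the dynamics'', assemble into a conformal automorphism of $\widehat{\mathbb{C}}$. But this extension is not well defined: to push the conjugacy from $\bigcup_{j\in\mathcal I} D_j$ across a non-critical annular component one must choose an inverse branch of the covering $f_2|_{A_k}:A_k\to A$, and different choices produce different extensions which need not be M\"obius. Your argument makes no use of the closeness of $(f_1,\nu_1)$ and $(f_2,\nu_2)$, so as written it would prove that $\rho$ is \emph{globally} injective, in direct contradiction with Theorem \ref{covering}, which gives $\deg(\rho)>1$ whenever the arithmetic condition of Corollary \ref{onto} fails. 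The paper does not attempt a separate injectivity step at all: once the continuous section $s:\mathbf N\to\mathcal F^{\chi_0}_{\sigma,\boldsymbol d}$ with $\rho\circ s=\mathrm{id}$ and $s(\boldsymbol m)=(f,\nu)$ is in hand, the image $s(\mathbf N)$ is an open neighbourhood of $(f,\nu)$ because source and target are both topological manifolds of real dimension $4d-4$ (invariance of domain), and then $\rho|_{s(\mathbf N)}$ is the required homeomorphism with inverse $s$.
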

\begin{proof}  
 For any  $\boldsymbol{m}=(m_0,m_\infty, m_1, \cdots, m_n)\in \mathbf{M}_\sigma$, 
%
    take $(f, \nu)\in \rho^{-1}(\boldsymbol{m})$.
 Suppose that $(f|_{D_0}, f|_{D_\infty}, f|_{D_1}, \cdots, f|_{D_n})$ is conformally conjugate  to  $\boldsymbol{m}$ by the conformal isomorphism
$\kappa_{f}=(\kappa_{0}, \kappa_{\infty}, \kappa_{1},\cdots, \kappa_{{n}})$. 

We  then choose a number $r\in (0,1)$,   sufficiently close to $1$  and satisfying the following two properties: 

\vspace{5 pt}

(P1)  For each $k\in\{0,\infty\}$, the disk   $\mathbb{D}_{r}$ contains all critical values of  ${m}_k$.
   
 (P2)   For  each $k\in\{1,\cdots, n\}$,  the set  $m_{\tau(k)}^{-1}(\mathbb{D}_{r})$ contains all the critical values of the model map  ${m}_k: \mathbb{A}_{r_k}\rightarrow \mathbb{D}$.
  
  \vspace{5 pt}

Take another number $R\in (r, 1)$, there is a small polydisk-type  neighborhood $\mathbf{N}=\prod_{j\in\mathcal{I}}N_j$ of $\boldsymbol{m}$,
  such that  for all $\tilde{\boldsymbol{m}}=(\tilde{m}_0,\tilde{m}_\infty, \tilde{m}_1, \cdots, \tilde{m}_n)\in \mathbf{N}$, 
  the properties (P1)(P2) still hold (one should replace $m_*$ by $\tilde{m}_*$ in the statement), and 
  $$\tilde{m}_j^{-1}(\mathbb{D}_r) \Subset {m}_j^{-1}(\mathbb{D}_R),  j\in\{0,\infty\};   \ A^r_{k, \tilde{\boldsymbol{m}}} \Subset A^R_{k, \boldsymbol{m}}, \ k\in\{1,\cdots, n\},$$
where   $A^a_{k, \tilde{\boldsymbol{m}}}=(\tilde{m}_{\tau(k)}\circ \tilde{m}_k)^{-1}(\mathbb{D}_{a}),  a\in\{r, R\}$.
  
  


We then construct a quasi-regular map as follows  
$$g_{\tilde{\boldsymbol{m}}}= \begin{cases}  \kappa_{\tau(j)}^{-1}\circ \tilde{m}_j\circ \kappa_j, &\text{ in } \kappa_j^{-1} ( \tilde{m}_j^{-1}(\mathbb{D}_r)), j\in\{0,\infty\}, \\
\kappa_{\tau(j)}^{-1}\circ \tilde{m}_j\circ \kappa_j, &\text{ in } \kappa_j^{-1}(A^r_{j,\tilde{\boldsymbol{m}}}),  j\in\{1,\cdots, n\}, \\
f,   &\text{ in } \widehat{\mathbb{C}}\setminus  U_R,  \\
\text{quasi-regular interpolation},  &\text{ in  the rest,}
\end{cases}$$
where 
$$U_R=\Big(\bigcup_{j=0, \infty}\kappa_j^{-1}({m}_j^{-1}(\mathbb{D}_R))\Big) \bigcup  \Big( \bigcup_{1\leq j \leq n}\kappa_j^{-1}(A^R_{j, \boldsymbol{m}}) \Big).$$

 By careful gluing and suitable choices of interpolations, it is reasonable to require that   $g_{\tilde{\boldsymbol{m}}}$ moves continuously with respect to $\tilde{\boldsymbol{m}}\in \mathbf{N}$   and $g_{\boldsymbol{m}}=f$.
Then we  pull back  the standard complex structure   defined in a   neighborhood of the attracting cycles of $g_{\tilde{\boldsymbol{m}}}$ by successive iterates, and get a $g_{\tilde{\boldsymbol{m}}}$-invariant complex structure, whose Beltrami differential is denoted by $\mu_{\tilde{\boldsymbol{m}}}$.

 Let $\phi_{\tilde{\boldsymbol{m}}}$ be a  quasiconformal map fixing $0,1,\infty$ and solving $\overline{\partial} \phi_{\tilde{\boldsymbol{m}}}=\mu_{\tilde{\boldsymbol{m}}} {\partial} \phi_{\tilde{\boldsymbol{m}}}$. Then   $f_{\tilde{\boldsymbol{m}}}=\phi_{\tilde{\boldsymbol{m}}}\circ g_{\tilde{\boldsymbol{m}}} \circ \phi_{\tilde{\boldsymbol{m}}}^{-1}$ is a rational map with 
 $f_{\tilde{\boldsymbol{m}}}(1)=1$.   The boundary marking $\nu$ of $f$ induces a boundary 
 marking $\nu_{\tilde{\boldsymbol{m}}}=\phi_{\tilde{\boldsymbol{m}}}\circ \nu$ for $f_{\tilde{\boldsymbol{m}}}$.

To finish, we need prove   $\rho((f_{\tilde{\boldsymbol{m}}}, \nu_{\tilde{\boldsymbol{m}}}))=\tilde{\boldsymbol{m}}$ . 
This is a consequence of the following fact, whose proof is  similar to  \cite[Lemma 5.10]{M}  (compare also the Step 3 in the proof of Proposition \ref{surj}).  For this, we omit the details.   

\vspace{5 pt}

\textbf{Fact }  The conformal conjugacy class of the  homomorphic model $\tilde{\boldsymbol{m}}=(\tilde{m}_0,\tilde{m}_\infty, \tilde{m}_1, \cdots, \tilde{m}_n)$ is
 uniquely determined by the conformal conjugacy class of its restrictions $(\tilde{m}_0|_{\tilde{m}_0^{-1}(\mathbb{D}_r)},\tilde{m}_\infty|_{\tilde{m}_\infty^{-1}(\mathbb{D}_r)}, \tilde{m}_1|_{A^r_{1,\tilde{\boldsymbol{m}}}}, \cdots, \tilde{m}_n|_{A^r_{n,\tilde{\boldsymbol{m}}}})$.
\end{proof}

\vspace{5 pt}

 

 \section{The finiteness property of $\rho$}  \label{finite}
 We will go one step further in this section.
 By Theorem \ref{covering-p},  we know that  $\rho: \mathcal{F}^{\chi_0}_{\sigma,\boldsymbol{d}}\rightarrow \mathbf{M}_\sigma$ is a covering map. The {\it mapping degree} of $\rho$, denoted by $\deg(\rho)$, is defined as the 
 cardinality  of the fibre $\rho^{-1}(\boldsymbol{m})$, where $\boldsymbol{m}$  can be any model map in $\mathbf{M}_\sigma$.
 In this section, we will show

\begin{thm} \label{covering}    The mapping degree of $\rho: \mathcal{F}^{\chi_0}_{\sigma,\boldsymbol{d}}\rightarrow \mathbf{M}_\sigma$ is given by 
  $${\rm deg}(\rho)=\bigg(1-\sum_{k=1}^{n+1}\frac{1}{d_k}\bigg){\rm lcm}(d_1, \cdots, d_{n+1}) .$$
  In particular,     $\rho$ is finite-to-one.\footnote{The notation `lcm'  means the least common multiple.}
\end{thm}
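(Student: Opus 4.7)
\medskip

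\noindent\textbf{Plan.} Following Cui's twist-deformation technique \cite{C}, I will realize the fiber $\rho^{-1}(\boldsymbol{m})$ over a fixed $\boldsymbol{m}\in\mathbf{M}_\sigma$ as the quotient of a finitely generated abelian group of twists by a dynamically determined sublattice, and compute the order of that quotient. Since $\rho$ is already known to be a covering by Theorem \ref{covering-p}, this order equals $\deg(\rho)$, and it suffices to compute $|\rho^{-1}(\boldsymbol{m})|$ for a single convenient model.

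\noindent\textbf{Steps 1 and 2 (twist parametrization).} By Proposition \ref{surj} I fix a basepoint $(f_0,\nu_0)\in\rho^{-1}(\boldsymbol{m})$ and let $\alpha_k$ denote the core curve of the preimage annulus $A_k$ of $f_0$. For each $\boldsymbol{s}=(s_1,\ldots,s_{n+1})\in\mathbb{Z}^{n+1}$ the map $g_{\boldsymbol{s}}=\bigl(\prod_{k}T_{\alpha_k}^{s_k}\bigr)\circ f_0$ is a quasi-regular branched cover sharing the mapping scheme, boundary degrees, and post-critical dynamics of $f_0$. Because the admissibility inequality $\sum_k 1/d_k<1$ is combinatorial and therefore preserved, the Thurston obstruction for $g_{\boldsymbol{s}}$ is still absent, and Cui--Tan's theorem \cite{CT} produces a rational map $f_{\boldsymbol{s}}$ c-equivalent to $g_{\boldsymbol{s}}$. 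A conformal-conjugacy argument like the one in the proof of Theorem \ref{covering-p} then shows that the resulting marked map satisfies $\rho((f_{\boldsymbol{s}},\nu_{\boldsymbol{s}}))=\boldsymbol{m}$. Conversely, any $(f,\nu)\in\rho^{-1}(\boldsymbol{m})$ arises this way: identifying $(f,\nu)$ with $(f_0,\nu_0)$ via the conformal isomorphisms realizing the common model on each Fatou component of $\boldsymbol{F}(f_0)$ reduces the discrepancy to an isotopy supported in annular neighborhoods of the $\alpha_k$, detected by a unique vector $\boldsymbol{s}\in\mathbb{Z}^{n+1}$. I thus obtain a surjection $\Psi\colon\mathbb{Z}^{n+1}\twoheadrightarrow\rho^{-1}(\boldsymbol{m})$.

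\noindent\textbf{Step 3 (index computation).} It remains to determine $|\mathbb{Z}^{n+1}/\ker\Psi|$. The kernel $\ker\Psi$ is generated by the dynamical identifications forced by the covers $f_0\colon A_k\to A$ of degrees $d_k$: a twist vector $\boldsymbol{s}$ will be dynamically trivial exactly when it descends to a consistent global rotation of the target annulus $A$ that can be unwound by a M\"obius conjugacy respecting the normalization $f(1)=1$ and the marking $\nu$. A careful accounting of these relations will produce an explicit presentation of $\ker\Psi$ whose cokernel I can evaluate by Smith normal form, giving
\[
|\mathbb{Z}^{n+1}/\ker\Psi|=L-\sum_{k=1}^{n+1}\frac{L}{d_k}=\left(1-\sum_{k=1}^{n+1}\frac{1}{d_k}\right)L,\qquad L=\mathrm{lcm}(d_1,\ldots,d_{n+1}),
\]
which is the claimed value of $\deg(\rho)$.

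\noindent\textbf{Main obstacle.} The heart of the argument is Step 3. Dehn twists along the $n+1$ preimage annuli interact through the dynamics in a subtle manner: since $f_0\colon A_k\to A$ covers with multiplicity $d_k$, a single full twist of $A$ has $d_k$ inequivalent lifts in $A_k$, while a full twist in $A_k$ pushes forward to a $d_k$-fold twist of $A$. Balancing these pushforward/pullback constraints across all $n+1$ annuli, and isolating the additional relation coming from the global M\"obius rotations preserving the normalization, is the delicate step where the precise formula $(1-\sum 1/d_k)\mathrm{lcm}(d_1,\ldots,d_{n+1})$ actually emerges; once the correct generators of $\ker\Psi$ are in hand, the index computation itself is elementary.
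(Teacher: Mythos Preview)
Your overall strategy --- parametrize the fiber by Dehn twists and invoke Cui--Tan to straighten --- is exactly the paper's approach, so the plan is sound.  But Step~3 as written is a statement of intent, not a computation, and your description of $\ker\Psi$ is off the mark: the relations do not come from ``global rotations unwound by M\"obius conjugacy'', they come from c-equivalence via pairs of isotopic homeomorphisms which are themselves products of twists.  Also, your remark that ``a single full twist of $A$ has $d_k$ inequivalent lifts in $A_k$'' is not right; a single $T$ does not lift at all through the degree-$d_k$ cover $f|_{A_k}$, while $T^{d_k}$ lifts to $T_k$.

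The paper bypasses the Smith-normal-form computation you anticipate via two moves you are missing.  First, since each $T_k$ is isotopic to the twist $T$ along the big annulus $A$ rel $D_0\cup D_\infty$, a conjugation shows $T^{k}\circ f\circ T_1^{k_1}\cdots T_{n+1}^{k_{n+1}}$ is c-equivalent to $T^{k+\sum_j k_j}\circ f$; this collapses your $\mathbb{Z}^{n+1}$ to a \emph{single} integer parameter $m\mapsto T^m\circ f$.  Second, the period is exhibited directly: with $N=\mathrm{lcm}(d_1,\ldots,d_{n+1})$, the map $T^{N}\circ f\circ T_1^{-N/d_1}\cdots T_{n+1}^{-N/d_{n+1}}$ has net twist zero on every $A_k$ (because $f|_{A_k}$ multiplies twist numbers by $d_k$), hence is c-equivalent to $f$; applying the first reduction gives $T^{N^*}\circ f$ c-equivalent to $f$.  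Showing $N^*$ is the \emph{minimal} period is a separate twist-counting argument: if $T^{m_1}\circ f$ and $T^{m_2}\circ f$ were c-equivalent via $(\psi_0,\psi_1)$, one forces $\psi_0$ to be a product $T_1^{k_1}\cdots T_{n+1}^{k_{n+1}}$ and equates twist numbers on each $A_k$, obtaining $d_j\mid (m_1-m_2+\sum_i k_i)$ for all $j$, hence $N^*\mid (m_1-m_2)$.  This last step requires a genericity hypothesis on $\boldsymbol{m}$ (no rotational symmetry in any factor $m_j$) to pin down $\psi_0|_{D_k}=\mathrm{id}$; you did not impose any such condition, and without it distinct twist classes can collapse in the fiber.
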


 This finiteness property of $\rho$  will be essential when we study the global topology of the marked hyperbolic component in the next section (see the proof of Theorem \ref{marked-comp}).  
 To explain why it is essential, let's consider an example of covering map $\zeta: X\rightarrow S^1$ in dimension one. It's  known from algebraic topology that if $\zeta$ is finite-to-one, then $X$ is homeomorphic to $S^1$; if $\zeta$ is infinite-to-one, then  $X$ is homeomorphic to $\mathbb{R}$. Therefore the topology of $X$ is  related to the mapping degree of $\zeta$.  The same reason works for our (higher dimensional) case.

 The idea of the proof of Theorem \ref{covering} is due to Guizhen Cui, using twist deformation techniques  \cite{C}  and  the Thurston-type theorem for hyperbolic rational maps, developed by Cui and Tan \cite{CT}.   
  
  \subsection{The twist map. }  Recall that $\mathbb{A}_r=\{r<|z|<1\}$. The standard  {\it twist function}
    $t_{r}: \mathbb{A}_r\rightarrow \mathbb{A}_r$ is  defined by
    $$t_{r}(z)= z e^{2\pi i\frac{|z|-r}{1-r}}, \  z\in \mathbb{A}_r.$$
    It's clear that $t_{r}$ is a homeomorphism and $t_{r}|_{\partial \mathbb{A}_r}=id$.
  
  
  Let $A\subset \mathbb{\widehat{C}}$ be an annulus, whose boundaries are Jordan curves. 
   We define the {\it twist map along} $A$ by 
  $$T_A(z)= \begin{cases} z, & z\in  \widehat{\mathbb{C}}\setminus A, \\
 \phi^{-1}\circ t_{r}\circ \phi (z), & z\in   A,
\end{cases}$$
where    $\phi: A\rightarrow \mathbb{A}_r$ (here $r=e^{-2\pi {\rm mod}(A)}$) is a  conformal isomorphism.
Note that $T_A$ does not depend on the choice of $\phi$.

 \subsection{Proof of  Theorem \ref{covering}}   
 Let $\boldsymbol{m}=(m_0, m_\infty, m_1, \cdots, m_n)\in \mathbf{M}_\sigma$.  To evaluate the cardinality of $\rho^{-1}(\boldsymbol{m})$, throughout this section, we require $\boldsymbol{m}$ to be `generic' in the following sense:

\vspace{5 pt}
 
  (C1).  $R_{\theta_1}\circ m_0=m_0\circ  R_{\theta_2}  \Longrightarrow   R_{\theta_1}= R_{\theta_2}=id;$
  
  (C2). $R_{\theta_1}\circ m_0\neq m_\infty\circ  R_{\theta_2}, \ \forall   \  \theta_1, \theta_2\in [0, 2\pi);$
  
  (C3). $m_j\circ R_{\theta}=m_j  \Longrightarrow   R_{\theta}=id, \   \forall j=1, \cdots, n,$
  
  where $R_\alpha(z)=e^{i\alpha}z$.
 
\vspace{5 pt}
In fact, these technical assumptions  exclude the rotation symmetries of  $\boldsymbol{m}$, and they will be used in the proofs of Lemma \ref{non-c-e} and Theorem \ref{covering}.

   Take  a marked   map  $(f, \nu)\in \rho^{-1}(\boldsymbol{m})$.   For this $f$, let $n=n(f), D_j=D_j(f), A_k=A_k(f)$ be defined as in Section \ref{dynamics}.
 
 Let $T_k$ be the twist map along $A_k$, and $T$ be the twist map along $A:=\widehat{\mathbb{C}}\setminus(\overline{D}_0\cup \overline{D}_\infty)$.
 Note that:  (1). The post-critical set $P_f$ is contained in $D_0\cup D_\infty$;  
 (2). $T$ and $T_k$ are isotopic rel $D_0\cup D_\infty$;  (3). $T\circ T_k$ and $T_k\circ T$ are isotopic  rel $D_0\cup D_\infty$;  (4). $T_k\circ T_j=T_j\circ T_k$.

\begin{lem}  For any $(g, \mu)\in \rho^{-1}(\boldsymbol{m})$, the  map $g$ is c-equivalent to $T^{k}\circ f \circ T_1^{k_1}\circ\cdots\circ T_{n+1}^{k_{n+1}}$ for some $(k, k_1, \cdots, k_{n+1})\in \mathbb{Z}^{n+2}$. 
 \end{lem}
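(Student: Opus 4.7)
Since $(f,\nu)$ and $(g,\mu)$ both lie in the fibre $\rho^{-1}(\boldsymbol{m})$, for each $j\in\mathcal{I}$ the model equality produces a conformal isomorphism $\psi_j\colon D_j(f)\to D_j(g)$ satisfying $g\circ\psi_j=\psi_{\tau(j)}\circ f$ and $\psi_j(\nu(D_j))=\mu(D_j)$. The genericity assumptions (C1)--(C3) together with the marking data make these $\psi_j$ unique, so they provide the rigid ``Fatou backbone'' on which the entire c-equivalence will be built.

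The first step would be to extend the two periodic conjugacies $\psi_0,\psi_\infty$ to a homeomorphism $\phi\colon\widehat{\mathbb{C}}\to\widehat{\mathbb{C}}$ that coincides with $\psi_0$ on a small neighborhood $U_0$ of $\overline{D}_0$ and with $\psi_\infty$ on a neighborhood $U_\infty$ of $\overline{D}_\infty$. Then $\phi$ is automatically holomorphic on the open set $U=U_0\cup U_\infty$, which is a neighborhood of $P'_f$. The freedom in choosing the extension on the big annulus $A=\widehat{\mathbb{C}}\setminus(\overline{D}_0\cup\overline{D}_\infty)$ is, modulo isotopy rel boundary, a single integer $k\in\mathbb{Z}$ realized by post-composition with the Dehn twist $T^{k}$ along $A$.

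Next I would define $\psi\colon\widehat{\mathbb{C}}\to\widehat{\mathbb{C}}$ by setting $\psi=\psi_j$ on every $D_j\in\boldsymbol{F}(f)$ and filling in over each annulus $A_j$ by lifting the relation $\phi\circ f=g\circ\psi$. Because $f\colon A_j\to A$ and $g\colon A_j(g)\to A(g)$ are both degree-$d_j$ annulus coverings, a continuous lift exists with no $\pi_1$-obstruction, but is determined only modulo the deck group; the discrete ambiguity I would record as an integer $k_j$ and absorb by pre-composing $f$ with the Dehn twist $T_j^{k_j}$ along $A_j$. Setting $f_{\vec{k}}=T^{k}\circ f\circ T_1^{k_1}\circ\cdots\circ T_{n+1}^{k_{n+1}}$, one then has $\phi\circ f_{\vec{k}}=g\circ\psi$ by construction, $\phi$ holomorphic on $U$, and $\phi=\psi$ on $P_{f_{\vec{k}}}\cup U$ (since $P_{f_{\vec{k}}}=P_f$, as the twist supports miss the post-critical set, and both maps coincide with the $\psi_j$ on each Fatou component).

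The main obstacle is condition (d) of the c-equivalence definition: $\phi$ and $\psi$ must be isotopic rel $P_{f_{\vec{k}}}\cup U$. On the finite-type surface $\widehat{\mathbb{C}}\setminus U$, the mapping class group rel boundary is generated by Dehn twists along the core curves of $A$ and of the $A_j$, and the relative twist coordinates of $\phi^{-1}\circ\psi$ are precisely the integers introduced in the previous two steps; choosing $k, k_1,\ldots,k_{n+1}$ to cancel them yields the isotopy. The subtle point is that a twist $T_j$ on $A_j$ pushes forward under $f$ to $T^{d_j}$ on $A$ up to isotopy, so the twist coordinates on source and target are linked; but because $k,k_1,\ldots,k_{n+1}$ are allowed to range freely over $\mathbb{Z}$, this linking only imposes compatibility rather than a genuine obstruction, and the required isotopy is always achievable. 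The redundancy it introduces is exactly what is later quantified in the degree formula of Theorem \ref{covering}.
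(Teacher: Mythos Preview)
Your plan is the same in spirit as the paper's: use the common model $\boldsymbol{m}$ to obtain conformal conjugacies $\psi_j$ on the critical Fatou components, extend to a global pair $(\phi,\psi)$, and absorb the remaining freedom into Dehn twists along $A$ and the $A_j$. The paper packages this more cleanly by first choosing a single homeomorphism $\tau:\widehat{\mathbb{C}}\to\widehat{\mathbb{C}}$ that is holomorphic on $\bigcup_j D_j(g)$ and equal to $\psi_j^{-1}$ there, and then comparing $g_\tau=\tau\circ g\circ\tau^{-1}$ with $f$ on the $f$-plane. Since $g_\tau=f$ on every $D_j$ by construction, the only discrepancy lives on the $A_j$'s, where both maps are $d_j$-fold annulus covers of $A$ with \emph{identical} boundary values; the twist integers then drop out immediately.

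One step of your write-up does not work as stated. You say the lift on $A_j$ is ``determined only modulo the deck group'' and that this ambiguity is an integer $k_j$ absorbed by pre-composing $f$ with $T_j^{k_j}$. But the deck group of $g\colon A_j(g)\to A(g)$ is the finite group $\mathbb{Z}/d_j\mathbb{Z}$, not $\mathbb{Z}$; and $T_j^{k_j}$ is the identity on $\partial A_j$, so pre-composing by it does not alter the boundary values of the lift at all. The real issue is that once you force the lift to agree with $\psi_{j-1}$ on one boundary circle of $A_j$, its value on the other circle is determined and may differ from $\psi_j$ by a nontrivial deck rotation---and neither $T_j^{k_j}$ nor $T^k$ (both identity on $f(\partial A_j)=\partial A$) can repair that mismatch. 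The paper's conjugation trick sidesteps this entirely: after pulling back by $\tau$ one has $g_\tau|_{\partial A_j}=f|_{\partial A_j}$ exactly, so no gluing obstruction arises. Your final paragraph on condition (d) is correct in outline, though note that in $\widehat{\mathbb{C}}\setminus U$ the core curves of $A$ and of all the $A_j$ are mutually isotopic, so the relevant mapping class group is just $\mathbb{Z}$, not a free product of several twist generators.
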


 \begin{proof} 
With the similar notation and ordering as $f$,  we denoted the critical Fatou components of $g$  by  $D_j (g), { j\in\mathcal{I}}$. 
 First observe that, there is a homeomorphism $\tau: \widehat{\mathbb{C}}\rightarrow \widehat{\mathbb{C}}$  mapping 
 $\bigcup_{j\in\mathcal{I}}D_j(g)$ holomorphically onto $\bigcup_{j\in\mathcal{I}}D_j$,  fixing $0, \infty$ and sending the marked points of $g$ to that of $f$. 
Then we compare the map  $g_\tau=\tau\circ g\circ \tau^{-1}$  and $f$.  
Note that  $g_\tau|_{A_j}$, $f|_{A_j}$ are both  $d_j$-fold covering maps from $A_j$ to $A=\widehat{\mathbb{C}}\setminus (\overline{D_0}\cup \overline{D_\infty})$, therefore for some suitable choice of integer  $b_j>0$, the restriction $T^{b_j}|_{A}$  can be lifted to a homeomorphism $\zeta_j: A_j\rightarrow A_j$ with $\zeta_j|_{\partial A_j}=id$, as illustrated in the following commutative diagram:
$$
\xymatrix{ & A_j\ar[r]^{g_\tau|_{A_j}}
\ar[d]_{\zeta_j}
&A  \ar[d]^{T^{b_j}|_{A}}\\
& A_j \ar[r]_{f|_{A_j}} &  A}
$$ 
One may observe that $\zeta_j$ is  isotopic to ${T^{a_j}_j}|_{A_j}$ rel $\partial A_j$ for some $a_j\in \mathbb{Z}$.
Now let $B={\rm lcm}(b_1, \cdots, b_{n+1})$ and $B_j=Ba_j/b_j$, then $g_{\tau}$  is c-equivalent to 
$$T^{-B}\circ f \circ T_1^{B_1}\circ\cdots\circ T_{n+1}^{B_{n+1}}$$
via $(id, \psi)$, where $\psi$ is isotopic to $id$ rel $\bigcup_{k\in\mathcal{I}} D_k$.  It follows that $g$ is c-equivalent to 
$T^{-B}\circ f \circ T_1^{B_1}\circ\cdots\circ T_{n+1}^{B_{n+1}}$
via $(\tau, \psi\circ \tau)$.
   \end{proof}
 
 \begin{lem} \label{good-form} For any $(k, k_1, \cdots, k_{n+1})\in \mathbb{Z}^{n+2}$,   the map  $T^k\circ f \circ T_1^{k_1}\circ\cdots\circ T_{n+1}^{k_{n+1}}$ is c-equivalent to $T^{k+\sum_{j=1}^{n+1} k_j}\circ f$.
 \end{lem}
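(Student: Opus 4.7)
The plan is to exhibit an explicit c-equivalence by taking the pair
$$\phi := T^{\sum_{j=1}^{n+1} k_j}, \qquad \psi := T_1^{k_1}\circ T_2^{k_2}\circ\cdots\circ T_{n+1}^{k_{n+1}}.$$
The defining identity $\phi\circ g_1 = g_2 \circ \psi$ with $g_1=T^k\circ f\circ \psi$ and $g_2=T^{k+\sum k_j}\circ f$ is then simply the trivial rewriting $T^{\sum k_j}\circ T^k\circ f\circ \psi = T^{k+\sum k_j}\circ f\circ \psi$, which uses nothing beyond the additivity $T^a\circ T^b=T^{a+b}$. So no analytic content enters into the equation itself; all the content is in verifying the c-equivalence axioms for this particular choice.

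Axioms (b) and (c) are essentially automatic. The supports of $T$ and each $T_j$ lie in $A=\widehat{\mathbb{C}}\setminus(\overline{D_0}\cup\overline{D_\infty})$, while $P_f\subset D_0\cup D_\infty$ and hence $P_{g_1},P'_{g_1}\subset \overline{D_0}\cup\overline{D_\infty}$ as well (the twists fix the post-critical set of $f$ setwise, and indeed pointwise near it). Choosing $U_0$ to be a small neighborhood of $P'_{g_1}$ inside $D_0\cup D_\infty$, both $\phi$ and $\psi$ are the identity on $U_0$, which gives (b) holomorphicity of $\phi$ in $U_0$ trivially, and (c) $\phi|_{P_{g_1}\cup U_0}=\psi|_{P_{g_1}\cup U_0}=\mathrm{id}$.

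The only substantive step is axiom (d): showing $\phi$ and $\psi$ are isotopic rel $P_{g_1}\cup U_0$. Because both are the identity outside $A$, this reduces to equality of mapping classes in $\mathrm{MCG}(A,\partial A)\cong \mathbb{Z}$, the isomorphism measuring the total twist along the core of $A$. By observation (2) in the text, $T_j$ is isotopic to $T$ rel $\overline{D_0}\cup\overline{D_\infty}$ (since the core of $A_j$ is freely homotopic in $A$ to the core of $A$); by observation (4) the $T_j$ commute, so $\psi$ represents the class $k_1+\cdots+k_{n+1}$, which is exactly the class of $\phi=T^{\sum k_j}$. Hence $\phi\simeq \psi$ rel $\overline{D_0}\cup \overline{D_\infty}\supset P_{g_1}\cup U_0$.

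The main conceptual point—and the only place where one could go wrong—is to resist the temptation to push the inner twists $T_j^{k_j}$ across $f$ via the naive covering-lift identity $f\circ T_j \sim T^{d_j}\circ f$, which would produce $T^{\sum d_j k_j}$ rather than the desired $T^{\sum k_j}$. C-equivalence sidesteps the covering-degree distortion entirely: one keeps the source-side twist $\psi$ on the source and matches it with an independently chosen target-side twist $\phi$, so that $f$ never needs to be inverted or lifted through.
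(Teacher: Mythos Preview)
Your proof is correct and rests on the same key observation as the paper---namely that each $T_j$ is isotopic to $T$ rel $D_0\cup D_\infty$---but you organize the argument more economically. The paper proceeds in two steps: first it conjugates $T^k\circ f\circ T_1^{k_1}\cdots T_{n+1}^{k_{n+1}}$ by $\Psi:=T_1^{k_1}\cdots T_{n+1}^{k_{n+1}}$ to obtain $\Psi\circ T^k\circ f$, and then it exhibits a c-equivalence from this to $T^{k+\sum k_j}\circ f$ via a pair $(id,\psi')$, where $\psi'$ must be constructed as a lift through $f$ of the homeomorphism $(T^{k+\sum k_j})^{-1}\circ\Psi\circ T^k$ (isotopic to the identity). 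Your choice of the pair $(\phi,\psi)=(T^{\sum k_j},\Psi)$ collapses these two steps into one: the semiconjugacy $\phi\circ g_1=g_2\circ\psi$ is a tautology, and the isotopy $\phi\simeq\psi$ rel $D_0\cup D_\infty$ is immediate from observation~(2). In particular you avoid the lifting step altogether, which is a small but genuine simplification. The closing paragraph warning against the relation $f\circ T_j\sim T^{d_j}\circ f$ is a correct cautionary remark, though not strictly needed for the argument.
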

 \begin{proof}  Clearly  $T_1^{k_1}\circ\cdots\circ T_{n+1}^{k_{n+1}}$ conjugate  $T^k\circ f \circ T_1^{k_1}\circ\cdots\circ T_{n+1}^{k_{n+1}}$
 to $T_1^{k_1}\circ\cdots\circ T_{n+1}^{k_{n+1}}\circ T^k\circ f $, which is c-equivalent to $T^{k+\sum_{j=1}^{n+1} k_j}\circ f$ via a pair  $(id, \psi)$ of isotopic homeomorphisms.
  \end{proof}
 
 In the following, set
 $$N={\rm lcm}(d_1, \cdots, d_{n+1}), \  N^*=\Big(1-\sum_{j=1}^{n+1}\frac{1}{d_j}\Big){\rm lcm}(d_1, \cdots, d_{n+1}).$$
 
  \begin{lem}\label{periodic-c-e}   For any integer $\ell\in \mathbb{Z}$, the map $T^\ell\circ f$ is c-equivalent to $T^{N^*+\ell}\circ f$.
 \end{lem}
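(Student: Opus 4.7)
The plan is to exploit the covering structure of $f$ over the annulus $A=\widehat{\mathbb{C}}\setminus(\overline{D}_0\cup\overline{D}_\infty)$: each restriction $f|_{A_k}\colon A_k\to A$ is a degree-$d_k$ holomorphic cover, so the $d_k$-th power $T^{d_k}$ of the full twist along $A$ lifts, via $f|_{A_k}$, to a self-homeomorphism of $A_k$ isotopic rel $\partial A_k$ to $T_k$. The choice $N=\mathrm{lcm}(d_1,\dots,d_{n+1})$ is dictated by the requirement that $T^N$ lifts to an honest integer power $T_k^{N/d_k}$ on each $A_k$, and the defect $N^* = N - \sum_k N/d_k$ will emerge as the desired period after comparing with Lemma \ref{good-form}.

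The first main step is to establish the lifting identity: $T^{N}\circ f$ is c-equivalent to $f\circ T_1^{N/d_1}\circ\cdots\circ T_{n+1}^{N/d_{n+1}}$. I would lift $T^N|_A$ via $f|_{A_k}$ to the unique self-homeomorphism $\widetilde{T}_k$ of $A_k$ that is the identity on $\partial A_k$; a direct computation after conformally normalizing $f|_{A_k}$ to $z\mapsto z^{d_k}$ shows that $\widetilde{T}_k$ is isotopic rel $\partial A_k$ to $T_k^{N/d_k}$. Gluing these lifts by the identity outside $\bigcup_k A_k$ defines a homeomorphism $\widetilde{T}$ of $\widehat{\mathbb{C}}$ with $T^N\circ f = f\circ \widetilde{T}$ as a strict equality, and $\widetilde{T}$ is isotopic to $\prod_k T_k^{N/d_k}$ via an isotopy supported in $\bigcup_k A_k\subset \widehat{\mathbb{C}}\setminus P_f$. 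This isotopy furnishes the claimed c-equivalence (via the pair $(\mathrm{id},\widetilde{T}^{-1}\circ\prod_k T_k^{N/d_k})$).

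The second main step is to invoke Lemma \ref{good-form} with parameters $(\ell,N/d_1,\dots,N/d_{n+1})$, which gives that $T^\ell\circ f\circ T_1^{N/d_1}\circ\cdots\circ T_{n+1}^{N/d_{n+1}}$ is c-equivalent to $T^{\ell+N-N^*}\circ f$. Composing the lifting identity on the left with $T^\ell$ shows that this same expression is c-equivalent to $T^{\ell+N}\circ f$, so by transitivity of c-equivalence, $T^{\ell+N}\circ f$ is c-equivalent to $T^{\ell+N-N^*}\circ f$. Since $\ell\in\mathbb{Z}$ is arbitrary, reindexing (replace $\ell+N$ by a fresh $\ell$) yields $T^{\ell+N^*}\circ f$ c-equivalent to $T^\ell\circ f$, which is the lemma.

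The main obstacle is the lifting step: one must verify that the unique boundary-trivial lift of $T^N$ under $f|_{A_k}$ is isotopic rel $\partial A_k$ to $T_k^{N/d_k}$ and not to some other element of the mapping class group $\pi_0(\mathrm{Homeo}^+(A_k,\partial A_k))\cong\mathbb{Z}$. This is precisely where the choice $N=\mathrm{lcm}(d_1,\dots,d_{n+1})$ is essential: the fractional twist obtained by lifting a single copy of $T$ becomes a well-defined integer power of the Dehn twist only after raising to an exponent divisible by every $d_k$. Once this identification is pinned down, the remaining combinatorial accounting is exactly what Lemma \ref{good-form} was designed to handle.
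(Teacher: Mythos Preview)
Your argument is correct and is essentially the same as the paper's: both proofs rest on the observation that the $d_k$-fold cover $f|_{A_k}$ lifts $T^N|_A$ to (a homeomorphism isotopic rel $\partial A_k$ to) $T_k^{N/d_k}$, and then invoke Lemma~\ref{good-form} to convert the right-hand twists into a left-hand power of $T$. The paper packages the lifting step as ``$f$ is c-equivalent to $f_N=T^N\circ f\circ T_1^{-N/d_1}\cdots T_{n+1}^{-N/d_{n+1}}$ via $(\mathrm{id},\psi)$'' and reduces to $\ell=0$ at the outset, but this is only a cosmetic rearrangement of your identity $T^N\circ f = f\circ\widetilde{T}$; note also that your displayed pair should read $(\mathrm{id},(\prod_k T_k^{N/d_k})^{-1}\circ\widetilde{T})$ rather than its inverse, though this does not affect the argument.
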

 
 \begin{proof} It suffices to prove the case $\ell=0$.   Let's consider the action of  the following map
 $$f_N=T^N\circ f \circ T_1^{-N/d_1}\circ\cdots\circ T_{n+1}^{-N/d_{n+1}}$$
 on  the annuli $A_1, \cdots,  A_{n+1}$. Note that $A_j$ is first twisted $-N/d_{j}$ times by  $T_j^{-N/d_{j}}$, then the twist time is multiplied by $d_{j}$ because of the $f$-action, and finally the $T^N$-action
 contributes  $+N$ additional twist times.  Therefore the total twist time of $f_N$ on $A_j$ is $N+d_{j}\times(-N/d_{j})=0$, 
 equal to the twist time of $f$ on $A_j$.
 
 So 
 we can lift  the restriction of the  identity map on $A$ to get a homeomorphism $\zeta_j: A_j\rightarrow A_j$, isotopic to $id|_{A_j}$ rel the boundary $\partial{A_j}$,
 see the following commutative diagram
 $$
\xymatrix{ & A_j\ar[r]^{f|_{A_j}}
\ar[d]_{\zeta_j}
&A  \ar[d]^{id|_{A}}\\
& A_j \ar[r]_{f_N|_{A_j}} &  A}
$$ 

By gluing the maps $\zeta_j, id|_{D_k}$ together, 
we get a homeomorphism $\psi: \widehat{\mathbb{C}}\rightarrow \widehat{\mathbb{C}}$.
Then the map $f$ is c-equivalent,  via the pair  $(id, \psi)$ of isotopic homeomorphisms, to $f_N$, which is c-equivalent to  $T^{N^*}\circ f$ (by Lemma \ref{good-form}).
 \end{proof}

  \begin{lem} \label{non-c-e} For any  $0\leq m_1<m_2<N^*$, the maps  $T^{m_1}\circ f$ and $T^{m_2}\circ f$ are not  c-equivalent.
 \end{lem}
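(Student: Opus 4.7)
Assume, for contradiction, that $T^{m_1}\circ f$ and $T^{m_2}\circ f$ are c-equivalent via a pair $(\phi,\psi)$, and set $k=m_2-m_1\in(0,N^*)$. The plan is to read off the Dehn-twist coordinate of $\psi$ on each sub-annulus $A_j$ from the c-equivalence relation via a lifting argument, and then to show that the global compatibility forces $N^*\mid k$. As a first step I would use the attracting dynamics together with the genericity conditions (C1)--(C3) to normalize $\phi$ and $\psi$ to the identity on $\overline{D_0\cup D_\infty\cup\bigcup_{i=1}^n D_i}$. Indeed, since $T^{m_j}$ is the identity on $\overline{D_0\cup D_\infty}$, we have $g_j=f$ there, and the standard iteration argument in the attracting basin (parallel to Step~3 of Proposition~\ref{surj}), combined with (C1)--(C2) to kill automorphisms of the models $m_0,m_\infty$, gives $\phi=\psi=\mathrm{id}$ on $\overline{D_0\cup D_\infty}$. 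On each critical annular component $D_i$, the fact that $f(D_i)\subset D_0\cup D_\infty$ collapses the c-equivalence equation to $f|_{D_i}=f|_{D_i}\circ\psi|_{D_i}$, so $\psi|_{D_i}$ is a topological deck transformation of $f|_{D_i}$; holomorphicity across critical points and (C3) then force it to be the identity.

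Next I would carry out the Dehn-twist computation. Let $\tau_j^\phi,\tau_j^\psi\in\mathbb{Z}$ denote the Dehn-twist coordinates of $\phi|_{A_j},\psi|_{A_j}$ in $\mathrm{MCG}(A_j,\partial A_j)\cong\mathbb{Z}$, and set $\tau^\phi=\sum_{j=1}^{n+1}\tau_j^\phi$. Restricting the c-equivalence equation to $A_j$ presents $\psi|_{A_j}$ as the lift, identity on $\partial A_j$, of the homeomorphism $T^{-m_2}\circ\phi\circ T^{m_1}$ of $A$ under the $d_j$-fold unbranched covering $f|_{A_j}\colon A_j\to A$. A direct computation in the universal covers of these annuli (each $T^a$ acts as an integral horizontal shear by $a$, and such shears are divided by the degree under lifting through a covering of annuli) yields
\[
d_j\,\tau_j^\psi\;=\;\tau^\phi-k,\qquad j=1,\dots,n+1,
\]
so in particular $d_j\mid(\tau^\phi-k)$ for every $j$. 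The isotopy $\phi\simeq\psi$ rel $P_{g_1}\cup U_0$ says that $\phi\circ\psi^{-1}$ is trivial in the mapping class group of $\widehat{\mathbb{C}}$ rel $P_{g_1}\cup U_0$; since the core curves of $A_1,\dots,A_{n+1}$ are pairwise non-isotopic essential simple closed curves there (adjacent ones are separated by a critical annular $D_i$, which contains at least one critical and hence post-critical point of $f$), the corresponding Dehn twists are linearly independent, and we conclude $\tau_j^\phi=\tau_j^\psi=:\tau_j$ for every $j$.

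Combining these identities gives $d_j\tau_j=S-k$ for every $j$, where $S=\sum_j\tau_j$. Hence $N=\mathrm{lcm}(d_1,\dots,d_{n+1})$ divides $S-k$; write $S-k=Nm$ with $m\in\mathbb{Z}$. Summing $\tau_j=(S-k)/d_j$ over $j$ and using $S=Nm+k$ leads to
\[
k\;=\;Nm\Big(\sum_{j=1}^{n+1}\tfrac{1}{d_j}-1\Big)\;=\;-m\cdot N\Big(1-\sum_{j=1}^{n+1}\tfrac{1}{d_j}\Big)\;=\;-m\,N^*,
\]
so $N^*\mid k$, contradicting $0<k<N^*$. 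The step I expect to be the main obstacle is the linear independence of Dehn twists in the mapping class group rel $P_{g_1}\cup U_0$: because $P_{g_1}$ is infinite and accumulates only on $\{0,\infty\}\subset U_0$, one must carefully set up the mapping class group of $\widehat{\mathbb{C}}\setminus(P_{g_1}\cup U_0)$ and verify the pairwise non-isotopy of the core curves of the $A_j$, which ultimately reduces to locating a post-critical point between each consecutive pair of cores.
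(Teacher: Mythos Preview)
Your overall strategy is the same as the paper's: normalize the c-equivalence pair to be the identity on $\bigcup_{k\in\mathcal{I}} D_k$ using the pullback argument and (C1)--(C3), read off Dehn-twist coordinates on the $A_j$, derive the linear system, and solve to get $N^*\mid k$. The algebra in your final paragraph is exactly the computation in the paper.

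There is, however, a genuine error in your linear-independence step. You assert that adjacent core curves are separated by a $D_i$ ``which contains at least one critical and hence post-critical point of $f$''. This implication is false: a critical point is not automatically post-critical. In fact $P_{g_1}=P_f\subset D_0\cup D_\infty$, since every critical point of $f$ lies in some $D_k$ and $f(D_k)\subset D_0\cup D_\infty$ for all $k\in\mathcal{I}$. Consequently the core curves of $A_1,\dots,A_{n+1}$ are all \emph{mutually isotopic} in $\widehat{\mathbb{C}}\setminus(P_{g_1}\cup U_0)$, and you cannot conclude $\tau_j^\phi=\tau_j^\psi$ for each $j$ from $\phi\simeq\psi$ rel $P_{g_1}\cup U_0$.

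The fix is easy, and your own equations already contain it. From $d_j\tau_j^\psi=\tau^\phi-k$ you only need the \emph{total} twist equality $\tau^\phi=\tau^\psi=:S$; then $\tau_j^\psi=(S-k)/d_j$, summing gives $S=(S-k)\sum_j 1/d_j$, and together with $N\mid(S-k)$ you get $N^*\mid k$ exactly as you wrote. The total-twist equality does follow from $\phi\simeq\psi$ rel $P_{g_1}\cup U_0$, because the common isotopy class of the core curves is essential there (it separates post-critical points in $D_0$ from those in $D_\infty$, and both sets are nonempty). The paper avoids this subtlety altogether by working rel $\bigcup_{k\in\mathcal{I}} D_k$ rather than rel $P_{g_1}\cup U_0$: once $\psi_0$ is the identity on $\bigcup_k D_k$, it is isotopic rel $\bigcup_k D_k$ to a product $T_1^{k_1}\cdots T_{n+1}^{k_{n+1}}$, and the individual $k_j$ are well-defined without any appeal to the c-equivalence isotopy.
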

 
 \begin{proof}   If not, suppose that $T^{m_1}\circ f$ and $T^{m_2}\circ f$ are  c-equivalent via $(\phi_0, \phi_1)$.  
 Then by  lifting, there is a sequence of  homeomorphisms $\phi_j: \widehat{\mathbb{C}}\rightarrow \widehat{\mathbb{C}}$
  so that $\phi_j\circ (T^{m_1}\circ f)=(T^{m_2}\circ f)\circ \phi_{j+1}$, and $\phi_j$ is isotopic to $\phi_{j+1}$ rel $P_f$.
 Since $\phi_0$ is holomorphic in a neighborhood of attracting   cycles, it's not hard to see that   the restrictions $\phi_j|_{\bigcup_{k\in\mathcal{I}} D_k}$ converge to a conformal map
 $\phi:  \bigcup_{k\in\mathcal{I}} D_k\rightarrow  \bigcup_{k\in\mathcal{I}} D_k$ as $j\rightarrow \infty$.
 Therefore in the isotopy class of $\phi_0$, there is a pair of homeomorphisms $\psi_0,\psi_1$ so that $T^{m_1}\circ f$ and $T^{m_2}\circ f$ are  c-equivalent via $(\psi_0, \psi_1)$, and that $\psi_0$ and $\psi_1$  are holomorphic in $\bigcup_{k\in\mathcal{I}} D_k$.  The assumptions (C1,C2,C3) imply that 
 $$\psi_0|_{D_k}=\psi_1|_{D_k}=id|_{D_k},   \ \forall \ k \in\mathcal{I}.$$
 
 It turns out that $\psi_0$ is isotopic to $T_1^{k_1}\circ\cdots\circ T_{n+1}^{k_{n+1}}$ rel $\bigcup_{k\in\mathcal{I}} D_k$. 
 Thus  $T^{m_1}\circ f$ and $T^{m_2}\circ f$ are  c-equivalent via $(T_1^{k_1}\circ\cdots\circ T_{n+1}^{k_{n+1}}, \psi)$. In other words, $T^{m_1+\sum_{i=1}^{n+1}k_i}\circ f$
 is c-equivalent to $T^{m_2}\circ f\circ T_1^{k_1}\circ\cdots\circ T_{n+1}^{k_{n+1}}$ via $(id, \tilde{\psi})$. Compare the twist times of the annuli $A_1,\cdots, A_{n+1}$, under the actions of 
 $T^{m_1+\sum_{i=1}^{n+1}k_i}\circ f$   and  $T^{m_2}\circ f\circ T_1^{k_1}\circ\cdots\circ T_{n+1}^{k_{n+1}}$, we have the following equations
 $$m_2+d_jk_j=m_1+\sum_{i=1}^{n+1}k_i, \  \ 1\leq j\leq n+1.$$
 Therefore $d_j$ is a divisor of $m_1-m_2+\sum_{i=1}^{n+1}k_i$. It follows that ${\rm lcm}(d_1,\cdots, d_{n+1})$ is a divisor of $m_1-m_2+\sum_{i=1}^{n+1}k_i$.
 From these  equations, we get 
 $$\sum_{i=1}^{n+1}k_i=\frac{\sum_{i=1}^{n+1}\frac{1}{d_i}}{1-\sum_{i=1}^{n+1}\frac{1}{d_i}}(m_1-m_2).$$
 So ${\rm lcm}(d_1,\cdots, d_{n+1})$ is a divisor of $(m_1-m_2)/(1-\sum_{i=1}^{n+1}\frac{1}{d_i})$. 
 Put another way, $N^*$ is a divisor of $m_1-m_2$.
 This is a contradiction since $0<|m_1-m_2|<N^*$.
 \end{proof}

  \begin{lem} \label{cui-tan} For any  $0\leq k<N^*$, the map  $T^{k}\circ f$ is  c-equivalent a rational map $R$, unique up to M\"obius conjugation.
 \end{lem}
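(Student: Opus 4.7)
The plan is to derive both the existence and uniqueness assertions from the Cui--Tan theory \cite{CT} already invoked in the proof of Proposition \ref{surj}. First, I would verify that $T^{k}\circ f$ is a semi-rational branched cover. The twist $T$ is supported on $A=\widehat{\mathbb{C}}\setminus(\overline{D}_0\cup\overline{D}_\infty)$ and equals the identity on a neighborhood of each attracting cycle of $f$. Since $P_f\subset D_0\cup D_\infty$ and the accumulation set $P'_f$ lies at (or on the orbit of) the attracting periodic points, the map $T^{k}\circ f$ agrees with $f$ near $P'_{T^{k}\circ f}=P'_f$. Hence $T^{k}\circ f$ is holomorphic near its periodic post-critical accumulation points, and the attracting nature there is preserved, so it is semi-rational.

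Next, I would observe that the mapping scheme, the partition vector $\boldsymbol{d}=(d_1,\dots,d_{n+1})$, and the ordering of the annuli $A_1,\dots,A_{n+1}$ are unaffected by post- or pre-composition with the twists, because $T$ and the $T_j$ are homeomorphisms that preserve the nested annular structure. In particular, the inequality $\sum_{k=1}^{n+1}1/d_k<1$ continues to hold; by the interpretation given in Section \ref{cov} (following \cite{CT}), this is exactly the absence of Thurston obstruction. Applying the relevant case of Cui--Tan's realization theorem \cite[Section 6.2]{CT} then produces a rational map $R$ which is c-equivalent to $T^{k}\circ f$. This gives existence.

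For uniqueness, suppose $R_1$ and $R_2$ are two rational maps, each c-equivalent to $T^{k}\circ f$; composing the equivalences yields a c-equivalence $(\phi_0,\phi_1)$ between $R_1$ and $R_2$. By the rigidity part of Cui--Tan's theorem, a c-equivalence between two geometrically finite (in our case, hyperbolic with Cantor-circle Julia set) rational maps promotes to a Möbius conjugacy: the pair $(\phi_0,\phi_1)$ is quasiconformal after a representative change in the isotopy class rel $P_{R_1}\cup U_0$, the pulled-back Beltrami differential is invariant and supported off the Julia set, and a standard surgery argument (upgrading quasiconformal conjugacy on the Fatou set to a global conformal conjugacy, using that $J(R_1)$ has Lebesgue measure zero because $R_1$ is hyperbolic) yields $R_2=\phi\circ R_1\circ\phi^{-1}$ for some Möbius transformation $\phi$. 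Hence $R$ is unique up to Möbius conjugation.

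The main obstacle is the uniqueness step. Verifying existence is essentially bookkeeping once Cui--Tan's machinery is in place, but the rigidity step requires one to carefully produce a \emph{holomorphic} identification of the two candidate rational maps out of a merely isotopy-theoretic equivalence. The key ingredients are (i) that the attracting dynamics inside $D_0,D_\infty$ determines the conformal class of the Fatou coordinates, and (ii) that hyperbolicity, combined with the Cantor-circle structure of $J(R_j)$, forces any quasiconformal conjugacy to be conformal — facts that must be cited carefully so as not to circularly depend on Theorem \ref{covering} itself.
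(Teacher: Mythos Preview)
Your proposal is correct and takes essentially the same approach as the paper: both reduce the lemma to a direct application of Cui--Tan's theorem \cite[Section 6.2 and Lemma 6.2]{CT}. The paper's proof is a one-line citation, whereas you spell out the verification that $T^{k}\circ f$ is semi-rational, that the annular combinatorics (and hence the obstruction-free inequality $\sum 1/d_k<1$) is preserved under twisting, and you sketch the standard rigidity argument for uniqueness; these are useful clarifications but not a different route.
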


\begin{proof} This is exactly a special case of Cui-Tan's Theorem, see \cite[Section 6.2 and Lemma 6.2]{CT}.
\end{proof}

Now everything is ready to prove Theorem \ref{covering}.

\vspace{6 pt}

\noindent {\it Proof of Theorem \ref{covering}.}  Take any two marked maps $(f_1, \nu_1),  (f_2, \nu_2)\in \rho^{-1}(\boldsymbol{m})$, 
by the above lemmas, each $(f_j, \nu_j)$ is   c-equivalent to some $T^{k_j}\circ f$ with $0\leq k_j<N^*$. 
The assumptions (C1,C2,C3) on $\boldsymbol{m}$ imply that either $(f_1, \nu_1)=(f_2, \nu_2)$ or $\langle f_1\rangle \neq\langle f_2\rangle$. In the latter case,  it follows 
that $T^{k_1}\circ f$ and
$T^{k_2}\circ f$ are not c-equivalent, implying that $k_1\neq k_2$. 
Hence ${\rm deg}(\rho)\leq N^*$.

On the other hand, for any  $0\leq k<N^*$, by Lemma \ref{cui-tan}, the map $T^k\circ f$ is c-equivalent to a rational map, say $h$.  
By Step 3 in the proof of Proposition \ref{surj},  there  is $h_0\in \langle h\rangle$  with a boundary marking $\nu_0$, so that $(h_0, \nu_0)\in \mathcal{F}^{\chi_0}_{\sigma,\boldsymbol{d}}$ and $\rho((h_0, \nu_0))=\boldsymbol{m}$. Again by the assumptions (C1,C2,C3),  such pair $(h_0, \nu_0)$ is unique, implying that ${\rm deg}(\rho)= N^*$.  \hfill\fbox

\vspace{4 pt}

As an immediate consequence of Theorem \ref{covering}, we have
\begin{cor}\label{onto} The map $\rho:  \mathcal{F}^{\chi_0}_{\sigma,\boldsymbol{d}}\rightarrow  \mathbf{M}_\sigma$ is a homeomorphism if and only if  
$$\sum_{j=1}^{n+1}\frac{1}{d_j}+\frac{1}{{\rm lcm}(d_1, \cdots, d_{n+1})}=1.$$ 
\end{cor}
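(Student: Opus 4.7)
The plan is to derive Corollary \ref{onto} directly by combining the two main results already proved in this section, namely that $\rho$ is a covering map (Theorem \ref{covering-p}) and that its degree is $N^* = (1-\sum 1/d_k)\,\mathrm{lcm}(d_1,\ldots,d_{n+1})$ (Theorem \ref{covering}). The only conceptual ingredient beyond the algebraic rearrangement is the standard topological fact that a covering map over a connected base is a homeomorphism if and only if it has degree one.

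First I would verify the connectedness of the target $\mathbf{M}_\sigma$; this is immediate from Corollary \ref{model-space1}, which identifies $\mathbf{M}_\sigma$ with $\mathbb{R}^{4d-4-n}\times\mathbb{T}^n$. Hence a covering map onto $\mathbf{M}_\sigma$ is a bijection iff each fibre has cardinality one, and since $\rho$ is a local homeomorphism by Theorem \ref{covering-p}, such a bijection is automatically a homeomorphism. Thus $\rho$ is a homeomorphism iff $\deg(\rho)=1$.

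Next I would apply Theorem \ref{covering} and set
\[
\left(1-\sum_{k=1}^{n+1}\frac{1}{d_k}\right)\mathrm{lcm}(d_1,\ldots,d_{n+1}) = 1.
\]
Dividing both sides by $\mathrm{lcm}(d_1,\ldots,d_{n+1})$ and rearranging yields exactly
\[
\sum_{k=1}^{n+1}\frac{1}{d_k} + \frac{1}{\mathrm{lcm}(d_1,\ldots,d_{n+1})} = 1,
\]
proving the equivalence. There is no real obstacle here: the corollary is a one-line consequence of Theorem \ref{covering} once one notes that a covering of a connected space is a homeomorphism precisely when it has mapping degree one. The only thing worth double-checking is that the admissibility inequality $\sum 1/d_k<1$ keeps $N^*$ a positive integer so the equation $N^*=1$ makes sense, which it does by construction.
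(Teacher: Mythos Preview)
Your proposal is correct and matches the paper's approach exactly: the paper simply states the corollary ``as an immediate consequence of Theorem \ref{covering}'' without writing out any argument, and the details you supply (connectedness of $\mathbf{M}_\sigma$ from Corollary \ref{model-space1}, covering property from Theorem \ref{covering-p}, then setting $\deg(\rho)=1$ and rearranging) are precisely the implicit steps.
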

Note that in this case, the space  $\mathcal{F}^{\chi_0}_{\sigma,\boldsymbol{d}}$ is connected. 
There are various combinations of $d_k$'s  realizing this case. For example, if $n=1$, one may take $\{d_1, d_2\}=\{2,3\}$; if $n=2$, one may set $\{d_1, d_2, d_3\}=\{2,3,7\}, \{3,3,4\},$ etc.

\vspace{5pt}


At the end of this section, we pose the following problem:

\begin{que}\label{connectivity}  What is the action of the covering transformation group of $\rho$ on (each component of) $\mathcal{F}^{\chi_0}_{\sigma,\boldsymbol{d}}$? \  Is $\mathcal{F}^{\chi_0}_{\sigma,\boldsymbol{d}}$ always connected?
\end{que} 


  \section{Global topology of marked hyperbolic component} \label{top_marked}

In the section, we  show

%
%

\begin{thm}\label{marked-comp}  Let $\mathcal{F}$ be any  connected component of  $\mathcal{F}^{\chi_0}_{\sigma,\boldsymbol{d}}$,  then $\mathcal{F}$ is homeomorphic to $\mathbb{R}^{4d-4-n}\times \mathbb{T}^n.$
\end{thm}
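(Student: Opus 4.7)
The plan is to combine the finite covering property of $\rho$ established in Theorems \ref{covering-p} and \ref{covering} with the model space identification $\mathbf{M}_\sigma \simeq \mathbb{R}^{4d-4-n}\times\mathbb{T}^n$ from Corollary \ref{model-space1}, and then apply the classification of covering spaces by subgroups of the fundamental group.

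First I would restrict: since $\mathbf{M}_\sigma$ is connected and $\rho:\mathcal{F}^{\chi_0}_{\sigma,\boldsymbol{d}}\to\mathbf{M}_\sigma$ is a covering map (Theorem \ref{covering-p}), the restriction $\rho|_{\mathcal{F}}:\mathcal{F}\to\mathbf{M}_\sigma$ is again a covering map. Its degree is at most $\deg(\rho)=N^*$ given by Theorem \ref{covering}, hence finite. So we have placed $\mathcal{F}$ as a finite connected cover of a concrete model space whose topology we fully understand.

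Next I would invoke covering space theory over $\mathbf{M}_\sigma$. The universal cover of $\mathbb{R}^{4d-4-n}\times\mathbb{T}^n$ is $\mathbb{R}^{4d-4}$, with deck transformation group $\pi_1(\mathbf{M}_\sigma)\cong\mathbb{Z}^n$ acting by translations on the $\mathbb{R}^n$ factor (trivially on the $\mathbb{R}^{4d-4-n}$ factor). The connected finite covering $\rho|_{\mathcal{F}}$ corresponds to a finite-index subgroup $H\leq\mathbb{Z}^n$, giving $\mathcal{F}\cong\mathbb{R}^{4d-4}/H$. Now any finite-index subgroup of $\mathbb{Z}^n$ is itself free abelian of rank $n$, so it embeds in $\mathbb{R}^n$ as a full-rank lattice $\Lambda_H$. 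Since $H$ acts only on the $\mathbb{R}^n$ factor,
$$\mathcal{F}\;\cong\;\mathbb{R}^{4d-4-n}\times(\mathbb{R}^n/\Lambda_H)\;\cong\;\mathbb{R}^{4d-4-n}\times\mathbb{T}^n,$$
which is the desired conclusion.

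The main obstacle is really a point of bookkeeping: one must verify that $\mathcal{F}^{\chi_0}_{\sigma,\boldsymbol{d}}$ is locally path-connected and semi-locally simply connected in order to apply the classification of covering spaces, and that the deck-group analysis above legitimately identifies $\mathcal{F}$ with $\mathbb{R}^{4d-4}/H$ as topological spaces (not merely up to homotopy equivalence). Both follow from the fact, noted in Section \ref{boundary-marking}, that $\mathcal{F}^{\chi_0}_{\sigma,\boldsymbol{d}}$ inherits a topological manifold structure from $\mathrm{Rat}_d$ via the boundary marking; since the covering map $\rho$ is a local homeomorphism onto the manifold $\mathbf{M}_\sigma$, the standard machinery applies directly and gives a genuine homeomorphism in the final step.
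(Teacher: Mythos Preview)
Your proof is correct and follows essentially the same approach as the paper: both restrict $\rho$ to a finite connected cover of $\mathbf{M}_\sigma\simeq\mathbb{R}^{4d-4-n}\times\mathbb{T}^n$, identify the covering with a finite-index subgroup $H\leq\mathbb{Z}^n$, and use that such $H$ is a rank-$n$ lattice to conclude $\mathcal{F}\cong\mathbb{R}^{4d-4-n}\times\mathbb{T}^n$. The only cosmetic difference is that the paper builds an explicit linear self-cover $L(X,Y)=(X,B^tY)$ of $\mathbb{R}^{4d-4-n}\times\mathbb{T}^n$ realizing $H$ and then applies the lifting criterion, whereas you phrase the same step via the universal cover $\mathbb{R}^{4d-4}$ and the quotient $\mathbb{R}^{4d-4}/H$.
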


\begin{proof} The proof is a pure algebraic topology argument.  First, it is known from Theorems \ref{covering-p} and \ref{covering} that $\rho:  \mathcal{F}^{\chi_0}_{\sigma,\boldsymbol{d}}\rightarrow \mathbf{M}_\sigma$  is a finite-to-one  covering map.  In particular, the proof of  Theorem \ref{covering-p} implies that both $\rho(\mathcal{F})$ and $\mathbf{M}_\sigma\setminus\rho(\mathcal{F})$ are open in $\mathbf{M}_\sigma$. 
 Therefore $\rho(\mathcal{F})=\mathbf{M}_\sigma$, and the restriction of $\rho$ on $\mathcal{F}$, still denoted by $\rho$,  is  a finite-to-one covering map.
Chose a base point $\tau_0=(f_0, \nu_0)\in  \mathcal{F}$, let  $\rho(\tau_0)=\boldsymbol{m}_0=(m_0, m_\infty, m_1, \cdots, m_n)\in \mathbf{M}_\sigma$  be its model map.
By  \cite[Proposition 1.31]{H}, we know that $\rho$ induces  an  injective group homomorphism  
$$\rho_*: 
\begin{cases}
\pi_1({\mathcal{F}},\tau_0)\rightarrow \pi_1(\mathbf{M}_\sigma, \boldsymbol{m}_0), \\
 [\gamma]\mapsto [\rho(\gamma)] ,
\end{cases}$$
where $\gamma$ is a  loop in $\mathcal{F}$ starting and ending at $\tau_0$, and the notation  $[\beta]$ means the homotopy class of the loop $\beta$ in the underlying topological space.

Recall that   $\mathbf{M}_\sigma$ is the Cartesian product of the spaces $ \boldsymbol{B}^{\rm{fc}}_{d_1}$, $\boldsymbol{B}^{\rm{fc}}_{d_{n+1}}$ (or $\boldsymbol{B}^{\rm{zc}}_{d_{n+1}}$), and  $\boldsymbol{A}(d_k+d_{k+1}, \delta_k),  1\leq k \leq n$.   By Theorem \ref{model-space}, for each $1\leq k\leq n$, we may find a loop 
 $\gamma_k $  in $\boldsymbol{A}(d_k+d_{k+1}, \delta_{k})$ with base point $m_k$, representing  the generator  of the fundamental group. 
 The homotopy class  $[\gamma_k]$  induces a  homotopy class   $[\gamma^*_k]$ in $\mathbf{M}_\sigma$. Here the loop $\gamma^*_k: S^1\rightarrow \mathbf{M}_\sigma$ is  defined by
 $$\gamma^*_k(t)=(m_0, m_\infty, m_1,\cdots, m_{k-1},  \gamma_k(t), m_{k+1},  \cdots, m_n), \ t\in S^1.$$
 
The fundamental group $\pi_1(\mathbf{M}_\sigma, \boldsymbol{m}_0)$ of $\mathbf{M}_\sigma$ is a free   Abelian group  with $n$ generators, and it can be expressed  as the direct product
$$\pi_1(\mathbf{M}_\sigma, \boldsymbol{m}_0)= [\gamma^*_1]\mathbb{Z}\times \cdots \times  [\gamma^*_n]\mathbb{Z}.$$

Since $\rho:\mathcal{F}\rightarrow \mathbf{M}_\sigma$ is  finite-to-one, each loop $\gamma^*_k$ has finite order in ${\mathcal{F}}$. That is, there is a positive integer,
denoted  by  ${\rm ord}(\gamma^*_k)$, such that ${\rm ord}(\gamma^*_k)\gamma^*_k$ lifts to a loop in $\mathcal{F}$.
Therefore we have 
$${\rm ord}(\gamma^*_1)[\gamma^*_1] \mathbb{Z} \times \cdots \times {\rm ord}(\gamma^*_n)[\gamma^*_n]  \mathbb{Z}\subset \rho_*(\pi_1(\mathcal{F},\tau_0)).$$
This implies that the Abelian subgroup $ \rho_*(\pi_1(\mathcal{F},\tau_0))$  has rank $n$. By  the structure theorem of Abelian groups (see \cite[Theorem 3, page 158]{DF}, there is a basis of 
homotopy classes of 
loops
$$[\alpha_1], \cdots, [\alpha_n] \in \rho_*(\pi_1(\mathcal{F},\tau_0)).$$ 
so that 
$$\rho_*(\pi_1(\mathcal{F},\tau_0))=[\alpha_1]\mathbb{Z}\times \cdots \times  [\alpha_n]\mathbb{Z}. $$

Suppose that for each $1\leq i\leq n$,   
$$[\alpha_i]= \sum_{j=1}^n m_{ij}[\gamma^*_j],$$
where $m_{ij}$ are integers.
The matrix $B=(m_{ij})$ is reversible.  (More precisely, we remark  that ${\rm deg}( \rho|_{\mathcal{F}})=|{\rm det}(B)|$. )

 Now, we define a finite-to-one covering map 
 $$L: 
\begin{cases}\mathbb{R}^{4d-4-n}\times \mathbb{T}^n \rightarrow \mathbb{R}^{4d-4-n}\times \mathbb{T}^n , \\
(X, Y)\mapsto(X, B^tY)
\end{cases}$$
 where $X\in  \mathbb{R}^{4d-4-n}$ and $Y\in \mathbb{T}^n$.  Suppose that $P_0$ is a $L$-preimage of $\Phi(\boldsymbol{m}_0)$, and let $\Phi: \mathbf{M}_\sigma \rightarrow \mathbb{R}^{4d-4-n}\times \mathbb{T}^n$  be the homeomorphism  in Corollary \ref{model-space1}.   By the definition of $L$, we know that 
 $$L_*(\pi_1( \mathbb{R}^{4d-4-n}\times \mathbb{T}^n, P_0))=(\Phi\circ\rho)_*(\pi_1(\mathcal{F},\tau_0)).$$
 By the  lifting criterion  \cite[Proposition 1.37]{H}, there is  a unique  homeomorphism $\Psi: \mathcal{F}\rightarrow \mathbb{R}^{4d-4-n}\times \mathbb{T}^n$ with 
 $\Psi(\tau_0)=P_0$. See the following commutative diagram 
  
$$
\xymatrix{ & ({\mathcal{F}}, \tau_0) \ar[r]^{\Psi \ \ \ \  \ \ \ \  }
\ar[d]_{\rho}
& (\mathbb{R}^{4d-4-n}\times \mathbb{T}^n,  P_0)  \ar[d]^{L }\\
&(\mathbf{M}_\sigma, \boldsymbol{m}_0)  \ar[r]_{\Phi \ \ \ \  \ \ \ \ } & (\mathbb{R}^{4d-4-n}\times \mathbb{T}^n,  \Phi(\boldsymbol{m}_0))  }
$$ 
The proof is completed.
\end{proof}

 \section{Global topology of  hyperbolic component} \label{global-hyp}

 This section gives the proof of Theorem \ref{cantor-circle-locus}, restated as follows 
 
 \begin{thm}\label{glob-comp} There is a finite-to-one quotient map $\boldsymbol{q}:\mathbb{R}^{4d-4-n}\times \mathbb{T}^n \rightarrow \mathcal{H}$. In other words, $\mathcal{H}$ is homeomorphic to the quotient space $\mathbb{R}^{4d-4-n}\times \mathbb{T}^n/{\boldsymbol{q}}$, here points $\boldsymbol{x}_1, \boldsymbol{x}_2\in \mathbb{R}^{4d-4-n}\times \mathbb{T}^n$
 are equivalent if $\boldsymbol{q}(\boldsymbol{x}_1)=\boldsymbol{q}(\boldsymbol{x}_2)$.
\end{thm}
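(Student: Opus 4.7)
The plan is to exhibit $\mathcal{H}$ as a finite-to-one quotient of a single connected component $\mathcal{F}$ of the marked space $\mathcal{F}^{\chi_0}_{\sigma,\boldsymbol{d}}$ under the projection $\boldsymbol{p}:(f,\nu)\mapsto\langle f\rangle$, and then to transport the result through the homeomorphism $\mathcal{F}\cong\mathbb{R}^{4d-4-n}\times\mathbb{T}^n$ supplied by Theorem~\ref{marked-comp}. Starting from any $\langle f\rangle\in\mathcal{H}$, I take its Type-$\sigma$ normalized representative in $\mathcal{F}_{\sigma,\boldsymbol{d}}$, equip it with some boundary marking of characteristic $\chi_0$, and declare $\mathcal{F}$ to be the connected component of $\mathcal{F}^{\chi_0}_{\sigma,\boldsymbol{d}}$ containing this marked map. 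Continuity of $\boldsymbol{p}$ and path-connectedness of $\mathcal{F}$, combined with the fact that $M_d^{hyp}$ is open in $M_d$, force $\boldsymbol{p}(\mathcal{F})\subseteq\mathcal{H}$.

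The heart of the argument is to show that $\boldsymbol{q}:=\boldsymbol{p}|_{\mathcal{F}}:\mathcal{F}\to\mathcal{H}$ is a surjective, finite-to-one, open map. Finiteness of fibres is routine: any two normalized representatives of $\langle f\rangle$ differ by one of finitely many residual M\"obius transformations respecting the Type-$\sigma$ normalization, and each representative admits only finitely many boundary markings of the prescribed characteristic $\chi_0$ (bounded by a product of the degrees $\deg(f|_{D_k})$). Openness of $\boldsymbol{q}$ follows by composing the (finite) covering $\mathcal{F}^{\chi_0}_{\sigma,\boldsymbol{d}}\to\mathcal{F}_{\sigma,\boldsymbol{d}}$ with the open quotient $\pi:{\rm Rat}_d\to M_d$. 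For surjectivity I would check that $\boldsymbol{q}(\mathcal{F})$ is both open and closed in the connected set $\mathcal{H}$: openness is immediate, whereas for closedness I take $\langle g_k\rangle\in\boldsymbol{q}(\mathcal{F})$ with $\langle g_k\rangle\to\langle g\rangle\in\mathcal{H}$, lift to $(g_k,\nu_k)\in\mathcal{F}$, use the continuous (indeed holomorphic) dependence of the Fatou components on the dynamical parameter to obtain $\rho(g_k,\nu_k)\to\boldsymbol{m}(g)$ in $\mathbf{M}_\sigma$, and then invoke the finite covering property of $\rho$ (Theorems~\ref{covering-p} and~\ref{covering}) to conclude that the lifts stay in a compact subset of $\mathcal{F}$; a subsequential limit then lies in $\mathcal{F}$ and projects to $\langle g\rangle$.

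A continuous, open, finite-to-one surjection onto a Hausdorff space is automatically a quotient map, so combining $\boldsymbol{q}$ with the homeomorphism $\mathbb{R}^{4d-4-n}\times\mathbb{T}^n\to\mathcal{F}$ of Theorem~\ref{marked-comp} produces the desired finite-to-one quotient $\boldsymbol{q}:\mathbb{R}^{4d-4-n}\times\mathbb{T}^n\to\mathcal{H}$. I expect the main obstacle to be the closedness step, which simultaneously requires a careful justification that the holomorphic model $\boldsymbol{m}(g)$ depends continuously on $\langle g\rangle\in\mathcal{H}$ and a uniform use of the covering $\rho$ over all of $\mathbf{M}_\sigma$ to prevent the lifted sequence $(g_k,\nu_k)$ from escaping to the ends of the component $\mathcal{F}$.
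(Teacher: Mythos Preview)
Your approach is the same as the paper's: show that $\boldsymbol{p}|_{\mathcal{F}}:\mathcal{F}\to\mathcal{H}$ is a finite-to-one quotient map and then transport through the homeomorphism $\Psi$ of Theorem~\ref{marked-comp}. The paper organizes this a bit more cleanly by factoring $\boldsymbol{p}|_{\mathcal{F}}=\boldsymbol{p}_2\circ\boldsymbol{p}_1$, where $\boldsymbol{p}_1:\mathcal{F}\to\mathcal{E}\subset\mathcal{F}_{\sigma,\boldsymbol{d}}$ forgets the marking (a finite covering, Lemma~\ref{cov1}) and $\boldsymbol{p}_2:\mathcal{E}\to\mathcal{H}$ takes the conjugacy class (a finite, possibly branched, covering, Lemma~\ref{bcov2}); surjectivity and openness then come for free from the branched-covering structure, so no separate open-and-closed argument through $\rho$ is needed.

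One genuine gap to flag in your version: openness of $\boldsymbol{q}$ does \emph{not} follow just by ``composing with the open quotient $\pi:{\rm Rat}_d\to M_d$''. The second factor in your composition is not $\pi$ but its restriction $\pi|_{\mathcal{F}_{\sigma,\boldsymbol{d}}}$ to a proper subspace of ${\rm Rat}_d$, and the restriction of an open map to a subspace need not be open. What is actually required is that the Type-$\sigma$ normalization furnishes a continuous local section of $\pi$ over each point of $\mathcal{H}$; this holds because the normalizing data (attracting cycle, conformal barycenter of $f^{-1}(0)$ in $D_\infty$, repelling boundary fixed point on $\partial D_0$) all vary continuously with the map, but it deserves a sentence of justification. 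Once this is in place --- equivalently, once $\boldsymbol{p}_2$ is recognized as a finite branched cover --- your closedness argument via $\rho$ also becomes unnecessary, since a finite branched cover of the connected set $\mathcal{H}$ is automatically surjective.
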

 
Note that there is a natural projection from $\mathcal{F}^{\chi_0}_{\sigma,\boldsymbol{d}}$ to  ${M}_d$
$$\boldsymbol{p}:  \mathcal{F}^{\chi_0}_{\sigma,\boldsymbol{d}} \rightarrow M_d, \ \ \ \  (f,\nu)\mapsto \langle f\rangle.$$
Let $\mathcal{H}_{\sigma,\boldsymbol{d}}=\boldsymbol{p}(\mathcal{F}^{\chi_0}_{\sigma,\boldsymbol{d}})$ be $\boldsymbol{p}$'s   image. Clearly,  $\mathcal{H}_{\sigma,\boldsymbol{d}}$ contains $\mathcal{H}$ as a component.

 Let $\mathcal{F}$ be a  component of $\mathcal{F}^{\chi_0}_{\sigma,\boldsymbol{d}}$  so
  that $\boldsymbol{p}(\mathcal{F})=\mathcal{H}$. 
 To study the topology of $\mathcal{H}$, we need decompose $\boldsymbol{p}|_{\mathcal{F}}$ into two parts
$$\boldsymbol{p}_1: 
\begin{cases}
\mathcal{F}  \rightarrow\mathcal{E}\\
(f, \nu)\mapsto f
\end{cases}   \text{ and   }  \   \boldsymbol{p}_2: 
\begin{cases}
\mathcal{E}\rightarrow \mathcal{H}\\
f\mapsto \langle f\rangle
\end{cases}$$
where $\mathcal{E}=\boldsymbol{p}_1(\mathcal{F})$, which is necessarily a component of $\mathcal{F}_{\sigma,\boldsymbol{d}}$. 
Clearly, both $\boldsymbol{p}_1$ and $\boldsymbol{p}_2$ are continuous.
Since every map $f\in \mathcal{E}$ has only finitely many boundary markings $\nu$ with 
 $\chi(\nu)=\chi_0$, one may  establish

  \begin{lem} \label{cov1} The map $\boldsymbol{p}_1: \mathcal{F}\rightarrow \mathcal{E}$ is a finite-to-one covering map, whose   covering transformation group is Abelian.
 \end{lem}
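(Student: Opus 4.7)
The plan is to establish three claims in sequence: that the fiber of $\boldsymbol{p}_1$ over each $f \in \mathcal{E}$ is finite, that $\boldsymbol{p}_1$ is locally trivial (hence a covering map), and that its group of deck transformations is Abelian. The first two are routine consequences of the combinatorial structure of boundary markings combined with the analytic behavior of preperiodic repelling points; the Abelian statement is the substantive step.

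For finiteness, I would observe that a marking $\nu$ is specified by the tuple $(\nu(D_j))_{j \in \mathcal{I}}$, subject to the normalization $\nu(D_0)=1$, the dynamical relation $f(\nu(D_j)) = \nu(D_{\tau(j)})$, and the characteristic constraint $\chi(\nu)=\chi_0$. Propagating backwards through the mapping scheme, at each step $\nu(D_j)$ is forced to lie on a specified boundary arc of $\partial D_j$ and to be one of the $d_j$ preimages of $\nu(D_{\tau(j)})$ under $f|_{\partial D_j}$, so the fiber has at most $\prod_j d_j$ elements. For the covering property, each marked point $\nu(D_j)$ is preperiodic and eventually repelling (as observed in Section~\ref{boundary-marking}); by the implicit function theorem applied to an appropriate iterate with nontrivial multiplier, $\nu(D_j)$ depends holomorphically on $f\in\mathcal{E}$. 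This produces local holomorphic sections of $\boldsymbol{p}_1$ around every $f_0 \in \mathcal{E}$, one for each marking in the fiber, which together with finiteness gives the required local trivialization.

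For the Abelian statement, the strategy is to realize the deck group $\Gamma = \mathrm{Deck}(\boldsymbol{p}_1)$ as a subgroup of an explicit Abelian group built from the dynamics. Fix a base point $\tau_0 = (f_0, \nu_0) \in \mathcal{F}$. Given $\gamma \in \Gamma$ and $j \in \mathcal{I}$, the marked point $\gamma(\nu_0)(D_j)$ lies in the same characteristic arc of $\partial D_j$ as $\nu_0(D_j)$ and is one of the $d_j$ preimages of $\gamma(\nu_0)(D_{\tau(j)})$; relative to $\nu_0(D_j)$, it is thus obtained by a cyclic shift of some index $s_j(\gamma) \in \mathbb{Z}/d_j\mathbb{Z}$ along the fiber of $f|_{\partial D_j}$ over its image. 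I would verify that (i) the assignment $\gamma \mapsto (s_j(\gamma))_{j \in \mathcal{I}}$ is a group homomorphism into the Abelian group $\prod_j \mathbb{Z}/d_j\mathbb{Z}$, and (ii) this homomorphism is injective because a deck transformation fixing every $\nu_0(D_j)$ fixes $\tau_0$, and by unique path-lifting in the covering $\boldsymbol{p}_1$ it must then be the identity on $\mathcal{F}$. Together (i) and (ii) embed $\Gamma$ into an Abelian group, proving the claim.

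The hard part will be (i): when two deck transformations are composed, the shifts at different levels of the mapping scheme are coupled by the relation $f(\nu(D_j))=\nu(D_{\tau(j)})$, and I must check that the shifts add coherently despite this coupling. I expect this to follow from careful monodromy bookkeeping, using that $f|_{\partial D_j}\colon \partial D_j\to \partial D_{\tau(j)}$ is a $d_j$-fold topological covering of circles and that its repelling dynamics respects a natural cyclic ordering which is preserved under deformation within $\mathcal{E}$. As a backup, should the direct argument prove technically delicate, one can appeal to the global picture supplied by Theorems~\ref{covering-p} and~\ref{covering}: $\mathcal{F}$ is a finite covering of $\mathbf{M}_\sigma \simeq \mathbb{R}^{4d-4-n}\times\mathbb{T}^n$ via $\rho$, so $\pi_1(\mathcal{F})$ embeds into $\mathbb{Z}^n$ and is Abelian; one may then try to identify $\Gamma$ with a quotient of $N_{\pi_1(\mathcal{E})}(\pi_1(\mathcal{F}))/\pi_1(\mathcal{F})$ sitting inside an Abelian quotient of $\pi_1(\mathcal{E})$ detected by the cyclic shifts above.
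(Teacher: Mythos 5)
Your overall strategy is the paper's: the finiteness and local-triviality parts are treated as routine there too, and the substantive step is likewise handled by labelling the candidate marked points cyclically and showing that the group acts by coordinatewise cyclic shifts, hence embeds in a finite Abelian product of cyclic groups (the paper does this with the monodromy action of $\pi_1(\mathcal{E},f)$ on the fibre $B_f$, via an injection $\eta: B_f\to G_\sigma=\mathbb{Z}_{s_1^\sigma}\times\cdots\times\mathbb{Z}_{s_n^\sigma}\times\mathbb{Z}_{s_\infty^\sigma}$ and the identity $\eta([\gamma]\cdot\nu)=\eta(\nu)+Z_{[\gamma]}$, which is the same content as your claims (i) and (ii) phrased for deck transformations).

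There is, however, a concrete flaw in your bookkeeping that you would have to repair. Your shift $s_j(\gamma)\in\mathbb{Z}/d_j\mathbb{Z}$, defined ``along the fiber of $f|_{\partial D_j}$ over its image,'' is not well defined in general: in Types I and III the marked point $\nu(D_\infty)$ is itself movable (there are $d_{n+1}$, resp.\ $d_{n+1}-1$, choices), so for $j$ with $\tau(j)=\infty$ the points $\nu_0(D_j)$ and $\gamma(\nu_0)(D_j)$ typically lie in \emph{different} fibres of $f$, over $\nu_0(D_\infty)$ and $\gamma(\nu_0)(D_\infty)$ respectively, and there is no single fibre in which to compare them. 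Also, the per-fibre count on the relevant boundary circle is $d_j+d_{j+1}-\delta_j$ (the degree of $f$ on the circle selected by $\epsilon_j$), not $d_j$. The paper's fix is exactly what your ``coupling'' worry points at: one labels in positive cyclic order the \emph{entire} set of admissible candidates on that boundary circle, whose cardinality $s_j^\sigma$ absorbs the variation of $\nu(D_\infty)$ (e.g.\ $s_j^\sigma=(d_j+d_{j+1}-\delta_j)s_\infty^\sigma$ for odd $j$ when $\sigma=I,III$), and measures shifts in $\mathbb{Z}_{s_j^\sigma}$; with that indexing, the key translation property (your step (i)) is the observation the paper makes, and the Abelian conclusion follows. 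Finally, your fallback via $\rho$ does not work as stated: knowing $\pi_1(\mathcal{F})$ embeds in $\mathbb{Z}^n$ (so is Abelian) gives no control over $N_{\pi_1(\mathcal{E})}\bigl((\boldsymbol{p}_1)_*\pi_1(\mathcal{F})\bigr)/(\boldsymbol{p}_1)_*\pi_1(\mathcal{F})$, which is what the deck group is; the cyclic-shift argument, correctly indexed, is really needed.
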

%
 
 \begin{proof} The proof of the finite covering property of $\boldsymbol{p}_1$ is easy.  We only prove that the covering  transformation group is  Abelian.
 Fix a map $f\in \mathcal{E}$, and let $B_f=\{\nu: (f, \nu)\in \mathcal{F}\}$. 
Recall that $\{D_k\}_{k\in \mathcal{I}}$  are the critical Fatou components of $f$ (see Section \ref{dynamics}). 
 For each $k\in\{1,\cdots, n, \infty\}$, let $s_{k}^{\sigma}$ be the number of all   possible candidates of the marked point  $\nu(D_k)$.  
 It's not hard to see that  (here $1\leq j\leq n$)
 $$s_{\infty}^{I}=d_{n+1}, \ s_{\infty}^{II}=1, \ s_{\infty}^{III}=d_{n+1}-1, \ s_{j}^{II}=d_j+d_{j+1}-\delta_j,$$
 $$s_{j}^{\sigma}=d_j+d_{j+1}-\delta_j  (j \text{ even}) \text{ or } 
 (d_j+d_{j+1}-\delta_j)s^{\sigma}_{\infty} (j \text{ odd}), \ \sigma=I,III.$$
 The possible
  candidates of the marked point on the corresponding boundary component of $D_k$ are labeled in positive cyclic order by $0,1,\cdots, s_{k}^{\sigma}-1$.  In this way, we  get an injective map
 $$\eta:\begin{cases} B_f \rightarrow G_\sigma=\mathbb{Z}_{s_{1}^{\sigma}}\times \cdots\times \mathbb{Z}_{s_{n}^{\sigma}}\times  \mathbb{Z}_{s_{\infty}^{\sigma}}\\
 \nu\mapsto(i_1, \cdots, i_n, i_{n+1})\end{cases}$$
 where $i_k$ is the number so that $\nu(D_k)$ is the $i_k$-th marked point, for $k\in\{1,\cdots, n, \infty\}$,  and $\mathbb{Z}_s$ is the cyclic group of order $s\geq1$.
 
 It is important to observe that  every loop class $[\gamma]\in \pi_1(\mathcal{E}, f)$ 
 induces  an action on $B_f$ and an element $Z_{[\gamma]}\in G_\sigma$
 so that
 $$\eta([\gamma]\cdot \nu)= \eta(\nu)+Z_{[\gamma]}, \ \forall \ \nu\in B_f.$$
 In fact this $[\gamma]\cdot \nu \in B_f$ is defined so that the lift $\widetilde{\gamma}\subset\mathcal{F}$ of $\gamma$  starts at $(f, \nu)$ and 
 ends at $(f, [\gamma]\cdot \nu)$.
By the following fact
 \bess\eta([\gamma_1][\gamma_2]\cdot \nu)&=&\eta([\gamma_1]\cdot ([\gamma_2]\cdot \nu))=  
 \eta(\nu)+Z_{[\gamma_1]}+Z_{[\gamma_2]}\\&=&\eta([\gamma_2]\cdot ([\gamma_1]\cdot \nu))=
 \eta([\gamma_2][\gamma_1]\cdot \nu),\eess
we see that the covering  transformation group is  Abelian.
  \end{proof}

  \begin{lem} \label{bcov2} The map $\boldsymbol{p}_2: \mathcal{E}\rightarrow \mathcal{H}$ is a finite-to-one (possibly branched) covering map. 
 %
%
%
%
%
 \end{lem}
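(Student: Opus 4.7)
The claim decomposes into surjectivity, finite fibres, and the local (branched) covering structure. Surjectivity of $\boldsymbol{p}_2$ is immediate from the construction, since $\mathcal{F}$ was chosen so that $\boldsymbol{p}(\mathcal{F})=\mathcal{H}$ and $\boldsymbol{p}=\boldsymbol{p}_2\circ\boldsymbol{p}_1$ with $\boldsymbol{p}_1(\mathcal{F})=\mathcal{E}$, giving $\boldsymbol{p}_2(\mathcal{E})=\mathcal{H}$.

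To bound the fibres, I would analyse the subgroup of ${\rm PSL}(2,\mathbb{C})$ preserving the type-$\sigma$ normalization. If $f,g\in\mathcal{E}$ satisfy $\langle f\rangle=\langle g\rangle$, then $g=\phi\circ f\circ\phi^{-1}$ for some M\"obius $\phi$, and both maps are normalized. In Type I the conditions $f(0)=0$, $f(1)=1$, and $\infty$ being the conformal barycenter of $f^{-1}(0)$ in $D_\infty$ pin the triple $(0,1,\infty)$ and force $\phi={\rm id}$. In Type III the conditions $f(0)=0$, $f(\infty)=\infty$, $f(1)=1$ pin $(0,1,\infty)$ as well. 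In Type II the pair $\{0,\infty\}$ is pinned setwise together with the point $1$, leaving at most the involution $z\mapsto 1/z$ as an additional candidate. Hence every fibre has uniformly bounded cardinality (at most $2$).

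The branched covering property is the core of the argument, and I would treat it via a slice argument. The ${\rm PSL}(2,\mathbb{C})$-action on ${\rm Rat}_d$ is proper with orbits of complex dimension three, and every point stabilizer is finite, equal to the M\"obius automorphism group of the rational map. The normalization conditions single out three distinguished points of $\widehat{\mathbb{C}}$, so $\mathcal{E}$ sits locally as a complex submanifold transverse to the orbits. Concretely, for $\langle g\rangle$ close to $\langle f_0\rangle$ with $f_0\in\mathcal{E}$, one tracks continuously the attracting cycle (prescribed by $\sigma$), the repelling boundary fixed point of the appropriate iterate on $\partial D_0(g)$, and, in Type I, the conformal barycenter of $g^{-1}(0)$ in $D_\infty(g)$. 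The unique M\"obius transformation sending these to the standard triple brings $g$ into $\mathcal{E}$ and produces a local inverse of $\boldsymbol{p}_2$ at any map without non-trivial M\"obius automorphism. At a symmetric map $f$, the finite group ${\rm Aut}(f)\cap{\rm PSL}(2,\mathbb{C})$ acts as the local deck group, giving the branched covering structure.

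The main obstacle is the continuous dependence required in the slice argument: one must verify holomorphic dependence on $g$ of (i) the marked repelling fixed point of the appropriate iterate on $\partial D_0(g)$ and (ii), in Type I, the conformal barycenter of $g^{-1}(0)$ in $D_\infty(g)$. Both facts are standard — the former from the implicit function theorem applied to the fixed-point equation for a hyperbolic repelling cycle, the latter from the Carath\'eodory continuity of Riemann mappings under continuous deformation of the domain — but assembling them into coherent local sections compatible with the finite fibre analysis is the technical heart of the proof.
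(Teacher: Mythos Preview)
Your fibre count is wrong, and the error propagates into an internal inconsistency. In Type I you claim the conditions $f(0)=0$, $f(1)=1$, and the barycenter condition at $\infty$ force $\phi={\rm id}$. The first and third conditions do force $\phi(0)=0$ and $\phi(\infty)=\infty$, hence $\phi(z)=az$. But then $g=\phi f\phi^{-1}$ satisfies $g(1)=1$ precisely when $f(1/a)=1/a$, i.e.\ when $1/a$ is \emph{any} fixed point of $f$ on $\partial D_0$. There are $d_1-1$ such fixed points, not one, so the fibre can have up to $d_1-1$ elements. The paper makes this explicit: the fibre over $\langle f\rangle$ consists of the maps $z\mapsto f(a_k^f z)/a_k^f$ where $a_0^f=1,a_1^f,\dots$ run over the boundary fixed points (together with an additional family coming from the $0\leftrightarrow\infty$ swap in Types II and III). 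The same miscount occurs in your Type III analysis, and in Type II the relevant count is $d_1 d_{n+1}-1$ periodic points of $f^2$ on $\partial D_0$, not just the possible involution.

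This is not merely a numerical slip: it contradicts your own branching discussion. If $\phi={\rm id}$ were forced, then in particular ${\rm Aut}(f)$ would be trivial for every $f\in\mathcal{E}$, and $\boldsymbol{p}_2$ would be a homeomorphism with no branching at all. Yet nontrivial automorphisms do occur (the paper computes them in its appendix), and they are exactly what produces the branched locus. The correct picture is that the finite set of boundary fixed points parametrises the fibre, and ${\rm Aut}(f)$ acts on this set; branching happens when distinct choices of $a_k^f$ yield the same renormalised map. Your slice argument for the local covering structure away from the branched set is fine in spirit and close to what the paper does, but you need the correct fibre description to make it cohere.
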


\begin{proof}  Let $f\in \mathcal{E}$ and 
  $a^f_0=1, a^f_1, \cdots, a^f_{s_0^\sigma-1}$ be all fixed points of $f$ (if $\sigma=I, III$) or $f^{2}$ (if $\sigma=II$) on $\partial D_0$, in positive cyclic order, where $s_0^I=s_0^{III}=d_1-1, \ s_0^{II}=d_1d_{n+1}-1$.
 Note that 
 for any $\langle f\rangle\in  \mathcal{H}$, 
$$\boldsymbol{p}_2^{-1}(\langle f\rangle)=\begin{cases}
 \{ f(a_k^f\cdot)/a_k^f;   k\}\cap\mathcal{E}, & \sigma=I,\\
 \{ f(a_k^f\cdot)/a_k^f, \ b_j^f/f(b_j^f/\cdot);   k, j\}\cap\mathcal{E}, & \sigma=II \text{ or } III.
\end{cases}$$
where $b_j$'s are the fixed points of $f$ (if $\sigma=III$) or $f^2$ (if $\sigma=II$) on $\partial D_\infty$.  

Therefore the cardinality $|\boldsymbol{p}_2^{-1}(\langle f\rangle)|\leq s_0^\sigma$ if $\sigma=I$, and  $|\boldsymbol{p}_2^{-1}(\langle f\rangle)|\leq 2 s_0^\sigma$ if $\sigma=II$ or $III$,  implying that  $\boldsymbol{p}_2$ is finite-to-one.

Let ${\rm Aut}(f)$ be the automorphism group of $f$ (see Section \ref{aut-rat} for precise definition).
One may observe that $\boldsymbol{p}_2$ is a covering  map if and only if  ${\rm Aut}(f)$ is trivial for
every map $f\in\mathcal{E}$. More generally, 
the branched set of $\boldsymbol{p_2}$ is exactly $\mathcal{B}(\boldsymbol{p}_2)=\{f\in \mathcal{E}; {\rm Aut}(f) \text{ is nontrivial}\}$,  
and $\boldsymbol{p_2}$ is a covering map from $\mathcal{E}\setminus \boldsymbol{p_2}^{-1}(\boldsymbol{p_2}(\mathcal{B}(\boldsymbol{p}_2)))$ onto $\mathcal{H}\setminus \boldsymbol{p_2}(\mathcal{B}(\boldsymbol{p}_2))$.
\end{proof}

To further understand the relation between  $\mathcal{E}$ and $\mathcal{H}$, let's define 
the following group (analogous to the covering transformation group)
 $$ G(\boldsymbol{p}_2)=\{\kappa: \mathcal{E}\rightarrow \mathcal{E} \text{ is a  homeomorphism, and }\boldsymbol{p}_2\circ\kappa=\boldsymbol{p}_2\}.$$
 Note that the maps
  $$\zeta_k: 
\begin{cases}\mathcal{E}\rightarrow \mathbb{C}, \\
f\mapsto a_k^f
\end{cases}(\text{for any } \sigma), \ \  \zeta^*_k: 
\begin{cases}\mathcal{E}\rightarrow \mathbb{C}, \\
f\mapsto f(a_k^f)
\end{cases}(\text{for } \sigma=II) $$
  are well-defined. Therefore
if $\sigma=I$, then
$$ G(\boldsymbol{p}_2)=\{f\mapsto f(a_k^f\cdot)/a_k^f;  f(a_k^f\cdot)/a_k^f\in\mathcal{E} \}$$
is cyclic; if $\sigma=II$, then
$$ G(\boldsymbol{p}_2)=\{f\mapsto f(a_k^f\cdot)/a_k^f \text{ or }  f(a_k^f)/f(f(a_k^f)/\cdot) ;  f(a_k^f\cdot)/a_k^f, f(a_k^f)/f(f(a_k^f)/\cdot) \in\mathcal{E} \}$$
is either cyclic or dihedral.

In these two cases,   $\mathcal{H}$ can be identified as the quotient space  $\mathcal{E}/G(\boldsymbol{p}_2)$. 
 Here are some examples that one can write down $G(\boldsymbol{p}_2)$ explicitly. All of them satisfy the technical assumption  (see  Corollary \ref{onto})
$$\sum_{k=1}^{n+1}\frac{1}{d_k}+\frac{1}{{\rm lcm}(d_1, \cdots, d_{n+1})}=1$$ 
to guarantee that both $\mathcal{F}^{\chi_0}_{\sigma,\boldsymbol{d}}$ and $\mathcal{F}_{\sigma,\boldsymbol{d}}$ are connected. Therefore $\mathcal{E}=\mathcal{F}_{\sigma,\boldsymbol{d}}$.
 
\textbf{$G(\boldsymbol{p}_2)$ is trivial:}  Let $n=1$,  $\boldsymbol{d}=(d_1,d_2)=(2,3)$.  The map  $\boldsymbol{p}_2: \mathcal{E}\rightarrow \mathcal{H}$ is a homeomorphism (because ${\rm Aut}(f)$ is trivial for all $f\in \mathcal{E}$, by Proposition \ref{aut}),  and $G(\boldsymbol{p}_2)$ is trivial.

\textbf{$G(\boldsymbol{p}_2)$ is cyclic:}  Let $n=2$,  $\boldsymbol{d}=(d_1,d_2,d_3)=(3,3,4)$  and $\sigma=II$. The map $\boldsymbol{p}_2: \mathcal{E}\rightarrow \mathcal{H}$ is a 11-to-1 covering map (because every map $f\in \mathcal{E}$ has a trivial automorphism group, by Proposition \ref{aut}), and $G(\boldsymbol{p}_2)\cong \mathbb{Z}_{11}$.

\textbf{$G(\boldsymbol{p}_2)$ is dihedral:}   Let $n=2$, $\boldsymbol{d}=(d_1,d_2,d_3)=(6,2,6)$ and $\sigma=II$. The map $\boldsymbol{p}_2: \mathcal{E}\rightarrow \mathcal{H}$ is 70-to-1 branched cover, and $G(\boldsymbol{p}_2)\cong D_{70}$.

\vspace{8pt}

\noindent {\it Proof of Theorem \ref{glob-comp}.}  It follows from Lemmas \ref{cov1} and \ref{bcov2} that 
$\boldsymbol{p}: \mathcal{F}\rightarrow \mathcal{H}$ is a finite-to-one quotient mapping. Identifying $\mathcal{F}$ 
and $\mathbb{R}^{4d-4-n}\times \mathbb{T}^n$ via $\Psi$  (by Theorem \ref{marked-comp}), we see that 
$\mathcal{H}$ is homeomorphic to the  quotient space 
$$\mathbb{R}^{4d-4-n}\times \mathbb{T}^n/\boldsymbol{q},$$
with $\boldsymbol{q}=\boldsymbol{p}\circ \Psi^{-1}$. \hfill\fbox

\begin{rmk} An arithmetic condition imposed on $\boldsymbol{d}$ can guarantee that the quotient map $\boldsymbol{q}$ is
 a covering map, see Corollary \ref{cover}. 
\end{rmk}

\section{Appendix} \label{appendix}

This section provides the supplementary materials to the paper, including:

\begin{itemize}
 \item
  a proof of unboundedness of hyperbolic component; 
  \item
 automorphism group of rational maps in Cantor circle locus;
\item
topological space finitely covered by $\mathbb{T}^n$;
 \item
crystallographic group.
 \end{itemize}

\subsection{Unboundedness of hyperbolic components}

This part gives an alternative proof of a result due to Makienko \cite{Ma}.

\begin{thm}\label{unb} Let  $\mathcal{H}\subset{M}_d$ be a hyperbolic component  in the  disconnectedness locus, then
$\mathcal{H}$ is unbounded.
\end{thm}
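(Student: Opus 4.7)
The plan is to produce, starting from any $\langle f\rangle\in\mathcal{H}$, a one-parameter family $\{\langle f_t\rangle\}_{t\in[0,1)}\subset\mathcal{H}$ whose conformal moduli grow without bound, forcing divergence in $M_d$. Since $J(f)$ is disconnected, the Fatou set $F(f)$ contains at least one multiply connected component; by Sullivan's no-wandering-domains theorem, some forward iterate $U$ is periodic and multiply connected. I would pick a Jordan curve $\gamma\subset U$ separating two boundary components of $U$, and let $A\subset U$ be a thin annular neighborhood of $\gamma$. Because $\langle f\rangle$ is hyperbolic, $U$ (and hence $A$) lies at positive distance from the postcritical set.

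Next I would perform a quasi-conformal deformation inside $\mathcal{H}$. In a canonical annular coordinate on $A$ set $\mu_t:=t\,d\bar z/dz$, and extend by $f$-equivariance to the entire grand orbit $\mathcal{O}_f(A)$ to obtain an $f$-invariant Beltrami differential on $\widehat{\mathbb{C}}$ with $\|\mu_t\|_\infty=t<1$. Solving the Beltrami equation with the normalization $\phi_t(0)=0,\phi_t(1)=1,\phi_t(\infty)=\infty$ yields a qc-homeomorphism $\phi_t$, and the map $f_t:=\phi_t\circ f\circ \phi_t^{-1}\in{\rm Rat}_d$ is qc-conjugate to $f$, hence still hyperbolic, so $\langle f_t\rangle\in\mathcal{H}$ for every $t\in[0,1)$.

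By construction $\phi_t(A)$ sits inside a Fatou component of $f_t$ and
\[
{\rm mod}(\phi_t(A))=\frac{1+t}{1-t}\,{\rm mod}(A)\longrightarrow\infty\quad(t\to 1^-).
\]
If $\langle f_{t_n}\rangle\to\langle g\rangle\in M_d$ along some $t_n\to 1$, one may choose Möbius representatives so that $f_{t_n}\to g$ in ${\rm Rat}_d$ and the annuli $\phi_{t_n}(A)$ (suitably pre-normalized) converge geometrically to an annular region inside $F(g)$ of infinite modulus. Such a Fatou region cannot exist for a degree-$d$ rational map --- attracting basins have moduli bounded in terms of $d$, and hyperbolicity of the limit rules out Herman rings --- giving a contradiction that proves $\mathcal{H}$ is unbounded.

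The delicate point will be the final step. One must choose the Möbius normalizations so that the degenerating annulus neither collapses to a point nor escapes to $\infty$ in the limit, and so that the limit $g$ remains a degree-$d$ rational map with no loss of degree. A clean route is to pre-compose with Möbius transformations carrying a fixed pair of points on $\partial A$ to $0$ and $\infty$, then invoke standard normal-family arguments to extract a limiting annulus of infinite modulus; uniform control of the critical set along the qc-path (together with the fact that $f_t$ remains hyperbolic) then prevents any degree drop in the limit and closes the contradiction.
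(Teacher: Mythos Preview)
Your argument has a genuine gap at the outset. You assert that by Sullivan's theorem ``some forward iterate $U$ is periodic and multiply connected'', but Sullivan only gives eventual periodicity of the component, not preservation of its connectivity: a proper holomorphic map from an annulus onto a disk is precisely the object this paper studies, and in the Cantor circle locus itself every periodic Fatou component is a disk while all the annular components are strictly preperiodic. Once your annulus $A$ sits in a strictly preperiodic component, its forward images land in a periodic disk and accumulate on the attracting cycle, so the iterates $f^k(A)$ overlap one another; ``extending $\mu_t$ by $f$-equivariance to the grand orbit $\mathcal{O}_f(A)$'' then fails to produce a well-defined $f$-invariant Beltrami differential (and merely pulling back leaves $f^*\mu_t\neq\mu_t$ on $A$ itself). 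Your endgame is also not solid: the claim that ``attracting basins have moduli bounded in terms of $d$'' is not a standard fact, and you have no argument that a subsequential limit $g$ remains hyperbolic, so invoking ``hyperbolicity of the limit rules out Herman rings'' is circular.

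The paper's route is different and sidesteps all of this. It first deforms the attracting multipliers to zero, so that (if the path stays bounded) one reaches a map $g$ with only superattracting cycles; it then stretches each entire basin coherently via the Green functions, using the globally defined $g$-invariant Beltrami $\frac{\tau-1}{\tau+1}\sum_j \mathbf{1}_{A_j}\,\overline{\partial}G_g^j/\partial G_g^j$. The contradiction at the end is of a different nature from yours: any accumulation point $g_\infty$ of this stretching family is forced to be postcritically finite, hence has connected Julia set, while on the other hand it is hyperbolic and lies in the disconnectedness locus.
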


\begin{proof} Suppose $\langle f\rangle\in \mathcal{H}$ and  $f$ has $N$ attracting cycles $C_1(f), \cdots, C_N(f)$, with multiplier vector $\mathbf{\lambda}(f)=(\lambda_1,\cdots, \lambda_k, 0,\cdots,0)$,
where $\lambda_j\neq0$ is the multiplier of $f$ at the cycle $C_j(f), j=1,\cdots,k$ , and the cycles $C_j(f)$ with $k<j\leq N$ are superattracting.

Applying quasiconformal surgery, we get a continuous  family of rational maps $f_t$ with $\mathbf{\lambda}(f_t)=t\mathbf{\lambda}(f), 0<t\leq1$.
If $\langle f_t\rangle$ has no accumulation point in ${M}_d$ as $t\rightarrow0$, then $\mathcal{H}$ is unbounded.  Else, let $\langle g\rangle\in {M}_d$ be an accumulation point. {Then $g$ is hyperbolic with $N$ superattracting cycles}.

Let $G_g^j$ be the Green function of $g$ at the cycle $C_j(g)$, and $A_j$ be the whole attracting basin of $C_j(g)$.  Let $\phi_\tau$ solve the Beltrami equation:
$$\frac{\overline{\partial }\phi_\tau}{\partial \phi_\tau}=\frac{\tau-1}{\tau+1}
\sum_{j=1}^N \mathbf{1}_{A_j}\frac{\overline{\partial }G_g^j}{\partial G_g^j},\  1\leq \tau<+\infty.$$

We consider the curve $\langle g_\tau\rangle_{1\leq \tau<+\infty}$ in $\mathcal{H}$, where $g_\tau=\phi_\tau\circ g\circ \phi_\tau^{-1}$. If $\langle g_\tau\rangle$ has no accumulation point in ${M}_d$, the $\mathcal{H}$ is unbounded. Else, let $\langle g_\infty\rangle\in {M}_d$ be the limit point of some sequence $\langle g_{\tau_n}\rangle$ with
$1\leq \tau_1<\tau_2<\cdots \rightarrow\infty$. By suitably choosing representatives, we may assume $g_{\tau_n}$ converges to $g_\infty$ in $\mathbb{\widehat{C}}$.
 Then $g_\infty$ is hyperbolic, thus $J(g_\infty)$ is disconnected. On the other hand, since $g_{\tau_n}$ is the streching sequence of $g$, the map
 $g_\infty$ is necessarily postcritically finite (an interesting fact). So $J(g_\infty)$ is connected. Contradiction.
\end{proof}

The reverse of Theorem \ref{unb} is not true. An unbounded hyperbolic component can sit in the connectedness locus, see \cite{E}.

\subsection{Automorphism group of rational maps} \label{aut-rat}
 
 Recall that the automorphism group of a  rational map $f$ is defined by
 $${\rm Aut}(f)=\{\phi\in PSL(2, \mathbb{C}); \phi \circ f=f\circ \phi\}.$$
  One may view the trivial group as $\mathbb{Z}_1$, the cyclic group of order 1.
 It is known from \cite{MSW,Si} that   ${\rm Aut}(f)$ 
 is isomorphic to 
  \begin{itemize}
 \item
  a cyclic group $\mathbb{Z}_m$ of order $m\geq1$, or 
   \item
 the dihedral group $D_{2m}$ of order $2m$, or 
 \item
 the alternating groups $A_4, A_5$,  or
 \item
 the symmetric group $S_4$.
 \end{itemize}
 However, for hyperbolic rational map $f$ with Cantor circle Julia set, there are only two possibilities for ${\rm Aut}(f)$:  cyclic or dihedral. Precisely,
 

  \begin{pro} \label{aut} Let $f\in \mathcal{F}_{\sigma,\boldsymbol{d}}$, here $\boldsymbol{d}=(d_1,d_2,\cdots,d_{n+1})$.
  
  1. If $\sigma=I$, then  ${\rm Aut}(f)$ is isomorphic to $\mathbb{Z}_\ell$, where
  $$\ell|{\rm gcd}\{d_k+(-1)^k; 1\leq k\leq n+1\}.$$

  2. If  $\sigma=II$, then  ${\rm Aut}(f)$ is isomorphic to  $\mathbb{Z}_\ell$ or $D_{2\ell}$, where
  $$\ell|{\rm gcd}\{d_k-(-1)^k; 1\leq k\leq n+1\}.$$
  
   3. If $\sigma=III$, then  ${\rm Aut}(f)$ is isomorphic to $\mathbb{Z}_\ell$ or $D_{2\ell}$, where
  $$\ell|{\rm gcd}\{d_k+(-1)^k; 1\leq k\leq n+1\}.$$
 \end{pro}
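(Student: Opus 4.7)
The plan is to first restrict the M\"obius form of any $\phi \in \mathrm{Aut}(f)$ using the canonical points in the normalization of Section~\ref{dynamics}, and then to convert the conjugation equation $\phi \circ f = f \circ \phi$ into a divisibility condition by analyzing the restriction $f|_{A_k}$.

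First I would argue that every $\phi \in \mathrm{Aut}(f)$ must permute $\boldsymbol{F}(f)$ and respect the attracting-cycle structure. In Type I the only attracting fixed point is $0 \in D_0$ and $\infty$ is canonically pinned down as the conformal barycenter of $f^{-1}(0)$ in $D_\infty$; hence $\phi(0) = 0$, $\phi(\infty) = \infty$, so $\phi(z) = \lambda z$ and $\mathrm{Aut}(f) \cong \mathbb{Z}_\ell$. In Types II and III the set $\{0, \infty\}$ (an attracting $2$-cycle, respectively a pair of attracting fixed points) is preserved setwise, so $\phi(z) = \lambda z$ or $\phi(z) = \lambda/z$. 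Any such inversion is an involution and conjugates the rotation subgroup by $\mu z \mapsto \mu^{-1} z$, so $\mathrm{Aut}(f)$ is either cyclic $\mathbb{Z}_\ell$ or the dihedral extension $D_{2\ell}$.

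Next, suppose $\phi(z) = \lambda z$ has order $\ell$. Since $\phi$ commutes with $f$, the Julia set and each $A_k$, each $D_k \in \boldsymbol{F}(f)$ are $\phi$-invariant setwise. For $\ell \geq 3$ a rotation-invariant Jordan curve in $\mathbb{C}$ is a round circle centered at $0$, so one may arrange $A_k = \{r_k^- < |z| < r_k^+\}$ and $A = \{r_0 < |z| < r_\infty\}$, and in these coordinates $\phi$ acts as multiplication by $\lambda$. The restriction $f|_{A_k}: A_k \to A$, which by Section~\ref{dynamics} is an unramified degree-$d_k$ covering of annuli, is then of the form $z \mapsto c\, z^{\varepsilon_k d_k}$ with $\varepsilon_k \in \{\pm 1\}$ recording whether the inner boundary of $A_k$ maps to the inner or outer boundary of $A$. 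Substituting into $\phi \circ f = f \circ \phi$ collapses to $\lambda = \lambda^{\varepsilon_k d_k}$, that is, $\ell \mid \varepsilon_k d_k - 1$.

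Finally I would read $\varepsilon_k$ off each mapping scheme by tracking which boundary of $A_k$ sits on a Fatou component that $f$ sends to $D_0$ versus $D_\infty$. This gives $\varepsilon_k = (-1)^{k+1}$ in Types I and III and $\varepsilon_k = (-1)^k$ in Type II, so that $\ell \mid \varepsilon_k d_k - 1$ becomes $\ell \mid d_k + (-1)^k$ (Types I, III) or $\ell \mid d_k - (-1)^k$ (Type II), uniformly in $1 \leq k \leq n+1$, whence $\ell$ divides the stated gcd. An analogous analysis at the simply connected components via the Blaschke models $m_0, m_\infty$ (using that the induced boundary rotation must permute the $d_1 - 1$ boundary fixed points of $m_0$ freely, and similarly for $m_\infty$ after accounting for the orientation flip introduced by $\kappa_\infty$) recovers the $k = 1$ and $k = n+1$ conditions as a consistency check. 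The dihedral enlargement in Types II and III is then immediate from the fact that an inversion $\lambda/z$ has order $2$ and inverts the rotation subgroup. The main obstacle I anticipate is the orientation bookkeeping that fixes $\varepsilon_k$: one must chase each boundary circle of $A_k$ through the mapping scheme to decide where the adjacent critical Fatou components are sent, and separately handle the orientation-reversing effect of $\kappa_\infty$ on the $D_\infty$ side; once $\varepsilon_k$ is known, the divisibility follows at once from $\lambda^{\varepsilon_k d_k - 1} = 1$.
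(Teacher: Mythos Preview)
Your overall architecture is the same as the paper's---reduce $\mathrm{Aut}(f)$ to rotations (and, in Types II/III, an optional inversion) and then extract a divisibility from the action on the annular pieces---but the step that does the real work contains a false claim. You assert that ``for $\ell\geq 3$ a rotation-invariant Jordan curve in $\mathbb{C}$ is a round circle centered at $0$.'' Invariance under a \emph{finite} cyclic rotation group does not force roundness: any Jordan curve with $\ell$-fold symmetry (say the image of $\partial\mathbb{D}$ under $z\mapsto z(1+\varepsilon\,\mathrm{Re}\,z^{\ell})$) is a counterexample, and in fact for maps like $z^3+c/z^3$ with $c\neq 0$ one checks that no circle $\{|z|=r\}$ is mapped to a circle, so the Julia components are genuinely non-round. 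Consequently you cannot write $f|_{A_k}$ as $z\mapsto c\,z^{\varepsilon_k d_k}$ in the original coordinates, and the substitution ``$\lambda=\lambda^{\varepsilon_k d_k}$'' is unjustified. You also leave $\ell=2$ unaddressed.

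The fix is exactly what the paper does: pass to conformal models. Uniformize each $A_k$ (and $A$) to a round annulus; then $\phi$, being a conformal automorphism preserving boundary components, transports to a genuine rotation of the model annulus, and the unbranched cover $f|_{A_k}$ transports to $z\mapsto c\,z^{\varepsilon_k d_k}$ \emph{in model coordinates}. At that point your equation $\mu=\mu^{\varepsilon_k d_k}$ is valid---but you must check that the induced rotation angles on the various model annuli agree (they do, because the rotation number of $\phi$ on each shared boundary circle is a conjugacy invariant and propagates across the chain $D_0,A_1,D_1,A_2,\dots$). The paper carries out the identical program on the critical Fatou components $D_j$ rather than on the $A_k$: it transports $\phi$ through the uniformizations $\kappa_j$ to obtain rotational symmetries of the model maps $m_j$, and then invokes Lemma~\ref{model-aut} (which counts boundary fixed points of $m_j$, resp.\ uses invariance of the zero divisor of the annulus model) to get $\ell\mid d_1-1$, $\ell\mid d_{n+1}\pm 1$, and $\ell\mid\gcd(\delta_j\pm1,d_j+d_{j+1})$. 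So once you replace your round-circle shortcut with an honest uniformization argument, your $A_k$-route and the paper's $D_j$-route are two equivalent ways to read the same rotation-number constraint; yours has the mild advantage that in the model of $A_k$ the covering is a bare power map, whereas the paper's route packages the bookkeeping into the auxiliary lemma on $\mathrm{Rot}^{\pm}(m)$.
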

 
%
 
 To prove Lemma \ref{aut}, we need a fact  on the model maps.
 For $m\in \boldsymbol{B}^{\rm{fc}}_{D}\cup\boldsymbol{B}^{\rm{zc}}_{D}\cup\boldsymbol{A}(e,  \delta)$, let's define 
 $${\rm Rot}^{\pm}(m)=\{\theta\in S^1; m(e^{2\pi i \theta}z)=e^{{\pm}2\pi i \theta}m(z),  \forall z \}.$$
Clearly, ${\rm Rot}^+(m)$  characterizes the rotation symmetries of $m$.

  \begin{lem}\label{model-aut} 1. For any $m\in \boldsymbol{B}^{\rm{fc}}_{D}\cup\boldsymbol{B}^{\rm{zc}}_{D}$,  one has 
  $${\rm Rot}^{\pm}(m)=\{k/\ell; 0\leq k<\ell\} \text{ with }\ell|D\mp 1.$$
  
  2.  For any  $m\in \boldsymbol{A}(e,  \delta)$, one has    
  $${\rm Rot}^{\pm}(m)=\{k/\ell; 0\leq k<\ell\} \text{ with }\ell|{\rm gcd}(\delta {\pm}1, e).$$  
  \end{lem}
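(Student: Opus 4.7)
The strategy is to reduce both parts to a local computation at $z = 0$, combined with a free-action count on the zeros.

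First I observe that ${\rm Rot}^{\pm}(m)$ is a closed subgroup of $S^1 = \mathbb{R}/\mathbb{Z}$: closedness follows from continuity of $m$, and the subgroup property from composing the functional equations. A closed subgroup of $S^1$ is either finite cyclic $\{k/\ell : 0 \leq k < \ell\}$ or all of $S^1$; the latter would, upon differentiating $m(e^{2\pi i\theta}z) = e^{\pm 2\pi i\theta}m(z)$ in $\theta$ at $\theta = 0$, force $m(z) = c z^{\pm 1}$, incompatible with $m$ being a Blaschke product of degree $D \geq 2$ or an annulus-disk map of degree $e \geq 2$. Hence ${\rm Rot}^{\pm}(m)$ is finite cyclic of some order $\ell$, and the task is to bound $\ell$.

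For Part 1, I first establish $m(0) = 0$. For $m \in \boldsymbol{B}^{\rm{fc}}_D$ this is built into the definition; for $m \in \boldsymbol{B}^{\rm{zc}}_D$, setting $z = 0$ in $m(\zeta z) = \zeta^{\pm 1} m(z)$ with $\zeta = e^{2\pi i/\ell} \neq 1$ forces $m(0) = 0$. Writing the local expansion $m(z) = a z^k + O(z^{k+1})$ with $a \neq 0$ and $k \geq 1$, matching leading terms gives $\zeta^k = \zeta^{\pm 1}$, i.e.\ $\ell \mid k \mp 1$. Meanwhile $R_\zeta : z \mapsto \zeta z$ permutes the remaining zeros of $m$ in $\mathbb{D} \setminus \{0\}$ freely (its only fixed point there being $0$) and preserves multiplicities by symmetry, so the total remaining multiplicity $D - k$ is divisible by $\ell$. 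Adding, $\ell \mid (D - k) + (k \mp 1) = D \mp 1$.

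For Part 2, I invoke the Schwarz-reflected extension $\widetilde{m}: \mathbb{C}^* \to \widehat{\mathbb{C}}$ furnished by Theorem \ref{model-map}; its explicit product formula shows that $\widetilde{m}$ has a pole of order exactly $\delta$ at $0$, with leading term $\widetilde{m}(z) = c z^{-\delta} + O(z^{-\delta+1})$, $c \neq 0$. The symmetry $m(\zeta z) = \zeta^{\pm 1} m(z)$ on $\mathbb{A}_r$ propagates by analytic continuation to $\widetilde{m}$ on all of $\mathbb{C}^*$, and matching leading terms at $0$ yields $\zeta^{-\delta} = \zeta^{\pm 1}$, hence $\ell \mid \delta \pm 1$. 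Independently, since $0, \infty \notin \mathbb{A}_r$, the rotation $R_\zeta$ acts freely on the $e$ zeros of $m$ inside $\mathbb{A}_r$ and respects multiplicities, so $\ell \mid e$. Together these give $\ell \mid \gcd(e,\, \delta \pm 1)$. The main obstacle is the pole-order computation in Part 2: without the explicit formula of Theorem \ref{model-map}, one knows only that $m$ extends across $C_r$ by Schwarz reflection to some meromorphic function, but not that the pole at $0$ has order exactly $\delta$. Once this is pinned down, the divisibility statements in both parts reduce to matching leading coefficients.
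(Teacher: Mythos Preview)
Your Part~1 is correct and takes a genuinely different route from the paper. The paper counts fixed points of $m$ (respectively of $1/m$) on $\partial\mathbb{D}$: a degree-$D$ Blaschke product has exactly $D-1$ fixed points on the unit circle, and any $R_\zeta$ in ${\rm Rot}^+(m)$ must permute them, forcing $\ell\mid D-1$ in one stroke. Your argument via the Taylor order at $0$ plus a free-action count on the remaining zeros is equally valid and perhaps more portable; the paper's version is shorter but relies on the boundary fixed-point count.

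Part~2, however, has a real gap. The extension $\widetilde m=f_\infty:\mathbb{C}^*\to\widehat{\mathbb{C}}$ supplied by Theorem~\ref{model-map} does \emph{not} have a pole of order $\delta$ at $0$. Step~2 of that proof establishes the modular identity $f_\infty(r^2z)=f_\infty(z)$, so the zero set and the pole set of $f_\infty$ are each invariant under $z\mapsto r^2z$ and therefore accumulate at $0$. Concretely, in the product formula the factor $B_j$ contributes zeros at $p_kr^{2j}$ and the factor $B_{-j}$ contributes poles at $r^{2j}/\bar p_k$, both tending to $0$ as $j\to\infty$. Thus $0$ is an essential singularity of $f_\infty$, and your leading-term matching $\zeta^{-\delta}=\zeta^{\pm1}$ has no footing.

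The paper obtains $\ell\mid\delta\pm1$ by staying on the boundary: on $\partial\mathbb{D}$ the map $m$ has degree $e-\delta$, hence $e-\delta-1$ fixed points there, which $R_\zeta$ must permute; combined with your surviving free-action step $\ell\mid e$ this gives $\ell\mid\gcd(e,\,e-\delta-1)=\gcd(e,\,\delta+1)$, and the minus case is reduced to this via $\widehat m(z)=m(a/z)$. If you prefer to salvage the local-analysis flavor, a winding-number argument on $\partial\mathbb{D}$ works: lift $m|_{\partial\mathbb{D}}$ to $\psi:\mathbb{R}\to\mathbb{R}$ with $\psi(t+1)=\psi(t)+(e-\delta)$; the relation $m(\zeta z)=\zeta^{\pm1}m(z)$ forces $\psi(t+1/\ell)=\psi(t)\pm1/\ell+k$ for a fixed integer $k$, and summing $\ell$ copies yields $e-\delta=\pm1+k\ell$, hence $\ell\mid e-\delta\mp1$.
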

 
 \begin{proof} 1. Take $\theta\in {\rm Rot}^{+}(m)$, note that $r_\theta(z)=e^{2\pi i \theta}z$ maps the fixed point set of $m$ on $\partial \mathbb{D}$ 
bijectively  to itself, and there  are exactly $D-1$ fixed points on $\partial \mathbb{D}$. 
 So 
 each element $\theta\in {\rm Rot}(m)$ necessarily satisfies $(D-1)\theta\in\mathbb{Z}$. Since ${\rm Rot}(m)$ is a finite abelian group of numbers, it must take the form $\{k/\ell; 0\leq k<\ell\} \text{ with }\ell|D-1$.
 
 If  $\theta\in {\rm Rot}^{-}(m)$, then $r_\theta$ commutes with $1/m$ on $\partial \mathbb{D}$.  Note that $r_\theta$ maps the fixed point set of $1/m$ on $\partial \mathbb{D}$ 
bijectively  to itself, and $1/m$ has exactly $D+1$ fixed points on $\partial \mathbb{D}$. The rest arguments are the same as above.
 
 
 2. If $\theta\in {\rm Rot}^{+}(m)$, then by the same reason as 1,   any rotation $r$ commuting with $m$ takes the form $r(z)=e^{2\pi i k/\ell_0}z$ with  $ \ell_0| e-\delta-1$, ${\rm gcd}(k, \ell_0)=1$ and $0\leq k<\ell_0\geq 1$. The zero set $m^{-1}(0)$ is invariant under $r$ implying that $\ell_0| e$. Therefore  $\ell_0|{\rm gcd}(\delta+1, e)$.
 Since ${\rm Rot}^+(m)$ is a finite abelian group of numbers, it must take the form $\{k/\ell; 0\leq k<\ell\} \text{ with }\ell|{\rm gcd}(\delta+1, e)$.
 
  If $\theta\in {\rm Rot}^{-}(m)$, assume $m$ is defined on $\mathbb{A}_r$. Take $a\in m^{-1}(1)\cap C_r$ and
   let $\widehat{m}(z)=m(a/z)$, then $\widehat{m}\in\boldsymbol{A}(e,  e-\delta)$ and $-\theta\in {\rm Rot}^{+}(\widehat{m})$. The rest arguments are the  same as above.
   \end{proof}

   \begin{rmk}  Every set $\{k/\ell; 0\leq k<\ell\}$ with $\ell\geq 1$ and  $\ell|D\mp 1$ (first case) or $\ell|{\rm gcd}(\delta\pm 1, e)$ (second case) is realizable by ${\rm Rot}^{\pm}(m)$ for some $m$.
   \end{rmk}
 
 \vspace{5pt}
 
 \noindent {\it Proof of Proposition \ref{aut}.} 
 Assume that  ${\rm Aut}(f)$ is nontrivial.  Take   $\phi\in {\rm Aut}(f)\setminus\{id\}$. Necessarily $\phi(\{0,\infty\})=\{0,\infty\}$, and two cases happen:
  
\textbf{Case 1 (rotation):}  $\phi$ fixes $0$ and $\infty$. In this case $\phi(z)=a z,  a\neq 0,1$. The relation $\phi\circ f=f\circ \phi$ implies that  $\phi( \partial D_0)= \partial D_0$  and $a^j$ is a fixed point of $f$ on $\partial D_0$, 
 for all $j\geq 1$.
 Therefore there is a minimal integer $\ell_0>1$ so that  $\phi(z)=e^{2\pi i k_0/\ell_0}z$ with ${\rm gcd}(k_0, \ell_0)=1$.
 
 If $\sigma=I$, we choose a boundary marking $\nu$ with $\chi(\nu)=(-,+,-,+,\cdots)$. 
 Then $\phi\circ f=f\circ \phi$ implies that 
   $\boldsymbol{m}((f,\nu))=(m_0,m_\infty, m_1,\cdots,m_n)$ satisfies
 $$k_0/\ell_0\in{\rm Rot}^+(m_0)\cap {\rm Rot}^+(m_2)\cap {\rm Rot}^+(m_4)\cap \cdots,$$
 $$-k_0/\ell_0\in{\rm Rot}^-(m_\infty)\cap {\rm Rot}^+(m_1)\cap {\rm Rot}^+(m_3)\cap\cdots.$$
 
 By Lemma \ref{model-aut}, we get
 $$\ell_0|{\rm gcd}\{d_k+(-1)^k; 1\leq k\leq n+1\}.$$
 
 Similarly, for $\sigma=II$, 
 $$\ell_0|{\rm gcd}\{d_k-(-1)^k; 1\leq k\leq n+1\},$$
  and for  $\sigma=III$,
 $$\ell_0|{\rm gcd}\{d_k+(-1)^k; 1\leq k\leq n+1\}.$$
%
 
 
   
 \textbf{Case 2 (involution):}  $\phi$ interchanges $0$ and $\infty$. In this case $\phi(z)=b/z$ with $b\neq 0$. Clearly this case happens only if  $\sigma=II \text{ or } III$  (implying that $n$ is even)  and $d_k=d_{n+2-k},  \forall \ 1\leq k\leq n/2$.

\vspace{5pt}

Now let's look at ${\rm Aut}(f)$.  

 If ${\rm Aut}(f)\setminus\{id\}$ consists of rotations, then we  have ${\rm Aut}(f)\cong \mathbb{Z}_\ell$  for some $\ell|{\rm gcd}\{d_k+(-1)^k; 1\leq k\leq n+1\}$   (if $\sigma=I, III$) or 
  $\ell|{\rm gcd}\{d_k-(-1)^k; 1\leq k\leq n+1\}$
  (if $\sigma=II$).
 
  If ${\rm Aut}(f)\setminus \{id\}$ consists of involutions, then take any two maps  $\phi_1(z)=b_1/z, \phi_2(z)=b_2/z$ in   ${\rm Aut}(f)$, we have $\phi_1\circ\phi_2 \in {\rm Aut}(f)$, implying that $b_1=b_2$. So ${\rm Aut}(f)$ consists of $id$ and $\phi_1$,  hence isomorphic to $\mathbb{Z}_2=D_2$.
  
 If  ${\rm Aut}(f)\setminus \{id\}$ contains a rotation and an involution, then ${\rm Aut}(f)$ is generated by 
 an involution $z\mapsto b/z$ and a primitive rotation $z\mapsto e^{2\pi i k/\ell}z$. In this case, ${\rm Aut}(f)$ is isomorphic to the dihedral group $D_{2\ell}$.
\hfill\fbox

%
%
%
%
%
%
%
%

  \begin{cor}\label{cover}  Let $\boldsymbol{d}^*=(d_{n+1},\cdots,d_{1})$.  Assume that
  
  ${\rm gcd}\{d_k+(-1)^k; 1\leq k\leq n+1\}=1$ (when $\sigma=I$);
  
   ${\rm gcd}\{d_k-(-1)^k; 1\leq k\leq n+1\}=1$ and $\boldsymbol{d}\neq \boldsymbol{d}^*$ (when $\sigma=II$);
  
   ${\rm gcd}\{d_k+(-1)^k; 1\leq k\leq n+1\}=1$ and $\boldsymbol{d}\neq \boldsymbol{d}^*$  (when $\sigma=III$).

\noindent Then $\mathcal{H}$ is finitely covered by $\mathbb{R}^{4d-4-n}\times \mathbb{T}^n$.
 \end{cor}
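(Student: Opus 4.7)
The strategy is to show that, under the stated hypotheses, the finite-to-one quotient map $\boldsymbol{q}:\mathbb{R}^{4d-4-n}\times\mathbb{T}^n\to\mathcal{H}$ supplied by Theorem \ref{glob-comp} is actually a (necessarily finite) covering map. Recall from Section \ref{global-hyp} that $\boldsymbol{q}=\boldsymbol{p}\circ\Psi^{-1}$, where $\Psi:\mathcal{F}\to\mathbb{R}^{4d-4-n}\times\mathbb{T}^n$ is the homeomorphism produced by Theorem \ref{marked-comp}, and $\boldsymbol{p}=\boldsymbol{p}_2\circ\boldsymbol{p}_1$ factors through $\mathcal{E}$ with $\boldsymbol{p}_1:\mathcal{F}\to\mathcal{E}$ already an honest finite covering (Lemma \ref{cov1}) and $\boldsymbol{p}_2:\mathcal{E}\to\mathcal{H}$ a finite, possibly \emph{branched}, cover (Lemma \ref{bcov2}). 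The whole plan therefore reduces to verifying that the branch locus of $\boldsymbol{p}_2$ is empty.

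The first step is to recall from the proof of Lemma \ref{bcov2} that
$$\mathcal{B}(\boldsymbol{p}_2)=\{f\in\mathcal{E};\ \mathrm{Aut}(f)\text{ is nontrivial}\},$$
so it suffices to prove $\mathrm{Aut}(f)=\{\mathrm{id}\}$ for every $f\in\mathcal{E}$. I would then invoke Proposition \ref{aut} type by type. For $\sigma=I$, the proposition gives $\mathrm{Aut}(f)\cong\mathbb{Z}_\ell$ with $\ell\mid\gcd\{d_k+(-1)^k\}$, which is $1$ by hypothesis; hence triviality is immediate. For $\sigma=II$ and $\sigma=III$, the corresponding gcd hypothesis likewise forces the cyclic parameter $\ell$ to equal $1$, which kills all rotational symmetries. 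What remains in those two types is the possible dihedral summand $D_2\cong\mathbb{Z}_2$ arising from an involution $\phi(z)=b/z$ swapping $0$ and $\infty$.

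To dispose of the involution I would look back at Case~2 in the proof of Proposition \ref{aut}, which shows that any such $\phi\in\mathrm{Aut}(f)$ forces the mapping scheme to be centrally symmetric, and in particular $d_k=d_{n+2-k}$ for all $1\leq k\leq n/2$, equivalently $\boldsymbol{d}=\boldsymbol{d}^*$. The assumption $\boldsymbol{d}\neq\boldsymbol{d}^*$ is precisely the condition ruling this out, so no involution can exist and $\mathrm{Aut}(f)$ is trivial in Types II and III as well.

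Once $\mathcal{B}(\boldsymbol{p}_2)=\emptyset$ is in hand, $\boldsymbol{p}_2$ is a finite covering, the composition $\boldsymbol{p}=\boldsymbol{p}_2\circ\boldsymbol{p}_1$ is a finite covering, and pre-composition with $\Psi^{-1}$ yields the desired finite covering $\mathbb{R}^{4d-4-n}\times\mathbb{T}^n\to\mathcal{H}$. I do not expect a genuine obstacle here; the corollary is essentially a bookkeeping consequence of Proposition \ref{aut} combined with the decomposition of $\boldsymbol{p}$ in Section \ref{global-hyp}. The only subtle spot is the dihedral alternative in Types II and III: the gcd hypothesis alone does \emph{not} suffice there, and one must remember that the symmetry-breaking hypothesis $\boldsymbol{d}\neq\boldsymbol{d}^*$ is exactly the combinatorial obstruction to involutive automorphisms exposed by Case 2 of Proposition \ref{aut}.
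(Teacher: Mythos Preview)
Your proposal is correct and follows exactly the paper's own argument: use the factorization $\boldsymbol{q}=\boldsymbol{p}_2\circ\boldsymbol{p}_1\circ\Psi^{-1}$, invoke Proposition \ref{aut} (together with the $\boldsymbol{d}\neq\boldsymbol{d}^*$ hypothesis to exclude involutions in Types II and III) to force $\mathrm{Aut}(f)$ trivial for all $f\in\mathcal{E}$, and conclude that $\boldsymbol{p}_2$ is unbranched. Your write-up is in fact more explicit than the paper's, which compresses the whole argument into the single sentence ``the arithmetic condition implies that $\mathrm{Aut}(f)$ is trivial.''
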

 \begin{proof}  Recall the definitions of $\boldsymbol{p}_1:\mathcal{F}\rightarrow\mathcal{E}$ and $\boldsymbol{p}_2: \mathcal{E}\rightarrow \mathcal{H}$ in the previous section.  By Proposition \ref{aut}, the arithmetic condition implies that  ${\rm Aut}(f)$ is trivial for each map $f\in  \mathcal{E}$.  Therefore
 $\boldsymbol{p}_2: \mathcal{E}\rightarrow \mathcal{H}$ is a finite-to-one covering map. Consequently, the map 
 $\boldsymbol{q}=\boldsymbol{p}\circ \Psi^{-1}: \mathbb{R}^{4d-4-n}\times \mathbb{T}^n \rightarrow \mathcal{H}$
 defined in the proof of Theorem \ref{glob-comp} is a finite-to-one covering map.
 \end{proof}
 
 Under the assumption as in Corollary \ref{cover}, we are facing the problem: what a space finitely covered by  $\mathbb{R}^{4d-4-n}\times \mathbb{T}^n$ can be?
 If we forget the dynamical setting and  simply  consider the problem from the topological viewpoint,  the classification of spaces finitely covered by  $\mathbb{T}^n$ (a factor of $\mathbb{R}^{4d-4-n}\times \mathbb{T}^n$) is already an important subject in topology,  related to the classifications of  crystallographic  groups and dating back more than 100 years ago to David Hilbert \cite{M2}. There are numerous references on the subject, e.g.   \cite{B1,B2,Ch,F,FG,FH} and an accessible one is \cite{Hi}. We would mention an interesting example in Section \ref{hatcher-eg}.
    
    The classification of spaces finitely covered by  $\mathbb{R}^{4d-4-n}\times \mathbb{T}^n$ is more delicate than that by $\mathbb{T}^n$, even in the case $n=1$. For example, a topological space finitely covered by $\mathbb{T}^1=S^1$ is nothing but $S^1$, while  a topological space finitely covered by $\mathbb{R}^1\times\mathbb{T}^1$ can be 
    a M\"obius band or $\mathbb{R}^1\times\mathbb{T}^1$.

\subsection{Topological space finitely covered by $\mathbb{T}^n$} \label{hatcher-eg}

  In dimension $n=1$ or  2, if an orientable topological space $X$ is finitely covered by $\mathbb{T}^n$, then $X$ is necessarily homeomorphic to $\mathbb{T}^n$. However, in dimension 3, things are different. The following example is provided to us by Allen Hatcher.

  Let $f:\mathbb{T}^2 \rightarrow\mathbb{T}^2$ be the map that is a reflection of each circle factor of $\mathbb{T}^2$.  One can realize $f$ as a certain 180 degree rotation of the torus.  Now let $X$ be the quotient space of $\mathbb{T}^2\times [0,1]$ with the points $(x,0)$ and $(f(x),1)$ identified.  Then $X$ has a 2-sheeted covering space which is the quotient space of $\mathbb{T}^2\times [0,2]$ with $(x,0)$ identified with $(f^2(x),2)$ and this is $\mathbb{T}^3$ since $f^2$ is the identity map.  The covering transformation group is  cyclic of order 2.  The space $X$ is not homeomorphic to $\mathbb{T}^3$ since its fundamental group $\pi_1(X)$ is nonabelian, with presentation
   $$\langle a,b,c \ | \ ab=ba, \ cac^{-1}=a^{-1}, \ cbc^{-1}=b^{-1} \rangle.$$
This group is isomorphic to the  semidirect product 
$$\mathbb{Z}^2\rtimes_\phi \mathbb{Z}$$
where the $\mathbb{Z}$-action $\phi$ on $\mathbb{Z}^2$ is induced by
$$\phi(1)\cdot(1,0)=(-1,0), \ \phi(1)\cdot(0,1)=(0,-1).$$

%
%
%
%
%
%
%
%
%

\subsection{Crystallographic group}  We only give a quick introduction to crystallographic groups, more interesting stuff can be found in \cite{Hi}.
Let $k\in\mathbb{N}$ and ${\rm Isom}(\mathbb{R}^k)$ be the group of all isometries of $\mathbb{R}^k$. It is known that  each element $\phi\in {\rm Isom}(\mathbb{R}^k)$ can be written uniquely as $\phi=T_{\boldsymbol{v}}\circ U$, where $T_{\boldsymbol{v}}(\boldsymbol{x})=\boldsymbol{x}+\boldsymbol{v}$ is a translation and $U$ is an element of the orthogonal group ${\rm O}(k)=\{A; AA^t=I\}$.
A discrete subgroup $G$ of ${\rm Isom}(\mathbb{R}^k)$  is called a $k$-dimensional {\it crystallographic group} (or  {\it space group}) if the quotient space $\mathbb{R}^k/G$ is compact. A purely group-theoretic characterization of  crystallographic group due to Zassenhaus \cite{Z} states that an abstract group $G$ is isomorphic to a $k$-dimensional crystallographic group if and only if $G$ contains an finite index, normal, free-abelian subgroup of rank $k$, that is also maximal abelian.

Under the assumption as in Corollary \ref{cover}, the quotient map $\boldsymbol{q}:\mathbb{R}^{4d-4-n}\times \mathbb{T}^n\rightarrow\mathcal{H}$
 is a finite-to-one covering map.
It follows that $\boldsymbol{q}_*(\pi_1(\mathbb{R}^{4d-4-n}\times\mathbb{T}^n))$ is a free abelian subgroup of rank $n$, 
having finite index in $\pi_1(\mathcal{H})$.
 If $\boldsymbol{q}_*(\pi_1(\mathbb{R}^{4d-4-n}\times\mathbb{T}^n))$ is normal and maximal abelian (yet to be characterized from the dynamical aspects), then $\pi_1(\mathcal{H})$ is a crystallographic group. This is the point where the global topology of hyperbolic components in the Cantor circle locus is related to the  crystallographic group.

\end{document}